\newcommand{\R}{\mathbb{R}}
\newcommand{\beq}{\begin{equation}}
\newcommand{\eeq}{\end{equation}}
\newcommand{\bepa}{\left\{ \begin{array}{l}}
\newcommand{\eepa} {\end{array}\right.}
\newcommand{\f}{\frac}
\newcommand{\p}{\partial}
\newcommand{\eps}{\epsilon}
\newcommand{\calC}{\mathcal{C}}
\newcommand{\calM}{\mathcal{M}}
\newcommand{\calP}{\mathcal{P}}
\newcommand{\ds}{\displaystyle}
\newcommand {\supp}{{\mathrm supp}}
\newcommand{\buK}{\mathbf{\underline{K}}}
\newcommand{\ddt}{\ds\f{d}{dt}}
\theoremstyle{plain}
\newtheorem{lemma}{Lemma}
\newtheorem{theorem}{Theorem}
\newtheorem{proposition}{Proposition}
\newtheorem{corollary}{Corollary}
\title{Selection-mutation dynamics with asymmetrical reproduction~kernels}
\author{
Beno\^ \i t Perthame\thanks{Sorbonne Universit\'e, CNRS, Universit\'e de
Paris, Inria, Laboratoire Jacques-Louis Lions, F-75005 Paris, France.}  
\thanks{B.P. has received funding from the European Research Council (ERC) under the European Union's Horizon 2020 research and innovation programme (grant agreement No 740623). }
\footnotemark[5] 
\and Martin Strugarek 
\thanks{AgroParisTech, 16 rue Claude Bernard, 75231 Paris Cedex 05,  France}
\footnotemark[1] \footnotemark[5]
\and
C\'ecile Taing\thanks{Laboratoire de Math\'ematiques et Applications, Universit\'e
de Poitiers, CNRS, F-86073 Poitiers, France.} \thanks{Emails: benoit.perthame@sorbonne-universite.fr,
 martin.strugarek@agriculture.gouv.fr, cecile.taing@math.univ-poitiers.fr}
}
\begin{document}
\maketitle

\begin{abstract}
 We study a family of selection-mutation models of a sexual population structured by a phenotypical trait. The main feature of these models is the asymmetric trait heredity or fecundity between the parents : we assume that each individual inherits mostly its trait from the female or that the trait acts on the female fecundity but does not affect male.   Following previous works inspired from principles of adaptive dynamics, we rescale time and assume that mutations have limited effects on the phenotype. Our goal is to study the asymptotic behavior of the population distribution. We derive non-extinction conditions and BV estimates on the total population. We also obtain Lipschitz estimates on the solutions of Hamilton-Jacobi equations that arise from the study of the population distribution concentration at fittest traits. Concentration results are obtained in some special cases by using a Lyapunov functional.
\end{abstract}

\noindent{\makebox[1in]\hrulefill}\newline
2010 \textit{Mathematics Subject Classification.}  35F21, 35B40, 35Q92, 45K05
\newline\textit{Keywords and phrases.} Integro-differential equations; Asymptotic analysis; Adaptive dynamics; Population biology;


\section{Introduction}

We study mathematically a family of models of selection-mutation for sexual populations structured 
with a continuous phenotype, which we call "trait" and denote by $x \in \R$, and we present 
different methods that apply to some specific cases. 
All models studied in the present 
paper are derived from the general form
\begin{equation}
\left\{
\begin{aligned}
& \eps\p_t n_\eps (t, x) = \frac{1}{\rho_\eps(t)}  \iint_{\R^2} K_\eps(x, y, z) n_\eps(t, y) n_\eps(t, z) dy \,dz - R (x, \rho_\eps(t)) n_\eps(t, x),
\\
& \rho_\eps(t) = \int_{\R} n_\eps(t, x) dx, \quad n_{\eps} (0, x) = n^0_{\eps} (x).
\end{aligned}
\right.
\label{eq:resgeneralform}
\end{equation}
The variable $t$ stands for time, $n_\eps(t, x) \in [0, +\infty)$ 
is the population number density at time~$t$ and with trait~$x$, and 
$\rho_{\eps} (t)$ is the total population. 
The positive function~$R$ represents the saturation term and comprises 
intrinsic mortality and the effects of competition through 
the nonlocal term $\rho_\eps$.
Indeed, we assume that all individuals compete 
for survival because they share the same resources, which
implies the boundedness of the total population. In this framework,
the integral term $\rho_\eps$ is refered as the competition term and $R$ is increasing 
with respect to this quantity.
Since we consider sexual population, the major feature of the
equations under study is to yield nonlinear and nonlocal 
birth terms with a quadratic aspect, though $1$-homogeneous. 
In equation \eqref{eq:resgeneralform}, we interpret~$y$ (the second 
argument for~$K_\eps$) as the female trait, and~$z$ (the third argument) as the male trait. 
Thus $x \mapsto K_\eps(x,y,z)$ is equal to the distribution of 
individuals that are born from any encounter between a female of trait $y$ 
and a male of trait~$z$, per unit of time.
Of course, this model is valid only assuming that the sex ratio is constant 
in time and independent of the trait. We make this simplification in order 
to obtain a single equation rather than a system.

Our specific motivation comes from insecticide resistance. 
This phenomenon has been observed among insects of interest 
for human health, in particular in species of mosquitoes that are 
vectors for diseases like dengue (in the {\itshape Aedes} genus) or malaria (in the {\itshape Anopheles} genus). 
For this specific problem of selection-mutation, the trait variable should contain, for instance, 
the expression level for the {\itshape kdr} gene ({\itshape knock-down resistance}, 
see~\cite{PasRay.1996}). The present study is part of a more general program 
on the analysis of models, and their control, in the context of evolutionary 
epidemiology (see~\cite{NadStrVau.Hindrances,SVZ} and the references therein).

Because of this motivation, our models have a sexual reproduction kernel. 
This is not the case in similar selection-mutation models developed for bacteria or resistance to 
treatment in cancer (see, e.g., \cite{Raoul_2011, Lorzetal2015}), where the reproduction is clonal. 
The same kind of kernels arise in various biological
problems, as cell alignment~\cite{DFR} or protein exchanges \cite{Biletal2016, MagRao2015},
and in more realistic models of trait-structured sexual populations (see \cite{Tufto2000, TurBar1994} for 
some examples in numerical frameworks).

\bigskip

The main results of this paper concern the behavior of~$\rho_\eps$ 
and~$n_\eps$ in the asymptotic of large time scale and mutations 
with limited effect on the phenotype. Because the general equation~\eqref{eq:resgeneralform} 
is out of reach with the methods we use here, inherited from 
asexual reproduction, we consider two particular  
classes of reproduction kernels in equation~\eqref{eq:resgeneralform}. 
They share the common property of an asymmetric structure which is biologically relevant.
Indeed, back to the insecticide resistance 
modeling, it has been observed that good resistance levels also
result in a high fitness cost, and especially on the fecundity. We simply assume
here that either fecundity is female-trait-dependent, or that new individuals
inherit mostly their trait from the female. Since
female mosquitoes have a longer lifespan than male ones, they will be more susceptible to 
be affected by insecticides and to become resistant. Also, females will perform 
several ovipositions during their lives, suggesting a higher impact of acquired resistance
on female fecundity.
\\

We consider a first class of models, with 
{\bfseries asymmetric fecundity} (AF in short),
\begin{equation*}\label{asfecundity}
\eps\p_t n_\eps (t, x) = \frac{1}{\rho_\eps(t)} \iint_{\R^2} B(y)\alpha_\eps(x,y,z) n_\eps(t, y) n_\eps(t, z) dy\, dz - R(x, \rho_\eps(t)) n_\eps(t, x),
\tag{AF}
\end{equation*}
where $B$ is a positive function and represents the crossing fecundity, 
which is assumed to depend only on female's trait, 
and $\alpha_\eps(\cdot, y, z)$ is the probability distribution 
of the offspring from a $y$ female and a $z$ male. Then the reproduction
kernel reads
\begin{equation}\label{af:hyp}
K_{\eps}(x, y, z) = B(y) \alpha_{\eps}(x, y, z), \quad \text{with }\int_\R \alpha_{\eps}(x, y, z) \,dx = 1 \text{ for all } y, z \in \R.
\end{equation}

The second class of models features an {\bfseries asymmetric trait heredity} (ATH in short), which reads
\begin{equation*}\label{astrait}
\eps\p_t n_\eps (t, x) = \frac{1}{\rho_\eps(t)} \iint_{\R^2} K_0(x-z)G_\eps(x-y) n_\eps(t, y) n_\eps(t, z) dy\, dz - R(x, \rho_\eps(t)) n_\eps(t, x),
\tag{ATH}
\end{equation*}
where
\begin{equation}\label{ath:hyp}
K_\eps(x,y,z)=K_0(x-z)G_\eps(x-y),
\end{equation}
with $K_0$ a positive function, and $G_\eps$ the rescaling of a positive 
function~$G$ by letting
\begin{equation*}
G_\eps(x-z)=\frac{1}{\eps} G\left( \frac{x-z}{\eps}\right), \quad \text{with} \quad\int_\R G(z) dz=1.
\end{equation*}
We can write the ATH equation under the following form
\begin{equation*}
\eps\p_t n_\eps (t, x) = \frac{1}{\rho_\eps(t)} \left[K_0 \ast n_\eps(t, \cdot) \, G_\eps \ast n_\eps(t, \cdot)\right] (x)   - R(x, \rho_\eps(t)) n_\eps(t, x).
\end{equation*}

For some particular forms of these two classes of models, we use three ingredients, inspired from methods
used in asexual population models, to 
state convergence results. Firstly, we derive 
some Bounded Variation 
($BV$ in short) estimates for~$\rho_\eps$. Secondly,
we prove concentration of the population by a Lyapunov stability property.
Finally,we identify 
a consistent limit object as $\eps \to 0$, which is a constrained 
Hamilton-Jacobi equation, and we obtain compactness estimates 
on the solutions at the $\eps$-level in order to be able to extract 
converging subsequences and to use the stability property of viscosity 
solutions. 
\\

To better show the technical ideas and highlight 
the new arguments, we begin with studying two simplified 
models which are  particular cases of the two classes presented 
above. The simplifications are that
\\
$\bullet$ we ignore mutations, therefore  the dynamics 
is simply generated by adaptation and competition between 
pre-existing traits,
\\
$\bullet$ we assume that the saturation function~$R$ 
does not depend on the trait variable and is such that 
$R (x, \rho) \equiv \nu \rho$, with $\nu>0$ which allows 
for some  specific algebraic manipulations.

The model with no mutations reads
\begin{equation*}\label{nomutation}
\eps\p_t n_\eps (t, x) = \left( \frac{1}{\rho_\eps(t)} K_0 \ast n_\eps(t, \cdot) (x) - \nu \rho_\eps(t) \right) n_\eps(t, x),
\tag{nM}
\end{equation*}
This equation can be written under the form of equation~\eqref{asfecundity} with
\begin{equation*}
B \equiv \int_\R K_0(z) \, dz, \text{ and } \alpha_\eps(x,y,z)=\frac{1}{B}K_0(x-z)\delta_0(x-y),
\end{equation*}
and also under the form of \eqref{astrait} with
\begin{equation*}
G_\eps=\delta_0.
\end{equation*}
Because it is very specific, we also introduce a generalization of \eqref{nomutation}  and consider more general birth and competiton
\begin{equation*}
 \eps \p_t n_{\eps} (t, x) = \Big( \f{1}{\rho_{\eps}(t)} \int K_S(x,y) n_{\eps} (t,y) dy - \big( R_0 (x) + R_1 (\rho_{\eps}) \big) \Big) n_{\eps}(t,x), 
 \label{eq:gennLyapunov}
 \tag{gnM}
\end{equation*}
for some symmetric kernel $K_S : \R^2 \to \R_+$.  With this generality, we  show in Section~\ref{ch3:subs:Lyapunov} how to built a Lyapunov convergence results for \eqref{eq:gennLyapunov} based on tools from game theory. The method is based on the reduction to some kind of replicator equation for the quantity $q(t,x) :=\frac{ n_{\eps} (\eps t, x)}{\rho_{\eps}(\eps t)},$ which satisfies another equation in closed form.

\bigskip

The relationships between sexual reproduction and selection are not well understood. 
Models of sexual reproduction have already been discussed in different contexts. 
Studies of individual-based models of sexual population were performed to determine 
the necessary conditions to evolutionary branching in \cite{DieckDoe1999, KisGer1999, DooDieck2006}, 
with a structure in genetic types (see \cite{bulmer, burger} for a review of 
mathematical models of population genetics).
In \cite{ColMelMet2013} for instance, the authors investigate a stochastic birth 
and death process model for sexually reproducing diploids with Lotka-Volterra type 
dynamics and single locus genetics. At the small mutation steps limit, they derive 
a differential equation in allele space, referred to as a form of the canonical 
equation of the adaptive dynamics. In~\cite{Coretal2018}, another stochastic 
birth and death process model is studied with sexual reproduction according to 
mating preferences and a space structure with patches. In this case, reproductive 
isolation between patches occurs, and the authors prove that the time needed for 
this isolation to occur is a function of the population size. In the framework 
of insecticide resistance, a deterministic system with three genotypes (two alleles 
at a single locus) was studied in~\cite{SchSou2015}, with a focus on the ``reversal time'' 
that is a measure of the persistence of resistance in a population after exposition to insecticide.

From a large population point of view, in~\cite{MirRao} the authors considered 
sexual populations structured by a trait and a space variable 
in a non-homogeneous environment, and after performing an 
asymptotic limit and a simplification of the model, they derived 
an estimate of the invasion speed or extinction speed of the 
population. In \cite{Bourgeron2017}, the authors study the same 
kind of models as in the present paper, where the traits of 
the newborns are distributed through a gaussian kernel centered 
on the mean of the parents' traits and with a constant variance, 
as in \cite{Doeetal2007}, which is the so-called infinitesimal 
model. They prove the existence of principal 
eigenelements for the corresponding eigenproblem, using the 
Schauder fixed point theorem. This work has been extended in 
\cite{calvez2018asymptotic} with the study of the same stationary
problem at the asymptotic of vanishing variance. In the same regime,
the associated Cauchy problem has then been investigated in \cite{patout2020cauchy},
showing that solutions can be approximated by Gaussian profiles with
small variance.

\bigskip

The paper is organized as follows. In Section~\ref{ch3:mainresults}, we state 
our assumptions and results. We also establish some 
non-extinction conditions and bounds on the total population. In 
Section~\ref{sub:simplecase}, we focus on the models without mutations \eqref{nomutation}-\eqref{eq:gennLyapunov} in order 
to introduce the main arguments that will be used for the more general cases. In 
particular, we derive $BV$ estimates for the total population, and prove a Lyapunov 
stability result for the population distribution. In Section~\ref{section:mutations}, we 
address the derivation of $BV$ estimates for the \eqref{astrait} and \eqref{asfecundity} models when $R$ only 
depends on the total population variable and we explain the difficulties 
encountered when $R$ is generic. Also, discuss the settings 
of the Lyapunov method applied to these mutation models. 
Finally in Section~\ref{ch3:HJ}, we deal with the 
Hamilton-Jacobi approach. To conclude we identify some difficulties 
raised by the application of our methods to the general case 
of~\eqref{eq:resgeneralform} and other possible approaches.

\section{Main results} \label{ch3:mainresults}

 In order to introduce our main results, we need several assumptions where we use the following notations. 
 \\
We denote by $\calM_+^1 (\R)$ the set of probability measures on $\R$, by $\calM_+ (\R)$ the set of finite mass nonnegative measures and by $\mathcal{C}_b (\R, \R_+)$ the space of continuous and bounded functions on $\R$ with values in $\R_+$. Also, for $a \in \R$, we use the notation $a_- = \max(-a, 0)$.

\subsection{Assumptions and statements}

The initial data is denoted by $ n_{\eps}^0 (x) $ and, to prove a $BV$ bound on  $\rho_{\eps}$, which plays a fundamental role hereafter, the inital total density is  usually assumed to satisfy
\begin{equation}\label{assum:rho_0}
\eps (\dot{\rho}_{\eps})_-(0)  \quad \text{ is uniformly bounded}.
\end{equation}
We take the value $ (\dot{\rho}_{\eps})_-(0)$ from the equation under consideration. For instance, for the model~\eqref{nomutation}, it is defined by 
\[
\eps \dot{\rho}_{\eps}(0)  = \int n_{\eps}^0 (x) 
\frac{K_0 \ast n_{\eps}^0}{\rho_{\eps}^0} (x)\, dx - \nu (\rho_{\eps}^0)^2  .
\]

The function $R$ stands for the death rate and the competition effects. 
We make the standard assumption that it increases with the total population:
\begin{equation}
\forall x, \rho, \quad \p_{\rho} R(x, \rho) > 0.
\label{hyp:Rinc}
\end{equation}
%
\medskip
For models with no mutations \eqref{nomutation} and asymmetric trait heredity \eqref{astrait}, we assume
\begin{equation}\label{assum:K0}
K_0 \in \mathcal{C}_b (\R, \R_+) \text{ is an even kernel}, \quad K(z)=K(-z).
\end{equation}
The symmetry is not always needed but the positivity of the symmetric part of the kernel  is fundamental, this is why we underline this property. Continuity is needed because the kernel acts on measures in the limit when $\eps$ vanishes.

\medskip

For  equation~\eqref{nomutation}, we state the following theorem which is proved in Section~\ref{sec:BVsimple}.

\begin{theorem}[$BV$ bound for model \eqref{nomutation}]\label{th:simple}
We assume \eqref{assum:K0} and let $n_{\eps}$ be the solution of~\eqref{nomutation} with an initial data $n^0_{\eps}$ satisfying  \eqref{assum:rho_0}.

Then, for all $T>0$, $\rho_\eps$ is uniformly bounded in $BV(0, T)$. Namely, 
we obtain
\begin{equation}
    \int_0^T \lvert \dot{\rho}_{\eps} (t) \rvert \,dt \leq \rho_M + \frac{2 \eps}{\kappa''_m} (\dot{\rho}_{\eps})_-(0)
, \quad  \int^T_0 \int_\R n_\eps \big( \frac{K_0 \ast n_\eps}{\rho_\eps} - \nu\rho_\eps \big)^2  dx\,dt=  O(\eps), 
 \label{eq:concentration}    
\end{equation}
with $\rho_M$ and $\kappa''_m$ defined later on. This implies that, up to extraction of subsequences, there exist limits $\rho_{\eps} \to \rho$ in $L^1(0,T)$, and $n_{\eps} \xrightharpoonup{} n  \in L^{\infty}_t (0,T;\calM_+ (\R))$ in the sense of measures.
\end{theorem}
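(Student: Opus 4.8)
The plan is to exploit the special form $R\equiv\nu\rho$ to reduce everything to a closed differential inequality for the single scalar $J_\eps(t):=\eps\dot\rho_\eps(t)$. First I would integrate \eqref{nomutation} in $x$ and use the evenness of $K_0$ to write
\[
\eps\dot\rho_\eps(t)=I_\eps(t)-\nu\rho_\eps(t)^2,\qquad I_\eps(t):=\frac{1}{\rho_\eps(t)}\iint_{\R^2}K_0(x-y)\,n_\eps(t,x)\,n_\eps(t,y)\,dx\,dy\ \ge\ 0 .
\]
Since $0\le I_\eps\le\|K_0\|_\infty\rho_\eps$, this yields the upper bound $\rho_\eps(t)\le\rho_M:=\max(\rho_\eps^0,\|K_0\|_\infty/\nu)$ and also $\eps|\dot\rho_\eps|\le\|K_0\|_\infty\rho_M+\nu\rho_M^2=:C$. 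Because \eqref{nomutation} is, for each fixed $x$, a scalar linear ODE in $t$, it preserves $\supp n_\eps^0$, so by continuity and positivity of $K_0$ one has $I_\eps\ge\kappa_m\rho_\eps$ on that support; this gives a non-extinction bound $\rho_\eps(t)\ge\rho_m>0$ (the type of bound established among the preliminary results of Section~\ref{ch3:mainresults}), and I set $\kappa''_m:=\nu\rho_m$.

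Writing $r_\eps:=\frac{K_0\ast n_\eps}{\rho_\eps}-\nu\rho_\eps$, so that $\eps\partial_t n_\eps=r_\eps n_\eps$ and $J_\eps=\int_\R r_\eps n_\eps\,dx$, I would then differentiate $J_\eps$ in $t$, substitute $\eps\partial_t n_\eps=r_\eps n_\eps$, and symmetrize the resulting nonlocal term $\int_\R\big(K_0\ast(r_\eps n_\eps)\big)n_\eps\,dx$ again via $K_0(z)=K_0(-z)$; after cancellations this produces the key identity
\[
\eps\dot J_\eps+\nu\rho_\eps J_\eps+\frac{J_\eps^2}{\rho_\eps}=2\int_\R r_\eps^2\,n_\eps\,dx .
\]
By Cauchy--Schwarz, $J_\eps^2=\big(\int_\R r_\eps n_\eps\,dx\big)^2\le\rho_\eps\int_\R r_\eps^2 n_\eps\,dx$, so the right-hand side dominates $2J_\eps^2/\rho_\eps$, whence $\eps\dot J_\eps+\nu\rho_\eps J_\eps\ge J_\eps^2/\rho_\eps\ge0$.

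From here the $BV$ bound is nearly immediate: on $\{J_\eps<0\}$ the last inequality forces $\frac{d}{dt}(J_\eps)_-\le-\frac{\nu\rho_\eps}{\eps}(J_\eps)_-\le-\frac{\kappa''_m}{\eps}(J_\eps)_-$, so Gronwall gives $(J_\eps)_-(t)\le(J_\eps)_-(0)e^{-\kappa''_m t/\eps}$ and $\int_0^T(\dot\rho_\eps)_-\,dt=\tfrac1\eps\int_0^T(J_\eps)_-\,dt\le\frac{(J_\eps)_-(0)}{\kappa''_m}=\frac{\eps(\dot\rho_\eps)_-(0)}{\kappa''_m}$; combined with $\int_0^T|\dot\rho_\eps|=\rho_\eps(T)-\rho_\eps(0)+2\int_0^T(\dot\rho_\eps)_-$ and $\rho_\eps(T)\le\rho_M$ this is the first estimate in \eqref{eq:concentration}, uniform by \eqref{assum:rho_0}. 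For the second estimate I would integrate the key identity over $(0,T)$ and read off $2\int_0^T\!\!\int_\R r_\eps^2 n_\eps\,dx\,dt=\eps\big(J_\eps(T)-J_\eps(0)\big)+\tfrac{\nu\eps}{2}\big(\rho_\eps(T)^2-\rho_\eps(0)^2\big)+\eps^2\int_0^T\frac{\dot\rho_\eps^2}{\rho_\eps}\,dt$; the first two terms are $O(\eps)$ by $\rho_\eps\le\rho_M$ and $|\dot\rho_\eps|\le C/\eps$, and the last is $\le\frac{\eps^2}{\rho_m}\|\dot\rho_\eps\|_{L^\infty(0,T)}\int_0^T|\dot\rho_\eps|=O(\eps)$ since $\|\dot\rho_\eps\|_{L^\infty}=O(1/\eps)$ and $\int_0^T|\dot\rho_\eps|=O(1)$ by the $BV$ bound just obtained. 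Finally, $\rho_\eps$ bounded in $BV(0,T)$ gives, via Helly's selection theorem (equivalently the compact embedding $BV\hookrightarrow L^1$), a subsequence converging in $L^1(0,T)$ to some $\rho$; and $\|n_\eps(t,\cdot)\|_{\calM_+(\R)}=\rho_\eps(t)\le\rho_M$ shows $n_\eps$ is bounded in $L^\infty(0,T;\calM_+(\R))$, so a further subsequence converges weakly-$\ast$ to a measure $n$.

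I expect the main obstacle to be finding and verifying the key identity of the second paragraph: it is exactly where the structure $R\equiv\nu\rho$ is essential, it requires using the symmetry of $K_0$ a second, less obvious time in order to recognize the cross term as $\int_\R r_\eps^2 n_\eps\,dx+\nu\rho_\eps J_\eps$, and the decisive point is noticing that the a priori ``bad'' Riccati term $J_\eps^2/\rho_\eps$ is absorbed by the manifestly nonnegative right-hand side. The only other genuinely needed input, the non-extinction lower bound $\rho_\eps\ge\rho_m$ that fixes $\kappa''_m$ and reappears in the $O(\eps)$ estimate, is already available from the preliminary bounds of Section~\ref{ch3:mainresults}.
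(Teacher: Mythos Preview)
Your proof is correct and follows essentially the same route as the paper. Your key identity
\[
\eps\dot J_\eps+\nu\rho_\eps J_\eps+\frac{J_\eps^2}{\rho_\eps}=2\int_\R r_\eps^2\,n_\eps\,dx
\]
is exactly the paper's equation~\eqref{eq:simpleddrho} rewritten in the variable $J_\eps=\eps\dot\rho_\eps$ (just use $\int n_\eps\,K_0\ast n_\eps=\rho_\eps J_\eps+\nu\rho_\eps^3$ to translate). From there both arguments control $(\dot\rho_\eps)_-$ by an exponential and integrate the identity for the $O(\eps)$ bound.

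Two small differences worth noting. First, the paper does not use Cauchy--Schwarz: it simply drops the nonnegative term $\int n_\eps r_\eps^2$ and uses directly that $b_\eps:=\rho_\eps^{-2}\int n_\eps\,K_0\ast n_\eps\ge\kappa''_m$, which is precisely the \emph{definition} of $\kappa''_m$ in~\eqref{eq:kappa2m}; this avoids invoking the non-extinction bound $\rho_\eps\ge\rho_m$ at that step. Your route (Cauchy--Schwarz, then $\nu\rho_\eps\ge\nu\rho_m$) lands on the same constant because, in this model, Proposition~\ref{prop:nonext3} gives $\rho_m=\kappa''_m/\nu$, so your $\nu\rho_m$ \emph{is} the paper's $\kappa''_m$. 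Second, your support-preservation argument for non-extinction only works for compactly supported initial data; the general case is handled by the preliminary results you cite, so deferring there is the right call.
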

The first bound in~\eqref{eq:concentration} gives compactness in $L^1_{\rm loc}$ for $\rho_{\eps} (t)$, which is useful for nonlinear terms. Formally, the second bound in~\eqref{eq:concentration}   means that the support on the limit measure $n$ is supported by the points $\bar x(t)$ where $K_0 \ast n(t,x)= \nu \rho(t)$ which are expected to be discrete (if not unique for all $t$). This question is studied in Section~\ref{sec:CDM}.  

\medskip

For the model with asymmetric fecundity \eqref{asfecundity}, we need 
the following assumption on $B$ and~$\alpha$:
\begin{equation}
\begin{aligned}
&\exists C >0, \forall \eps > 0, \forall \phi \in \calM_+^1 (\R), \\
&\iiint_{\R^3} \alpha_{\eps} (x, y, z) B(x) B(y) \phi(y) \phi(z) dx \,dy \,dz - \left( \int_\R B(y) \phi(y)dy \right)^2 \geq - C \eps.
\end{aligned}
\label{assum:alphae}
\end{equation}
Firstly, when  $B$ is constant,  this assumption is obviously satisfied. Secondly, for $\phi$ a Dirac mass at $x_M$, this  assumption reduces to 
\[
\int_{\R} \alpha_{\eps} (x, x_M, x_M) B(x) dx - B(x_M) \geq - \frac{C}{B(x_M)} \eps,
\]
Recalling that $\int \alpha_{\eps} (x, y, z) dx = 1$ for all $y, z$, this 
implies that as $\eps$ vanishes, $\alpha_{\eps} (\cdot, x_M, x_M)$ 
is concentrated at points where $B$ is equal to its maximum $B(x_M)$, 
which is a restrictive necessary condition for \eqref{assum:alphae} to hold.
Thirdly, we state a sufficient condition: if $\alpha_{\eps} (\cdot, y, z) \to \alpha_0 (y, z) \in \mathcal{M}^1_+ (\R)$ with either
\[
 \forall \, y, z, \quad \int_\R \alpha_0 (y, z) (x) B(x) dx \geq B(y),
\]
or
\[
 \forall \, y, z, \quad \int_\R \alpha_0 (y, z) (x) B(x) dx \geq B(z),
\]
and if convergence is sufficiently fast, then~\eqref{assum:alphae} holds. 
In the first case this is a consequence of the Cauchy-Schwarz inequality, 
and in the second case we simply obtain that the left-hand side 
in~\eqref{assum:alphae} converges to~$0$ as~$\eps$ vanishes.
In particular, we may assume 
$\alpha_{\eps} (x, y, z) = \frac{1}{\eps} G \big( \frac{x - y}{\eps} \big)$ 
or $\frac{1}{\eps} G \big( \frac{x - z}{\eps} \big)$ for some appropriate 
kernel $G$. These situations are those we have in mind, although~\eqref{assum:alphae} in all generality may allow for some other cases. 
\\
All in all, \eqref{assum:alphae} means that the fecundity is improved from that of parents with the same trait. More generally,  the fecundity variation from one generation to the next is controlled from below by that of the parents.
Unsurprisingly, 
this dissipative feature implies that the variations of $\rho_\eps$ can 
be controlled at the limit $\eps \to 0$, as stated in  the following result whose proof is given in Section~\ref{ch3:estimBV}.

%
\begin{proposition}[$BV$ bound for \eqref{asfecundity}]\label{th_asfecundity}
    
      Let $n_{\eps}$ be the solution of \eqref{asfecundity} with initial data $n^0_{\eps}$ satisfying~\eqref{assum:rho_0}. Assume that $R(x,\rho)=\nu \rho$ and \eqref{assum:alphae}.
     
    Then, for all $T>0$, $\rho_\eps$ is uniformly bounded in $BV(0, T)$ and we have
\begin{equation*}
    \int_0^T \lvert \dot{\rho}_{\eps} (t) \rvert \,dt \leq \rho_M +\frac{ 2\eps}{\nu\rho_m} (\dot{\rho}_{\eps})_-(0)  
+\frac{2C}{\nu\rho_m}\left(T+\frac{\eps}{\nu\rho_m}(e^{-\frac{\nu\rho_m T}{\eps}}-1)\right),
\end{equation*}    
with $C$, $\rho_M$ and $\rho_m$ defined later on. This implies that, up to extraction of subsequences, there exist limits $\rho_{\eps} \to \rho$ in $L^1(0,T)$, and $n_{\eps} \xrightharpoonup{} n \in L^{\infty}_t (0,T;\calM_+(\R))$ in the sense of measures.
\end{proposition}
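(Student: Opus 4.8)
The plan is to reduce the problem to a scalar differential inequality showing that $\eps\dot\rho_\eps$ cannot become too negative, and then integrate it. First I would integrate \eqref{asfecundity} in $x$: using $\int_\R\alpha_\eps(x,y,z)\,dx=1$ and $R(x,\rho)=\nu\rho$, the total population obeys $\eps\dot\rho_\eps=J_\eps-\nu\rho_\eps^2$, where $J_\eps(t):=\int_\R B(x)\,n_\eps(t,x)\,dx$ is the $B$-weighted mass. To get a second scalar relation I would multiply \eqref{asfecundity} by $B(x)$ and integrate, obtaining
\[
\eps\dot J_\eps=\frac{1}{\rho_\eps}\iiint_{\R^3}B(x)B(y)\,\alpha_\eps(x,y,z)\,n_\eps(t,y)\,n_\eps(t,z)\,dx\,dy\,dz-\nu\rho_\eps\,J_\eps .
\]
Throughout I would use the a priori bounds $0<\rho_m\le\rho_\eps(t)\le\rho_M$ coming from the non-extinction and total-population estimates of Section~\ref{ch3:mainresults}, together with enough time-regularity of $\rho_\eps$ and $J_\eps$ (obtained, if needed, by a regularisation argument).

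The key is that assumption \eqref{assum:alphae}, applied with the probability measure $\phi=n_\eps(t,\cdot)/\rho_\eps(t)\in\calM_+^1(\R)$ and multiplied by $\rho_\eps(t)^2$, bounds the triple integral from below by $J_\eps^2-C\eps\rho_\eps^2$, so that $\eps\dot J_\eps\ge J_\eps^2/\rho_\eps-\nu\rho_\eps J_\eps-C\eps\rho_\eps$. Setting $p_\eps:=\eps\dot\rho_\eps$, one has $J_\eps=p_\eps+\nu\rho_\eps^2$ and $\eps\dot p_\eps=\eps\dot J_\eps-2\nu\rho_\eps p_\eps$. Substituting the lower bound for $\eps\dot J_\eps$ and expanding $J_\eps=p_\eps+\nu\rho_\eps^2$, the terms cubic in $\rho_\eps$ cancel and I am left with the clean inequality
\[
\eps\dot p_\eps\ \ge\ \frac{p_\eps^2}{\rho_\eps}-\nu\rho_\eps\,p_\eps-C\eps\,\rho_\eps\ \ge\ -\nu\rho_\eps\,p_\eps-C\eps\,\rho_\eps ,
\]
after discarding the nonnegative term $p_\eps^2/\rho_\eps$ (which would in fact give a concentration estimate analogous to the second bound of Theorem~\ref{th:simple}, but is not needed here).

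On the set $\{p_\eps<0\}$, writing $w:=(p_\eps)_-=\eps(\dot\rho_\eps)_-$ this reads $\eps\dot w\le-\nu\rho_m w+C\eps\rho_M$ (and at points where $w=0$ there is nothing to estimate), so Gronwall's lemma — comparison with the solution of the associated linear ODE — gives $w(t)\le w(0)\,e^{-\nu\rho_m t/\eps}+\frac{C\eps\rho_M}{\nu\rho_m}\bigl(1-e^{-\nu\rho_m t/\eps}\bigr)$, hence a pointwise bound on $(\dot\rho_\eps)_-=w/\eps$. Finally I would write $\int_0^T|\dot\rho_\eps|=\rho_\eps(T)-\rho_\eps(0)+2\int_0^T(\dot\rho_\eps)_-\le\rho_M+2\int_0^T(\dot\rho_\eps)_-$, integrate the pointwise bound over $(0,T)$ using $w(0)=\eps(\dot\rho_\eps)_-(0)$ and $\int_0^T e^{-\nu\rho_m t/\eps}\,dt=\tfrac{\eps}{\nu\rho_m}(1-e^{-\nu\rho_m T/\eps})$, and arrive at the announced estimate (with the constant $C$ of the statement being the one of \eqref{assum:alphae} rescaled by $\rho_M$). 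The uniform $BV(0,T)$ bound then yields $\rho_\eps\to\rho$ in $L^1(0,T)$ along a subsequence by Helly's theorem, and since $\|n_\eps(t,\cdot)\|=\rho_\eps(t)\le\rho_M$ uniformly, a further extraction gives $n_\eps\xrightharpoonup{}n$ in $L^\infty_t(0,T;\calM_+(\R))$ in the weak-$*$ sense.

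The hard part will be the algebraic step producing the clean inequality: a priori the $B$-weighted moment equation only controls $\eps\dot J_\eps$ from below by a quadratic expression with no usable sign, and it is the precise combination $\eps\dot p_\eps=\eps\dot J_\eps-2\nu\rho_\eps p_\eps$ together with $J_\eps=p_\eps+\nu\rho_\eps^2$ that makes the dangerous cubic terms in $\rho_\eps$ cancel exactly, leaving a dissipative inequality that is linear in $w=(p_\eps)_-$ with only an $O(\eps)$ forcing — and this is exactly where \eqref{assum:alphae} and the special form $R(x,\rho)=\nu\rho$ are both needed. The remaining points (the a priori bounds $\rho_m\le\rho_\eps\le\rho_M$, the time-regularity required to differentiate $\rho_\eps$ twice, and the handling of the switching set $\{p_\eps=0\}$) are routine.
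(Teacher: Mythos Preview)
Your proof is correct and follows essentially the same route as the paper: the paper also differentiates $\eps\dot\rho_\eps=\int B\,n_\eps-\nu\rho_\eps^2$ and substitutes the equation for $\p_t n_\eps$, obtaining exactly your identity $\eps\dot p_\eps=p_\eps^2/\rho_\eps-\nu\rho_\eps p_\eps+(\text{mixing term})$, where the square $p_\eps^2/\rho_\eps$ is what the paper calls the ``demographic stabilization'' term and the mixing term is bounded below via~\eqref{assum:alphae}. The remaining Gronwall step and the $BV$ conclusion are identical; your remark that the discarded square term would yield a concentration estimate is also in line with the paper's presentation.
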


\bigskip

In order to apply the same technique as for the model without mutations 
addressed in Section~\ref{sub:simplecase}, we need a convergence  assumption on $G_\eps$ as $\eps$ vanishes. More precisely,
we assume that there exists $C \in \R_+^*$ such that 
\begin{equation}\label{hyp:conv_ge}
\begin{aligned}
&\forall \eps > 0, \forall \phi \in W^{1,1}\text{ with }\lVert \phi' \rVert_{L^1} \leq 1, \forall \psi \in L^{\infty}\text{ with }\lVert \psi \rVert_{L^{\infty}} \leq 1, \\
&\Big\lvert \int_\R \psi(x) ( G_{\eps} \ast \phi ) (x) dx - \int_\R \psi(x) 
\phi(x) dx \Big\rvert \leq C \eps.
\end{aligned}
\end{equation}
This assumption on the convergence of $G_{\eps}$ as $\eps$ vanishes  
holds in the typical case where $G_{\eps}$ is Gaussian with variance $\eps^2$.
Specifically, we write $G_{\eps} (x) = \frac{1}{(2 \pi \eps^2)^{1/2}}e^{-x^2 / 2 
\eps^2}$.


We obtain the following result whose proof is given in Section~\ref{ch3:estimBV}.
\begin{proposition}[$BV$ bound for \eqref{astrait}]\label{th3}
Let $n_{\eps}$ be the solution to \eqref{astrait} associated with initial 
data $n^0_{\eps}$ satisfying~\eqref{assum:rho_0}. Assume~\eqref{assum:K0}, \eqref{hyp:conv_ge}, that that $K_0$ belongs to  $W^{1, 1}$ and the following ("non-extinction" in this case) condition
\begin{equation}
\exists \eta_0 > 0, \quad \forall \eps > 0, \quad \eta_{\eps} :=\inf_{\phi \in \calM_+^1(\R)} \, \int K_0 \ast \phi \cdot G_\eps \ast \phi \,dx \geq \eta_0.
\label{cond:nonextC4}
\end{equation}

Then $\rho_\eps$ is uniformly bounded in $BV(0, T)$. Namely, we have
\begin{equation*}
\int_0^T \lvert \dot{\rho}_{\eps} (t) \rvert \,dt \leq \rho_M+2 (\dot{\rho}_{\eps} (0))_- \frac{\eps}{C_1}(1-e^{-C_1 T / \eps}) + 2\frac{\eps C_2}{C_1^2} ( e^{-C_1 T / \eps} -1)+2 \frac{C_2}{C_1}T.
\end{equation*}
Then, up to extraction there exist $\rho\in L^1_{loc}(0,\infty)$ and $n \in 
L^{\infty}_t (0,T;\calM_+ (\R))$ such that $(\rho_{\eps})$ converges towards~$\rho$ in 
$L^1_{loc}(0,\infty)$, and $(n_{\eps})$ towards $n$ in the sense of measures, 
when~$\eps$ vanishes.

Moreover, for all $T > 0$, we have
\begin{equation*}
\int_0^T  \hskip-4pt  \int_{\R} (G_\eps \ast n_\eps) \left[\frac{K_0 \ast n_\eps}{\rho_\eps}- \nu \rho_\eps\right]^2dx\,dt=O(\eps).
\end{equation*}
\end{proposition}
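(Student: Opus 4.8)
\emph{Plan of proof.} The plan is to reproduce, for \eqref{astrait}, the scheme already used for \eqref{nomutation}: first a uniform two-sided bound $0<\rho_m\le\rho_\eps(t)\le\rho_M$, then a differential inequality for $(\dot\rho_\eps)_-$ obtained by differentiating in time the equation satisfied by $\rho_\eps$, and finally the dissipation estimate recovered from the same computation. Integrating \eqref{astrait} in $x$ and setting $J_\eps(t):=\int_\R (K_0\ast n_\eps)(G_\eps\ast n_\eps)\,dx$ gives $\eps\dot\rho_\eps=J_\eps/\rho_\eps-\nu\rho_\eps^2$. Since $\|G_\eps\ast n_\eps\|_{L^1}=\rho_\eps$ and $0\le K_0\ast n_\eps\le\|K_0\|_\infty\rho_\eps$, we get $\eta_0\rho_\eps^2\le J_\eps\le\|K_0\|_\infty\rho_\eps^2$, the lower bound being \eqref{cond:nonextC4} applied to $\phi=n_\eps/\rho_\eps$. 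Comparing $\rho_\eps$ with the solutions of the logistic ODEs $\eps\dot y=y(\eta_0-\nu y)$ and $\eps\dot y=y(\|K_0\|_\infty-\nu y)$ then yields $0<\rho_m\le\rho_\eps(t)\le\rho_M$ with constants depending only on $\eta_0,\|K_0\|_\infty,\nu$ and the initial mass.

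Next, put $u_\eps:=K_0\ast n_\eps$, $v_\eps:=G_\eps\ast n_\eps$ and $w_\eps:=u_\eps/\rho_\eps-\nu\rho_\eps$, so that \eqref{astrait} reads $\eps\p_t n_\eps=w_\eps v_\eps+\nu\rho_\eps(v_\eps-n_\eps)$. Differentiating $\eps\dot\rho_\eps=J_\eps/\rho_\eps-\nu\rho_\eps^2$ once more in time, inserting the equation for $\eps\p_t n_\eps$, using the symmetry of $K_0$ and elementary convolution manipulations, and reorganising with the help of $J_\eps=\eps\rho_\eps\dot\rho_\eps+\nu\rho_\eps^3$, I expect to reach an identity of the form
\begin{equation*}
\eps\ddot\rho_\eps=\frac{2}{\eps}\int_\R v_\eps w_\eps^2\,dx-\frac{J_\eps}{\rho_\eps^2}\,\dot\rho_\eps+\frac{2}{\eps\rho_\eps^2}\int_\R u_\eps v_\eps\,(G_\eps\ast u_\eps-u_\eps)\,dx .
\end{equation*}
Here the first term is nonnegative; the coefficient $J_\eps/\rho_\eps^2=\nu\rho_\eps+\eps\dot\rho_\eps/\rho_\eps$ is bounded and, by \eqref{cond:nonextC4}, satisfies $J_\eps/\rho_\eps^2\ge\eta_0$; the last term is the error produced by replacing $G_\eps\ast(\cdot)$ by $(\cdot)$. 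Writing $\delta_\eps:=G_\eps\ast u_\eps-u_\eps=(G_\eps\ast K_0-K_0)\ast n_\eps$, the point of assuming $K_0\in W^{1,1}$ is precisely that \eqref{hyp:conv_ge} may be applied to the \emph{fixed} kernel $K_0$, not to the concentrating $n_\eps$, which gives $\|\delta_\eps\|_{L^1}\le C\eps$ uniformly. The crux is then to show that this error is absorbed by the dissipation up to an additive constant: splitting $u_\eps=\rho_\eps(w_\eps+\nu\rho_\eps)$ and using Cauchy--Schwarz and Young's inequality, together with the bounds on $G_\eps$ supplied by \eqref{hyp:conv_ge} (and, where needed, by the concrete Gaussian rescaling mentioned after it), one should obtain $\frac{2}{\eps\rho_\eps^2}\bigl|\int_\R u_\eps v_\eps\,\delta_\eps\,dx\bigr|\le\frac{1}{\eps}\int_\R v_\eps w_\eps^2\,dx+C_2$ for a constant $C_2$. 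Combining, $\eps\ddot\rho_\eps+(J_\eps/\rho_\eps^2)\dot\rho_\eps\ge-C_2$, so at every time with $\dot\rho_\eps<0$ we get $\eps\ddot\rho_\eps\ge-\eta_0\dot\rho_\eps-C_2$; equivalently, with $C_1:=\eta_0$, the nonnegative function $p_\eps:=(\dot\rho_\eps)_-$ satisfies $\dot p_\eps\le\eps^{-1}(C_2-C_1 p_\eps)$ a.e.

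To conclude, comparison with the ODE $\dot P=\eps^{-1}(C_2-C_1P)$, $P(0)=(\dot\rho_\eps(0))_-$, which is finite by \eqref{assum:rho_0}, gives $p_\eps(t)\le(\dot\rho_\eps(0))_-e^{-C_1t/\eps}+(C_2/C_1)(1-e^{-C_1 t/\eps})$; integrating on $(0,T)$ and using $\int_0^T|\dot\rho_\eps|=(\rho_\eps(T)-\rho_\eps(0))+2\int_0^T(\dot\rho_\eps)_-\le\rho_M+2\int_0^T p_\eps$ produces exactly the announced bound. Boundedness of $\rho_\eps$ in $BV(0,T)$ then gives, by Helly's selection theorem, a subsequence with $\rho_\eps\to\rho$ in $L^1_{loc}(0,\infty)$, while the uniform mass bound $\rho_\eps\le\rho_M$ gives $n_\eps\xrightharpoonup{}n\in L^\infty_t(0,T;\calM_+(\R))$ in the sense of measures. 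Finally, integrating the key identity on $(0,T)$: the boundary term $\tfrac{\eps}{2}[\eps\dot\rho_\eps]_0^T$ and the term $\tfrac{\eps}{2}\int_0^T(J_\eps/\rho_\eps^2)\dot\rho_\eps\,dt$ are $O(\eps)$ (the latter thanks to the $BV$ bound just proved), and the error term is absorbed by the dissipation exactly as above, which forces $\tfrac12\int_0^T\!\int_\R v_\eps w_\eps^2\,dx\,dt\le O(\eps)$, i.e. the last assertion.

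The main obstacle is the second step: establishing the correct second-order identity for $\rho_\eps$, and above all taming the term coming from the mismatch between $G_\eps\ast(\cdot)$ and $(\cdot)$, which at first sight scales like $\eps^{-1}$. It cannot be estimated in isolation — it has to be controlled \emph{together with} the dissipation $\tfrac1\eps\int_\R v_\eps w_\eps^2$ via Cauchy--Schwarz — and the argument only closes because $K_0\in W^{1,1}$ makes \eqref{hyp:conv_ge} bear on $K_0$ rather than on the singular density $n_\eps$; this is also precisely where that hypothesis and $K_0\in W^{1,1}$ enter, and what the constant $C_2$ encodes.
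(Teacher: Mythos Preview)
Your overall architecture matches the paper's proof: bound $\rho_\eps$ from both sides, differentiate $\eps\dot\rho_\eps=J_\eps/\rho_\eps-\nu\rho_\eps^2$ once more, isolate a nonnegative dissipation $\tfrac1\eps\int v_\eps w_\eps^2$, control the remainder coming from $G_\eps\ast u_\eps-u_\eps$, deduce a Grönwall inequality on $(\dot\rho_\eps)_-$, and read off the $BV$ bound and the $O(\eps)$ dissipation by integrating. That is exactly the paper's route.

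Two points of divergence. First, your second-order identity does not match the paper's. After using the evenness of $K_0$ and routine regrouping the paper obtains
\[
\eps\ddot\rho_\eps=-\nu\rho_\eps\dot\rho_\eps-\tfrac12\,\frac{\dot\rho_\eps}{\rho_\eps^2}\,J_\eps+\frac{1}{\eps}\int_\R v_\eps w_\eps^2\,dx+\frac{1}{\eps\rho_\eps^2}\int_\R u_\eps v_\eps\bigl(G_\eps\ast u_\eps-u_\eps\bigr)\,dx,
\]
so there is an extra $-\nu\rho_\eps\dot\rho_\eps$ you have dropped, and the dissipation and error terms carry coefficient $1$, not $2$. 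The resulting differential inequality in the paper is $\tfrac{\eps}{2}\frac{d}{dt}(\dot\rho_\eps)_-\le -(\tfrac12\eta_0+\nu\rho_\eps)(\dot\rho_\eps)_-+O(1)$; your constants should be rechecked.

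Second, and more substantively, the paper does \emph{not} absorb the error term into the dissipation. It declares the last term $O(1)$ directly: one applies \eqref{hyp:conv_ge} with $\phi=K_0\ast n_\eps$, whose $W^{1,1}$ seminorm $\|\phi'\|_{L^1}=\|K_0'\ast n_\eps\|_{L^1}\le\|K_0'\|_{L^1}\rho_M$ is bounded precisely because $K_0\in W^{1,1}$, so that $\|G_\eps\ast u_\eps-u_\eps\|_{L^1}=O(\eps)$ kills the $1/\eps$ prefactor. Your Cauchy--Schwarz/Young scheme, as written, does not close: after splitting $u_\eps=\rho_\eps(w_\eps+\nu\rho_\eps)$ you are left with $\tfrac{1}{\eps}\int v_\eps\delta_\eps^2$ (or $\tfrac\nu\eps\int v_\eps\delta_\eps$), and with only $\|\delta_\eps\|_{L^1}=O(\eps)$ and $\|\delta_\eps\|_\infty=O(1)$ at hand, the unbounded factor $\|v_\eps\|_\infty$ reappears. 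So the detour through absorption is both unnecessary for the paper's argument and, as sketched, incomplete; you should instead follow the paper and bound the error term in one stroke via \eqref{hyp:conv_ge}. (Your observation that $u_\eps v_\eps$ is not uniformly in $L^\infty$ is pertinent and the paper is terse on this point; but the absorption route does not resolve it either.)
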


For the generalized no mutation model \eqref{eq:gennLyapunov}, 
 a Lyapunov structure is identified under the following assumptions on $K_S : \R^2 \to \R_+$, $R_0 : \R \to \R_+$ 
and $R_1 : \R_+ \to \R_+$ :
\begin{align}
 &K_S \in \calC_b (\R^2, \R_+) \text{ is symmetric: } \forall x, y \in \R, \, K_S (x,y) = K_S (y,x),
 \label{assumption:Ksym}
 \\
 &\forall \xi \in \calM_+ (\R) \backslash \{0 \}, \quad \iint_{\R^2} K_S(x,y) \xi(x)\xi(y) dx dy > 0,
 \label{assumption:Kpos}
 \\
 &\supp(q^0) \text{ is compact or } R_0 \text{ is proper}, \quad \text{with } q^0= \frac{n^0_\eps}{\rho_\eps^0}, 
 \label{assumption:R0}
 \\
 &R_1 \text{ is increasing and proper},
 \label{assumption:R1}
 \\
 &\exists ! \, x_M \in \supp(q^0), \quad y \mapsto K_S(x_M, y) - R_0 (y) \text{ reaches its maximum at } x_M.
 \label{assumption:xM}
\end{align}
In this framework, a stability result is obtained for the population density $\delta_{x_M}$.
%
\begin{theorem} [Local stability of $x_M$]  \label{thm:lyapunov}
 Under assumptions \eqref{assumption:Ksym}-\eqref{assumption:xM}, the Dirac mass $\delta_{x_M}$ is locally asymptotically stable for~\eqref{eq:gennLyapunov}.
\end{theorem}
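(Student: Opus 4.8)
The plan is to rescale time so as to remove $\eps$ and the competition term $R_1$ from \eqref{eq:gennLyapunov}, thereby reducing the statement to the local asymptotic stability of $\delta_{x_M}$ for a closed replicator equation, and then to run a Lyapunov/LaSalle argument borrowed from evolutionary game theory. Set $q(t,x):=n_\eps(\eps t,x)/\rho_\eps(\eps t)$ and $\sigma(t):=\rho_\eps(\eps t)$, so that $q(t,\cdot)\in\calM_+^1(\R)$ for every $t$. Inserting $n_\eps(\eps t,\cdot)=\sigma(t)\,q(t,\cdot)$ into \eqref{eq:gennLyapunov} and using that $\dot\sigma(t)=\eps\dot\rho_\eps(\eps t)=\sigma(t)\big(\iint K_S(x,y)q(t,x)q(t,y)\,dx\,dy-\int R_0\,q(t,\cdot)-R_1(\sigma(t))\big)$, one checks that the two occurrences of $R_1$ cancel (this is exactly why $R$ is taken of the separable form $R_0+R_1$) and that $q$ solves the autonomous, $\eps$-free equation
\[
\p_t q(t,x)=\Big(\calF[q(t,\cdot)](x)-\int_\R \calF[q(t,\cdot)](y)\,q(t,y)\,dy\Big)\,q(t,x),
\]
where $\calF[\mu](x):=\int_\R K_S(x,y)\,\mu(dy)-R_0(x)$ is the fitness of trait $x$ in the population $\mu$, and $\delta_{x_M}$ is a stationary point. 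Since the time change $t\mapsto\eps t$ does not affect (asymptotic) stability, and since $\sigma$ is recovered from $q$ through the scalar ODE above — which, once $q(t)\rightharpoonup\delta_{x_M}$, drives $\sigma$ to the unique positive root of $R_1(\cdot)=K_S(x_M,x_M)-R_0(x_M)$ by \eqref{assumption:R1}, after first using \eqref{assumption:Kpos} and \eqref{assumption:R1} to keep $\sigma$ bounded away from $0$ and $\infty$ — it suffices to prove local asymptotic stability of $\delta_{x_M}$ for this replicator equation.

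The second step is to read \eqref{assumption:xM} game-theoretically. By the symmetry \eqref{assumption:Ksym}, the fitness landscape felt in the environment $\delta_{x_M}$ is $\psi(x):=\calF[\delta_{x_M}](x)=K_S(x,x_M)-R_0(x)$, and \eqref{assumption:xM} says precisely that $\psi$ attains its maximum — uniquely on $\supp(q^0)$ — at $x_M$, i.e. $x_M$ is a strict best response to itself (a strict Nash/ESS condition). I will work with the mean-fitness functional $\Phi[q]:=\tfrac12\iint_{\R^2}K_S(x,y)\,q(dx)q(dy)-\int_\R R_0\,dq$, whose first variation is $\calF[q]$; along the flow one has the "fundamental theorem" identity $\tfrac{d}{dt}\Phi[q(t)]=\int\calF[q]\big(\calF[q]-\int\calF[q]\,dq\big)dq=\mathrm{Var}_{q(t)}\!\big(\calF[q(t)]\big)\ge 0$. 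Assumption \eqref{assumption:xM} gives the first-order optimality $\int(\psi-\psi(x_M))\,dq\le 0$, with strict inequality for every $q\neq\delta_{x_M}$ supported in $\supp(q^0)$; since $\Phi$ is quadratic this, together with the positivity \eqref{assumption:Kpos}, is what should make $\delta_{x_M}$ a strict local maximizer of $\Phi$, so that $V:=\Phi[\delta_{x_M}]-\Phi[q]$ is a bona fide Lyapunov functional in a neighbourhood of $\delta_{x_M}$ (equivalently one may track the relative entropy $-\log q(\{x_M\})$, whose time-derivative equals $\int(\psi-\psi(x_M))\,dq+\iint K_S(x,y)(q-\delta_{x_M})(dx)(q-\delta_{x_M})(dy)$).

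For the compactness part I use \eqref{assumption:R0}: if $\supp(q^0)$ is compact then $\supp q(t)\subseteq\supp(q^0)$ for all $t$, because the multiplicative factor in the replicator equation is everywhere positive; if instead $R_0$ is proper, a moment estimate prevents mass from escaping to infinity. In either case $\{q(t)\}_{t\ge 0}$ is relatively compact for the weak-$*$ topology on $\calM_+^1(\R)$. Lyapunov stability then follows from $V$ being non-increasing, weak-$*$ continuous at $\delta_{x_M}$ with $V(\delta_{x_M})=0$, and controlling weak-$*$ proximity to $\delta_{x_M}$ (the strict maximality of $\psi$ on the compact support). Attractivity follows from LaSalle's invariance principle: an $\omega$-limit point $\bar q$ lies in a small neighbourhood of $\delta_{x_M}$, carries an invariant trajectory, and satisfies $\tfrac{d}{dt}\Phi\equiv 0$ along it, hence $\mathrm{Var}_{\bar q}(\calF[\bar q])=0$, i.e. $\bar q$ is a stationary state; the only stationary state in a small neighbourhood of $\delta_{x_M}$ is $\delta_{x_M}$ itself (again by the strictness in \eqref{assumption:xM} together with \eqref{assumption:Kpos}). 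Undoing the time change and feeding back the convergence of $q$ into the ODE for $\sigma$ yields local asymptotic stability of $\delta_{x_M}$ for \eqref{eq:gennLyapunov}.

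The main obstacle is the step underlined in the second and third paragraphs: showing that $\delta_{x_M}$ is a \emph{strict} local maximizer of $\Phi$ (equivalently, that the relative-entropy functional genuinely decreases near $\delta_{x_M}$) and that no other stationary state of the replicator equation accumulates at $\delta_{x_M}$. This is the infinite-dimensional counterpart of the classical "ESS $\Rightarrow$ local asymptotic stability" theorem for replicator dynamics, and it is exactly here that the positivity hypothesis \eqref{assumption:Kpos} and the strictness/uniqueness in \eqref{assumption:xM} must be exploited, because the quadratic self-interaction term $\iint K_S(x,y)(q-\delta_{x_M})(dx)(q-\delta_{x_M})(dy)$ competes, at the same order, with the favourable first-order term $\int(\psi-\psi(x_M))\,dq$. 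A secondary, more routine difficulty is making LaSalle's principle rigorous on the non-locally-compact space $\calM_+^1(\R)$: one restricts to the tight — hence weak-$*$ compact — set furnished by \eqref{assumption:R0} and uses the weak-$*$ continuity of $q\mapsto\Phi[q]$ and $q\mapsto\calF[q]$ guaranteed by $K_S\in\calC_b(\R^2,\R_+)$.
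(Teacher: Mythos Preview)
Your overall strategy matches the paper's exactly: pass to the normalized density $q(t,\cdot)=n_\eps(\eps t,\cdot)/\rho_\eps(\eps t)$, obtain the closed replicator equation~\eqref{eq:genqLyapunov}, use the mean-fitness functional (your $\Phi$, the paper's $J$ in~\eqref{eq:Lyapunov}) as a Lyapunov function with $\tfrac{d}{dt}J=\mathrm{Var}_q(\calF[q])\ge 0$, secure compactness from~\eqref{assumption:R0}, and then invoke the Sandholm/Cheung stability results from continuous-strategy game dynamics. Your handling of $\sigma=\rho_\eps(\eps\cdot)$ is in fact more explicit than the paper's.

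The gap you flag --- that $\delta_{x_M}$ is a \emph{strict} local maximizer of $\Phi$ and that no other rest point accumulates there --- is real, and the paper fills it with an argument you do not have. You frame the issue as a competition between the favourable first-order term $\int(\psi-\psi(x_M))\,dq$ and the quadratic term $\iint K_S\,(q-\delta_{x_M})\otimes(q-\delta_{x_M})$; the paper instead observes (Lemma~\ref{lem:convex}) that~\eqref{assumption:Kpos} makes $J$ \emph{strictly convex} on the convex set $\calM_+^1(\R)$, via
\[
J(\theta q_1+(1-\theta)q_2)=\theta J(q_1)+(1-\theta)J(q_2)-\theta(1-\theta)\iint_{\R^2} K_S(x,y)(q_1-q_2)(x)(q_1-q_2)(y)\,dx\,dy.
\]
A strictly convex function on a convex set can have local maxima only at extreme points, and the extreme points of $\calM_+^1(\R)$ are precisely the Dirac masses. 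The first-order optimality condition at $\delta_x$ then reads that $y\mapsto K_S(x,y)-R_0(y)$ is maximized at $y=x$; by~\eqref{assumption:xM} this singles out $x_M$ uniquely on $\supp(q^0)$. This convexity-to-extreme-points step bypasses the first-order/second-order balance you were worried about and reduces everything to the scalar optimality condition, after which the cited game-theoretic results apply. That is the missing idea in your proposal.
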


This result is based on the construction of a Lyapunov functional which has been advocated for these problems of adaptive dynamics, e.g., in~\cite{JabRao2011}. Here it relies on the particular structure of~\eqref{eq:gennLyapunov}, which can be reduced to a continuous replicator equation. Then, we can apply game theoretical methods  following \cite{Cheung2014, Cheung2016,Sandholm2001,Sandholm2010}.
\\

In the general case of a death rate depending on both traits and 
the total population, Lyapunov functionals are not available and 
the methods Theorem~\ref{thm:lyapunov} do not apply. Therefore, 
following \cite{Dieketal2005, BarPer2008, LorMirPer2011}, 
we may try to express concentration of $n_\eps$ at a point $\bar x$ 
as in a low temperature Gaussian $\frac{1}{\sqrt{2 \pi \eps}}\exp(-\frac{|x- \bar x|^2}{2\eps})$.  
Because the quadratic form is too specific for our problem, we rather perform the Hopf-Cole transform 
\begin{equation*}
u_\eps(t,x)=\eps \ln n_\eps (t,x),
\end{equation*}
and apply a Hamilton-Jacobi approach.  
The limiting function $u= \lim_{\eps \to 0} u_\eps$ will give the concentration shape analogous to $-\frac{|x- \bar x|^2}{2}$ but specific to the problem at hand.  In particular, the population concentrates on the points where $u(t,x)$ vanishes. When there is a unique point, monomorphism occurs but  polymorphism is possible. The existence of such a limit is asserted by the

\begin{theorem}[Lipschitz estimates for $u_\eps$]\label{th:HJ}
Under some assumptions on the initial data $u_\eps^0$, for both models \eqref{asfecundity} and \eqref{astrait}, the corresponding $u_\eps$ are locally Lipschitz uniformly in $\eps$.

Moreover, we have a global upper bound on $u_\eps$. Namely, there exists a constant $C$, such that
\begin{equation*}
u_\eps(t,x) \leq \eps \ln \left(C+\frac{C(1+t)}{\eps}\right).
\end{equation*}

Consequently, we can extract from $u_\eps$ a sequence which converges locally uniformly to a limit $u(t,x) \leq 0$ and the limiting concentration points of $n_\eps$ are included in the set $\{u(t,x)=0\}$.

\end{theorem}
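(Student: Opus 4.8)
The plan is to work with the Hopf--Cole transform $u_\eps = \eps \ln n_\eps$ and derive, for each of the two models, the equation satisfied by $u_\eps$, then build a sub-/super-solution argument combined with Bernstein-type gradient estimates. For the \eqref{asfecundity} model with $R(x,\rho) = \nu\rho$, differentiating and dividing by $n_\eps$ gives
\begin{equation*}
\p_t u_\eps(t,x) = \frac{1}{\rho_\eps(t)\, n_\eps(t,x)} \iint_{\R^2} B(y)\alpha_\eps(x,y,z) n_\eps(t,y) n_\eps(t,z)\, dy\, dz - \nu \rho_\eps(t),
\end{equation*}
and similarly for \eqref{astrait} the right-hand side becomes $\frac{1}{\rho_\eps(t)\, n_\eps(t,x)}\big(K_0 \ast n_\eps)(x)\,(G_\eps \ast n_\eps)(x) - \nu\rho_\eps(t)$. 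The first observation is that these equations are, structurally, ODEs in $t$ for each fixed $x$, with a nonlocal source term that is nonnegative; hence the crux of the Lipschitz estimate is to control that source from above and below in terms of $\rho_\eps$ and of $\|G_\eps\|_{L^\infty}$-type quantities, uniformly in $\eps$.

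First I would record the a priori bounds already available: the non-extinction conditions and the upper/lower bounds $\rho_m \leq \rho_\eps(t) \leq \rho_M$ on the total population (these come from the earlier part of Section~\ref{ch3:mainresults}), together with the $BV$ estimates of Theorem~\ref{th:simple}, Proposition~\ref{th_asfecundity} and Proposition~\ref{th3}. Next I would establish the global upper bound on $u_\eps$: since the birth term is at most $\frac{\|B\|_\infty}{\rho_\eps} \rho_\eps^2 = O(1)$ in the AF case (and at most $\frac{1}{\eps}\|G\|_\infty \rho_M^2$ in the ATH case because of the factor $\frac1\eps$ in $G_\eps$), integrating the equation $\eps \p_t n_\eps \le C n_\eps + (\text{const})$ — more precisely bounding $\p_t u_\eps \le \frac{C}{\eps} e^{-u_\eps/\eps}$ after isolating $n_\eps$ in the denominator — and a Gronwall/comparison argument yields $n_\eps(t,x) \le C + \frac{C(1+t)}{\eps}$, hence $u_\eps(t,x) \le \eps\ln\!\big(C + \frac{C(1+t)}{\eps}\big)$. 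For the local Lipschitz bound in $x$ I would differentiate the $u_\eps$-equation in $x$: the competition term $-\nu\rho_\eps(t)$ is $x$-independent so it drops out, and one is left to estimate $\p_x$ of the ratio $\frac{(\text{source})_\eps(x)}{n_\eps(x)}$. Writing $p_\eps = \p_x u_\eps$ and using $\p_x n_\eps = \frac{1}{\eps} p_\eps n_\eps$, the source/denominator ratio differentiates into a combination of $p_\eps$ times bounded factors plus terms involving $\p_x$ of the kernel ($B'$, $K_0'$, $G_\eps'$); the assumptions $B, K_0 \in W^{1,1}$ (and the Gaussian form of $G_\eps$, which controls $\|G_\eps'\|$ and the convolution estimate \eqref{hyp:conv_ge}) are exactly what make these terms bounded. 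One then closes a differential inequality for $p_\eps$ — or, cleaner, for $|p_\eps|$ against a quadratic-in-$p_\eps$ bound coming from the $n_\eps$ in the denominator — obtaining a local-in-space, local-in-time uniform bound $|p_\eps| \le C$, provided the initial data $u_\eps^0$ is uniformly Lipschitz on compacts (this is the unstated ``some assumptions on the initial data'').

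Once uniform local Lipschitz bounds in $x$ are in hand, the time-regularity follows from the equation itself: $\p_t u_\eps$ is bounded on compact sets because the source term, divided by $n_\eps = e^{u_\eps/\eps} \ge e^{-C/\eps}$, is controlled once we also have a local lower bound on $u_\eps$ (a local lower bound propagates from the initial data by the same comparison argument run downward, using the non-extinction condition to keep the source from collapsing). Hence $u_\eps$ is locally Lipschitz in $(t,x)$ uniformly in $\eps$, and Arzel\`a--Ascoli gives a locally uniformly convergent subsequence $u_\eps \to u$; passing the global upper bound to the limit gives $u \le 0$. Finally, if $u(t_0,x_0) < 0$ then $n_\eps(t_0,x_0) = e^{u_\eps(t_0,x_0)/\eps} \to 0$ locally uniformly near $x_0$, so no mass of the limit measure $n$ can sit there; this is the statement that the limiting concentration set is contained in $\{u = 0\}$.

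The main obstacle is the ATH case: because $G_\eps$ carries a factor $\frac1\eps$, the birth term $\frac1\eps (K_0\ast n_\eps)(G_\eps \ast n_\eps)$ is a priori of size $\frac1\eps$, and one must check that after dividing by $n_\eps(x)$ the quotient is still $O(\frac1\eps)$ in a way compatible with $\eps\p_t u_\eps$ being $O(1)$ — i.e. that $G_\eps \ast n_\eps$ does not exceed a constant times $n_\eps$ pointwise up to lower order, which is where \eqref{hyp:conv_ge} and the $W^{1,1}$ regularity of $K_0$, plus the already-established Lipschitz-in-$x$ control of $u_\eps$ bootstrapped into the estimate, must be used carefully; the gradient estimate and the $L^\infty$ bound therefore have to be proven essentially simultaneously rather than in sequence. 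A secondary technical point is handling the denominator $n_\eps$, which can be exponentially small, so all comparison arguments must be set up in terms of $u_\eps$ directly (where the nonlinearity $e^{-u_\eps/\eps}$ appears) and localized in space so that the lower bound on $u_\eps$ needed to tame that nonlinearity is available.
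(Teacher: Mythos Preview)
Your outline has the right architecture (comparison principle for crude bounds, Bernstein-type estimate for the gradient, mass constraint for the sharp upper bound), but it misses the one algebraic observation that makes the program run, and without it several of your steps do not close.

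The point you are missing is the change of variables $z' = (x-z)/\eps$ in the convolution against $G_\eps$ (and likewise against $\alpha_\eps(x,y,z) = \frac{1}{\eps}\alpha(\frac{x-z}{\eps},y)$). After this substitution the factor $\frac{1}{\eps}$ disappears and one gets the clean form
\[
\frac{G_\eps \ast n_\eps(t,x)}{n_\eps(t,x)} \;=\; \int_\R G(z')\, e^{D_\eps(t,x,z')}\,dz',
\qquad D_\eps(t,x,z') := \frac{u_\eps(t,x-\eps z') - u_\eps(t,x)}{\eps}.
\]
So $\p_t u_\eps = k_\eps(t,x)\int G(z')e^{D_\eps}dz' - r_\eps(t,x)$, with no $1/\eps$ anywhere and no exponentially small denominator to tame: the exponent is a difference quotient, bounded by the local Lipschitz constant of $u_\eps$. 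Your worry that the ATH birth term is of size $1/\eps$, that the estimates must be ``proven simultaneously'', and that one must control $e^{-u_\eps/\eps}$ on the denominator, are all artifacts of working with $n_\eps$ instead of $u_\eps$ in this rewritten form. Once you have it, the comparison with $v(t,x) = -A|x| + C_2(1+t)$ is immediate (because $e^{D_\eps}\leq e^{A|z'|}$), and the Bernstein step is carried out on $w_\eps^\lambda := \p_x u_\eps + \lambda u_\eps$: the extra $\lambda u_\eps$ produces the term $-\lambda k_\eps D_\eps e^{D_\eps}$ which, combined with $\p_x k_\eps\, e^{D_\eps}$, gives a function $D\mapsto e^D(\p_x k_\eps + \lambda k_\eps - \lambda k_\eps D)$ bounded above by $e^{\p_x k_\eps/(\lambda k_\eps)}\lambda k_\eps$ --- this is where assumption~\eqref{Hyp:aeps} enters. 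Your proposed differential inequality for $p_\eps$ alone does not close without this device.

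A second, smaller gap: your derivation of the global upper bound $u_\eps \le \eps\ln(C + C(1+t)/\eps)$ via $\p_t u_\eps \le \frac{C}{\eps}e^{-u_\eps/\eps}$ does not produce that specific form (it gives at best $n_\eps \lesssim t/\eps^2$). The paper obtains this bound \emph{after} the Lipschitz-in-$x$ estimate, by combining the mean value inequality $u_\eps(t,z)\geq u_\eps(t,x) - C(1+t)(1+\max(|x|,|y|))|z-x|$ with the mass constraint $\int_x^y e^{u_\eps/\eps}\le \rho_M$, then optimizing over $y$. Also, your claim that the AF birth term is $O(1)$ relies on misreading $\int \alpha_\eps(x,y,z)\,dx = 1$ as a pointwise bound; with the rescaled $\alpha_\eps$ used in Section~\ref{ch3:HJ} it is $O(1/\eps)$ pointwise and the same change-of-variables trick is needed there too.
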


The proof of this theorem and its consequences are the topic of Section~\ref{ch3:HJ}.
It requires specific assumptions in both cases (AF or ATH),  which are too long for this general presentation, and thus are specified in the corresponding sections.

\subsection{Boundedness of the total population and non-extinction}

In preparation to prove these theorems, we begin with some controls of the total population, $\rho_\eps$, which, in full generality  satisfies
\begin{equation} \label{eq:GenRho}
\eps\dot{\rho}_\eps(t) = \bigintssss_\R  \Big( \iint_{\R^2} K_\eps(x,y,z)   \frac{n_\eps(t,z)}{\rho_\eps(t)}n_\eps(t, y) dy\,dz  - R(x, \rho_\eps(t))n_\eps(t, x)\Big) \,dx.
\end{equation}
We define
\begin{equation}
K_M := \sup_{0<\eps \leq 1}\sup_{\phi \in \calM_1^+ (\R)} \, \sup_y \, \iint_{\R^2} K_\eps(x,y,z) dx\, \phi(z) dz < +\infty,
\label{hyp:Kbounded}
\end{equation}
and, to ensure that $\rho_\eps$ remains bounded along all trajectories, we complement \eqref{hyp:Rinc} with 
\begin{equation}
\label{hyp:Rinf}
\begin{aligned}
&\exists R_m : \R_+ \to \R_+, \text{ increasing, with } R_m(0) = 0, \quad R_m (+\infty) = +\infty, \\
& \text{and}\quad \forall x, \; R(x, \rho) \geq R_m (\rho), \qquad \rho_M := R_m^{-1}(K_M). 
\end{aligned}
\end{equation}

 We first observe the following boundedness result:
\begin{proposition}[Upper bound for $\rho_\eps$]\label{rho:upperbound}
Under assumptions \eqref{hyp:Rinc}, \eqref{hyp:Rinf} and \eqref{hyp:Kbounded}, all trajectories of \eqref{eq:resgeneralform} are forward-$\rho_M$-bounded from above in $\rho_\eps$, by which we mean that $\dot{\rho}_\eps (t) < 0$ as long as $\rho_\eps(t) > \rho_M$.
\end{proposition}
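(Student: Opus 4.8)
The plan is to prove the differential inequality $\dot\rho_\eps(t)<0$ whenever $\rho_\eps(t)>\rho_M$ by direct estimation of the right-hand side of \eqref{eq:GenRho}. The key point is that the birth term and the death term can be separately controlled: the birth term is bounded above by $K_M \rho_\eps$ using the definition \eqref{hyp:Kbounded}, while the death term is bounded below by $R_m(\rho_\eps)\rho_\eps$ using the lower bound in \eqref{hyp:Rinf}. Subtracting, one gets $\eps\dot\rho_\eps(t) \le \big(K_M - R_m(\rho_\eps(t))\big)\rho_\eps(t)$, and since $R_m$ is increasing with $R_m(\rho_M)=K_M$ (by the definition $\rho_M=R_m^{-1}(K_M)$), the factor $K_M-R_m(\rho_\eps(t))$ is strictly negative as soon as $\rho_\eps(t)>\rho_M$. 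This yields the claim.

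More precisely, I would carry out the steps as follows. First, starting from \eqref{eq:GenRho}, split the integrand and estimate the birth contribution:
\[
\iint_{\R^2} K_\eps(x,y,z)\,\frac{n_\eps(t,z)}{\rho_\eps(t)} n_\eps(t,y)\,dy\,dz,
\]
integrated in $x$, is at most $K_M\,\rho_\eps(t)$. To see this, write $n_\eps(t,y)\,dy = \rho_\eps(t)\,\phi(y)\,dy$ and $n_\eps(t,z)\,dz = \rho_\eps(t)\,\phi(z)\,dz$ with $\phi = n_\eps(t,\cdot)/\rho_\eps(t) \in \calM_1^+(\R)$; then the integral over $x,z$ for fixed $y$ is bounded by $K_M$ after dividing by one factor of $\rho_\eps(t)$, and the remaining integral in $y$ against $\phi$ contributes another factor $\rho_\eps(t)$, giving $K_M\rho_\eps(t)$ total. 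Second, estimate the death contribution from below: $\int_\R R(x,\rho_\eps(t)) n_\eps(t,x)\,dx \ge R_m(\rho_\eps(t))\int_\R n_\eps(t,x)\,dx = R_m(\rho_\eps(t))\,\rho_\eps(t)$ by \eqref{hyp:Rinf}. Third, combine the two to obtain $\eps\dot\rho_\eps(t) \le \big(K_M - R_m(\rho_\eps(t))\big)\rho_\eps(t)$. Fourth, observe that when $\rho_\eps(t)>\rho_M = R_m^{-1}(K_M)$, monotonicity of $R_m$ gives $R_m(\rho_\eps(t))>K_M$, so the right-hand side is strictly negative; since $\eps>0$, we conclude $\dot\rho_\eps(t)<0$.

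There is no serious obstacle here; this is essentially a one-line a priori bound once the three ingredients are unpacked. The only point requiring a little care is the bookkeeping of the two factors of $\rho_\eps(t)$ in the quadratic-but-$1$-homogeneous birth term, and the observation that the supremum defining $K_M$ is taken precisely over probability measures $\phi$ in the $z$ variable so that it applies to $\phi = n_\eps(t,\cdot)/\rho_\eps(t)$; a standard density argument handles the fact that this $\phi$ is a measure rather than a bounded function, using continuity of $K_\eps$ from \eqref{assum:K0} when needed. One should also note that $R_m^{-1}$ is well-defined on $[0,+\infty)$ because $R_m$ is increasing with $R_m(0)=0$ and $R_m(+\infty)=+\infty$, as assumed in \eqref{hyp:Rinf}, so $\rho_M$ makes sense.
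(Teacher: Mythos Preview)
Your proof is correct and follows exactly the same approach as the paper: derive the differential inequality $\eps\dot{\rho}_\eps(t) \leq \rho_\eps(t)\big[K_M - R_m(\rho_\eps(t))\big]$ from \eqref{eq:GenRho} using \eqref{hyp:Kbounded} and \eqref{hyp:Rinf}, then conclude by monotonicity of $R_m$. The paper does this in one line; your more detailed bookkeeping of the factors of $\rho_\eps$ in the birth term is a welcome elaboration but not a different method.
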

Indeed, using the equation  \eqref{eq:GenRho} and the assumptions \eqref{hyp:Rinf} and \eqref{hyp:Kbounded}, we may write
\[
\eps\dot{\rho}_\eps(t) \leq \rho_\eps(t) [K_M - R_m (t{\rho}_\eps(t) )],
\]
from which the result follows immediately. 
\\

Conversely, we can study conditions that ensure non-extinction of the population: $\rho_\eps(t) \geq \rho_m > 0$. As a first example, let
\begin{equation}
\kappa_m (\rho) := \inf_{0<\eps \leq 1}\inf_{\phi \in \calM_+^1 (\R)} \, \inf_y \,   \iint_{\R^2}  K_\eps(x,y,z) dx \, \phi(z) dz - R(y, \rho)
\label{hyp:Kinf}
\end{equation}
\begin{proposition}[Lower bound for $\rho_\eps$ under assumption \eqref{hyp:Kinf}]
Under assumption \eqref{hyp:Rinc} and if there exists $\rho_m > 0$ such that $\kappa_m (\rho_m) = 0$, with $\kappa_m$ defined in~\eqref{hyp:Kinf}, then all trajectories of~\eqref{eq:resgeneralform} are forward-$\rho_m$-bounded from below in $\rho_\eps$, by which we mean that $\dot{\rho}_\eps (t) > 0$ as long as $\rho_\eps(t) < \rho_m$.
\label{prop:nonext}
\end{proposition}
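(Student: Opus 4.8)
The plan is to mirror, in the opposite direction, the one-line computation behind Proposition~\ref{rho:upperbound}. Starting from the identity \eqref{eq:GenRho} for $\eps\dot\rho_\eps(t)$, I would first reorganize the triple integral by Fubini: integrate $K_\eps$ in its first argument $x$ (the offspring trait), pair the result with $n_\eps(t,z)/\rho_\eps(t)$ in $z$ (the male trait) while keeping $n_\eps(t,y)$ outside, and in the death term rename the dummy variable $x$ into $y$. This rewrites the right-hand side as
\[
\eps\dot\rho_\eps(t) = \int_\R n_\eps(t,y)\Big[\iint_{\R^2} K_\eps(x,y,z)\,dx\,\phi_\eps(t,z)\,dz - R(y,\rho_\eps(t))\Big]\,dy,
\]
where $\phi_\eps(t,\cdot):=n_\eps(t,\cdot)/\rho_\eps(t)$ is a probability density, i.e. $\phi_\eps(t,\cdot)\in\calM_+^1(\R)$. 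This manipulation makes sense exactly when $\rho_\eps(t)>0$, which is the only regime relevant to a non-extinction statement.

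Next I would plug in the definition \eqref{hyp:Kinf} of $\kappa_m$: for every $y\in\R$, every $\eps\in(0,1]$ and every $\phi\in\calM_+^1(\R)$ one has $\iint_{\R^2}K_\eps(x,y,z)\,dx\,\phi(z)\,dz - R(y,\rho)\geq \kappa_m(\rho)$. Taking $\phi=\phi_\eps(t,\cdot)$ and $\rho=\rho_\eps(t)$ and integrating against $n_\eps(t,\cdot)\geq 0$ yields the differential inequality
\[
\eps\dot\rho_\eps(t) \geq \kappa_m(\rho_\eps(t))\int_\R n_\eps(t,y)\,dy = \kappa_m(\rho_\eps(t))\,\rho_\eps(t).
\]

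It then remains to exploit the monotonicity of $R$. By \eqref{hyp:Rinc}, for each fixed $(\eps,\phi,y)$ the map $\rho\mapsto \iint_{\R^2}K_\eps(x,y,z)\,dx\,\phi(z)\,dz - R(y,\rho)$ is decreasing, hence so is the infimum $\kappa_m$; therefore $\kappa_m(\rho)\geq\kappa_m(\rho_m)=0$ for every $\rho\leq\rho_m$. Combined with $\rho_\eps(t)>0$, this gives $\dot\rho_\eps(t)\geq 0$ as long as $\rho_\eps(t)<\rho_m$, which is the claimed forward-$\rho_m$-boundedness from below. The strict inequality $\dot\rho_\eps(t)>0$ follows whenever $\kappa_m$ is genuinely decreasing, which holds, e.g., when the total-population dependence of $R$ is additively separated, $R(x,\rho)=R_0(x)+R_1(\rho)$ with $R_1$ strictly increasing, since then $\kappa_m(\rho)=\big(\inf_{\eps,\phi,y}\,\iint_{\R^2} K_\eps(x,y,z)\,dx\,\phi(z)\,dz-R_0(y)\big)-R_1(\rho)$.

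The one genuinely delicate point is upgrading $\geq 0$ to the strict $>0$ stated in the proposition: an infimum of strictly decreasing functions need not be strictly decreasing, so strictness must be extracted either from a separable structure of $R$ as above, or from the infimum in \eqref{hyp:Kinf} being attained (for instance when the relevant traits range over a compact set). The remaining ingredients — the Fubini reorganization and the finiteness of all the integrals involved — are routine and are handled exactly as in Proposition~\ref{rho:upperbound}, using $n_\eps(t,\cdot)\in L^1(\R)$ together with \eqref{hyp:Kbounded} (or directly the finiteness of $\kappa_m(\rho_m)$).
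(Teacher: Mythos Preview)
Your argument is correct and follows the same route as the paper's, only more carefully: the paper sketches the one-line lower bound $\eps\dot\rho_\eps(t)\geq \kappa_m(\rho)$ (omitting the factor $\rho_\eps(t)$ you correctly retain) and then appeals to the monotonicity of $\kappa_m$ in $\rho$, exactly as you do. You are also right to flag the gap between $\dot\rho_\eps\geq 0$ and the strict $\dot\rho_\eps>0$ claimed in the statement: the paper's own proof only derives $\dot\rho_\eps\geq 0$ (and even contains a sign slip, writing ``$\partial_\rho R\leq 0$'' and ``$\kappa_m$ non-decreasing'' where the opposite is meant), so your caveat about needing separability of $R$ or attainment of the infimum is a genuine refinement rather than a deviation.
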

 This again follows from  \eqref{eq:GenRho}, writing this time the lower control
\[
\eps\dot{\rho}_\eps(t) \geq \kappa_m (\rho) ,
\]
and remembering that $\p_\rho R(x,\rho)\leq 0$, we infer that $\kappa_m$ is non-decreasing which gives $\dot{\rho}_\eps(t) \geq 0$ for $\rho \leq \rho_m$.
\\

However, $\kappa_m (0) > 0$ is not expected to be a necessary condition. It is an open and challenging question to determine more general conditions for non-extinction, and study the set of extinction trajectories in cases when these conditions are not met.

For instance, a second non-extinction result is
\begin{equation}
\label{hyp:kappa1}
\kappa'_m (\rho) := \inf_{0<\eps \leq 1} \inf_{\phi \in \calM^1_+(\R)} \int_{\R} \big( \iint_{\R^2} K_\eps(x, y, z) dx \, \phi(z) dz  - R(y, \rho) \big) \phi(y) dy.
\end{equation}

\begin{proposition}[Lower bound for $\rho_\eps$ with a condition on \eqref{hyp:kappa1}]
Under assumption~\eqref{hyp:Rinc} and if there exists $\rho_m > 0$ such that $\kappa'_m (\rho_m) = 0$ then all trajectories of~\eqref{eq:resgeneralform} are forward-$\rho_m$-bounded from below in $\rho_\eps$.
\label{prop:nonext2}
\end{proposition}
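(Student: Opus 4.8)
\emph{Proof plan.} The argument runs in parallel to that of Proposition~\ref{prop:nonext}, the only difference being that the pointwise-in-$y$ infimum is replaced by an average against the normalized population density. First I would start from the identity~\eqref{eq:GenRho} for $\eps\dot\rho_\eps$ and, using that we work with positive solutions so that $\rho_\eps(t)>0$, introduce $\phi_\eps(t,\cdot):=n_\eps(t,\cdot)/\rho_\eps(t)\in\calM_+^1(\R)$. Applying Fubini in the birth term and factoring $\rho_\eps(t)$ out of the death term, one rewrites
\begin{equation*}
\eps\dot\rho_\eps(t)=\rho_\eps(t)\int_\R\Big(\iint_{\R^2}K_\eps(x,y,z)\,dx\,\phi_\eps(t,z)\,dz-R\big(y,\rho_\eps(t)\big)\Big)\phi_\eps(t,y)\,dy .
\end{equation*}
Since $\phi_\eps(t,\cdot)\in\calM_+^1(\R)$ and (in the regime of interest) $\eps\le1$, the integral on the right is bounded below by $\kappa'_m(\rho_\eps(t))$ by the very definition~\eqref{hyp:kappa1}, so that $\eps\dot\rho_\eps(t)\ge\rho_\eps(t)\,\kappa'_m(\rho_\eps(t))$.

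Next I would observe that $\kappa'_m$ is non-increasing: for fixed $\eps$ and $\phi$, the map $\rho\mapsto\int_\R\big(\cdots-R(y,\rho)\big)\phi(y)\,dy$ is decreasing because $\p_\rho R>0$ by~\eqref{hyp:Rinc} and $\phi\ge0$, and an infimum of non-increasing functions is non-increasing. Hence for $\rho_\eps(t)\le\rho_m$ one has $\kappa'_m(\rho_\eps(t))\ge\kappa'_m(\rho_m)=0$, which already gives $\dot\rho_\eps(t)\ge0$ whenever $\rho_\eps(t)<\rho_m$.

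To reach the strict inequality in the statement, I would split the bracket by inserting $R(y,\rho_m)$, writing it as $\big(\iint_{\R^2}K_\eps(x,y,z)\,dx\,\phi_\eps(t,z)\,dz-R(y,\rho_m)\big)+\big(R(y,\rho_m)-R(y,\rho_\eps(t))\big)$. The average of the first parenthesis against $\phi_\eps(t,y)\,dy$ is $\ge\kappa'_m(\rho_m)=0$, while for $\rho_\eps(t)<\rho_m$ the second parenthesis is strictly positive for every $y$ by~\eqref{hyp:Rinc}; integrating it against the probability measure $\phi_\eps(t,\cdot)$ therefore gives a strictly positive number. Combined with $\rho_\eps(t)>0$ this yields $\eps\dot\rho_\eps(t)>0$ whenever $\rho_\eps(t)<\rho_m$, i.e. forward-$\rho_m$-boundedness from below.

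The computation is essentially bookkeeping, and I do not expect a genuine obstacle: the one point requiring a little care is the upgrade from $\ge0$ to $>0$, handled by the splitting above, together with the (standing) positivity of $\rho_\eps$ that makes the normalization legitimate. This is in contrast with the general non-extinction question flagged after Proposition~\ref{prop:nonext}, which is left open.
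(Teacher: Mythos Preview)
Your proof is correct and follows exactly the route the paper takes: factor out $\rho_\eps$, set $\phi=n_\eps/\rho_\eps$, and bound the bracket from below by $\kappa'_m(\rho_\eps)$, then use the monotonicity of $\kappa'_m$ inherited from~\eqref{hyp:Rinc}. Your additional splitting step to upgrade $\dot\rho_\eps\ge 0$ to the strict inequality required by the definition is a detail the paper leaves implicit.
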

 This time we factorize $\rho_\eps$ in the integral of  \eqref{eq:GenRho}, and using the special choice $\phi = \frac{n_\eps}{\rho_\eps}$ we may write the lower control
\[
\eps\dot{\rho}_\eps(t) \geq  {\rho}_\eps(t) \iiint_{\R^3} K_\eps(x, y, z) dx \, \phi(z)  \phi(y) dx dy dz  - {\rho}_\eps(t) \int_{\R} R(x, \rho) \phi(x) dx \geq  {\rho}_\eps(t) \kappa'_m (\rho_\eps).
\]
Then, we conclude as before..
\\

And likewise,  assume that
\begin{equation}
\label{hyp:Rsup}
\begin{aligned}
\exists R_M : \R_+ \to \R_+, \text{ increasing, with } R_M(0) \geq 0, \\
R_M (+\infty) = +\infty \text{ and } \forall x, \quad R(x, \rho) \leq R_M (\rho),
\end{aligned}
\end{equation}
and
\begin{equation}
\kappa''_m := \inf_{0<\eps \leq 1} \inf_{\phi \in \calM^1_+(\R)} \iiint_{\R^3} K_\eps(x, y, z) \phi(y) \phi(z) \,dx \, dy \, dz > R_M(0).
\label{eq:kappa2m}
\end{equation}
Then, we have the
\begin{proposition}[Lower bound for $\rho_\eps$ with condition \eqref{eq:kappa2m}]
Assume \eqref{hyp:Rsup}  and \eqref{eq:kappa2m}. Then all trajectories of~\eqref{eq:resgeneralform} are forward-$\rho_m$-bounded from below in $\rho_\eps$, with $\rho_m = R_M^{-1} (\kappa''_m)>0$.
\label{prop:nonext3}
\end{proposition}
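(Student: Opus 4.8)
The plan is to follow the pattern of Propositions~\ref{prop:nonext} and~\ref{prop:nonext2}, using the $1$-homogeneity of the birth term to factor $\rho_\eps$ out of the equation~\eqref{eq:GenRho} governing the total mass. First I would introduce the renormalized profile $\phi_\eps(t,\cdot) := n_\eps(t,\cdot)/\rho_\eps(t)$, which belongs to $\calM^1_+(\R)$ for every $t$ as long as $\rho_\eps(t)>0$, and rewrite
\begin{equation*}
\eps\dot{\rho}_\eps(t) = \rho_\eps(t)\left( \iiint_{\R^3} K_\eps(x,y,z)\,\phi_\eps(t,y)\,\phi_\eps(t,z)\,dx\,dy\,dz - \int_\R R\big(x,\rho_\eps(t)\big)\,\phi_\eps(t,x)\,dx\right).
\end{equation*}
Then I would bound the two contributions separately: by the very definition~\eqref{eq:kappa2m} the triple integral is $\ge \kappa''_m$ (the infimum there ranges over all $0<\eps\le 1$ and all $\phi\in\calM^1_+(\R)$, hence in particular over $\phi_\eps(t,\cdot)$), while assumption~\eqref{hyp:Rsup} together with the unit mass of $\phi_\eps(t,\cdot)$ gives $\int_\R R(x,\rho_\eps(t))\,\phi_\eps(t,x)\,dx \le R_M(\rho_\eps(t))$. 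This yields the differential inequality
\begin{equation*}
\eps\dot{\rho}_\eps(t) \ge \rho_\eps(t)\big(\kappa''_m - R_M(\rho_\eps(t))\big).
\end{equation*}

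Next I would exploit the structure of $R_M$: since it is increasing with $R_M(+\infty)=+\infty$ and $\kappa''_m > R_M(0)\ge 0$ by~\eqref{eq:kappa2m}, the level $\{R_M(\rho)=\kappa''_m\}$ determines a well-defined value $\rho_m := R_M^{-1}(\kappa''_m)>0$ with $R_M(\rho)<\kappa''_m$ for all $\rho<\rho_m$. Feeding this into the inequality above, we obtain $\dot{\rho}_\eps(t)>0$ whenever $\rho_\eps(t)<\rho_m$, which is exactly the asserted forward-$\rho_m$-boundedness from below, and then I would conclude as in the previous propositions. A small preliminary point is that $\rho_\eps$ remains strictly positive for positive initial mass, so that $\phi_\eps$ is genuinely a probability density; this is immediate from the nonnegativity of $K_\eps$ and $n_\eps$ (or from the same inequality).

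The only real obstacle is bookkeeping: one must check that the renormalization $\phi_\eps(t,\cdot)$ is an admissible competitor in~\eqref{eq:kappa2m} at each time $t$ --- i.e. that it is a genuine probability measure with no loss of mass --- and, if $R_M$ is only non-strictly monotone, that the conclusion at the single value $\rho=\rho_m$ is replaced harmlessly by $\dot{\rho}_\eps\ge 0$. Apart from this, the argument is two lines once the homogeneity of the birth term is used, mirroring the proofs of Propositions~\ref{prop:nonext}--\ref{prop:nonext2}.
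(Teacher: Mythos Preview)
Your proposal is correct and follows exactly the paper's approach: the paper simply states the differential inequality $\dot{\rho}_\eps \geq \big( \kappa''_m - R_M(\rho_\eps) \big) \rho_\eps$ and concludes, which is precisely what you obtain by factoring out $\rho_\eps$ via the normalized profile $\phi_\eps$ and applying \eqref{hyp:Rsup} and \eqref{eq:kappa2m}. Your write-up is in fact more detailed than the paper's one-line argument.
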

This result follows from the same type of calculation, writing 
\[
\dot{\rho_\eps} \geq \big( \kappa''_m - R_M(\rho_\eps) \big) \rho_\eps .
\]

\section{The models without mutations}
\label{sub:simplecase}

In order to see clearly the kind of results to be expected, we first study in details a very simple example, which is equation \eqref{nomutation}.
The form of the birth rate assumes that the trait is perfectly transmitted from the females to their progeny, and the cross-fecundity between a male of trait $z$ and a female of trait $x$ depends only on the distance between $x$ and $z$ through $K_0$.

Assumptions~\eqref{hyp:Rinc} and \eqref{hyp:Rinf} (for $R_m = \nu \rho$) obviously hold in case \eqref{nomutation}. Assumption \eqref{hyp:Kbounded} holds with $K_M = \max_x K_0(x)$. Therefore we can apply Proposition~\ref{rho:upperbound} and the total population remains bounded $\rho_\eps (t) \leq \rho_M$.

We can also verify non-extinction. Since $\kappa_m (\rho) = \inf K_0 -\rho$ the non-extinction condition from Proposition~\ref{prop:nonext} holds if and only if $\inf_x K_0(x) > 0$. However, even when  $\inf_x K_0 (x) = 0$, following  \cite{JabRao2011} in the context of entropy-based stability, we may assume that $K_0$ is such that 
\[
    \kappa''_m = \inf_{\phi \in \calM^1_+(\R)} \int_{\R} \big( K_0 \ast \phi \big) (x) \phi(x) \,dx > 0,
\]
and then, the assumptions of Proposition~\ref{prop:nonext3} holds which gives a lower bound on $\rho_\eps$

\subsection{Proof of Theorem \ref{th:simple} and {\em BV} estimates}
\label{sec:BVsimple}

We begin with  proving the $BV$ bound, which is the main result from which the other follow easily. To do so, we follow the strategy which is to first control the decay rate of $\rho_\eps$ as elaborated in~\cite{BarPer2008}. 
We depart from equation~\eqref{nomutation}, that we reformulate as 
\begin{equation} \label{eq:BVfund}
\eps \dot{\rho_\eps} (t) = \int_{\R} n_\eps(t, x) \frac{K_0 \ast n_\eps(t, \cdot)}{\rho_\eps(t)} (x) \,dx - \nu\rho_\eps^2.
\end{equation}
In order to differentiate it, we first use assumption~\eqref{assum:K0} which yields 
\[
\frac{d}{dt} \int_{\R} n K_0 \ast n = 2 \int_{\R}  n K_0 \ast (\p_t n),
\]
and consequently, we find from equation \eqref{eq:BVfund}
\[
\eps \ddot{\rho}_\eps = - \nu\rho_\eps \dot{\rho}_\eps - \nu\rho_\eps \dot{\rho}_\eps - \frac{\dot{\rho}_\eps}{\rho_\eps^2} \int_{\R} n_\eps K_0 \ast n_\eps + \frac{1}{2 \rho_\eps} \frac{d}{dt} \int_{\R} n_\eps K_0 \ast n_\eps + \frac{1}{\rho_\eps} \int_{\R} \p_t n_\eps K_0 \ast n_\eps.
\]
We rewrite this as
\begin{align*}
\eps \ddot{\rho}_\eps = - \nu\rho_\eps \dot{\rho}_\eps - 
\frac{\dot{\rho}_\eps}{2\rho_\eps^2} \int_{\R} n_\eps K_0 \ast n_\eps + \frac{1}{2} &
\frac{d}{dt} \Big(\frac{1}{\rho_\eps} \int_{\R} n_\eps K_0 \ast n_\eps - \nu\rho_\eps^2 \Big) 
\\& + \frac{1}{\eps} \int_{\R} \big( \frac{ n_\eps (K_0 \ast n_\eps)^2}{\rho_\eps^2} - \nu 
n_\eps K_0 \ast n_\eps \big).
\end{align*}
Inserting the equation \eqref{eq:BVfund} in this equality, we get
\begin{equation}
\frac{\eps}{2} \ddot{\rho}_\eps =  - \frac{\dot{\rho}_\eps}{2 \rho_\eps^2} \int_{\R} n_\eps K_0 \ast n_\eps + \frac{1}{\eps} \int_{\R} n_\eps \big( \frac{K_0 \ast n_\eps}{\rho_\eps} - \nu\rho_\eps \big)^2.
\label{eq:simpleddrho}
\end{equation}

Several conclusions follow from \eqref{eq:simpleddrho}. Firstly, $\ddot{\rho}_\eps \geq - \frac{\dot{\rho}_\eps}{\eps \rho_\eps^2} \int_{\R} n_\eps K_0 \ast n_\eps$, hence if $\dot{\rho}_\eps = 0$ then $\ddot{\rho}_\eps \geq 0$. In particular, $\rho_\eps$ has no strict local maximum. We can conclude that $\rho_\eps$ is either decreasing, increasing or decreasing-increasing, and since it is bounded, $\rho_\eps(t)$ must converge to some finite value $\rho_\eps^{\infty}$ as $t$ goes to $+\infty$.

Secondly, let $b_{\eps}(t) := \frac{1}{\rho_{\eps}^2(t)} \int_{\R} n_{\eps} (t, x) (K_0 \ast n_{\eps} (t, \cdot) ) (x) dx \geq \kappa''_m > 0$. Then from~\eqref{eq:simpleddrho},
\[
\frac{d}{dt} (\dot{\rho}_{\eps})_- \leq - \frac{\kappa''_m}{\eps} (\dot{\rho}_{\eps})_-.
\]
As a consequence we can control the decay of $\dot{\rho}_\eps$ thanks to the inequality  $(\dot{\rho}_{\eps})_- (t) \leq e^{-\frac{\kappa''_m t}{\eps}} (\dot{\rho}_{\eps})_- (0)$ which is bounded thans to assumption~\eqref{assum:rho_0}.
Next, to control the $BV$ norm, we use the upper bound $\rho_M$ mentioned above, and write
\begin{align*}
    \int_0^T \lvert \dot{\rho}_{\eps} (t) \rvert \,dt &\leq \int_0^T \dot{\rho}_{\eps} (t) \,dt + 2 \int_0^T (\dot{\rho}_{\eps})_- \,(t) dt
    \\
    &\leq \rho_M + 2 (\dot{\rho}_{\eps})_-(0) \int_0^T e^{- \frac{\kappa''_m t}{\eps}} dt
    \\
    &\leq \rho_M + 2 (\dot{\rho}_{\eps})_-(0) \frac{\eps}{\kappa''_m} \big( 1 - e^{-\frac{\kappa''_m T}{\eps}} \big).
\end{align*}
Therefore, under the mild assumption~\eqref{assum:rho_0} on the initial data, the family $(\rho_{\eps})_{\eps}$ is uniformly bounded in $BV(\R_+)$.
\\

We can now establish the second bound  in~\eqref{eq:concentration}. Going back to equation~\eqref{eq:simpleddrho}, and integrating it over $[0,T]$ for~$T>0$, we obtain
\begin{equation}
 \int^T_0\int_\R n_\eps \big( \frac{K_0 \ast n_\eps}{\rho_\eps} - \nu\rho_\eps \big)^2  dx\,dt=  \eps\int_0^T\frac{\dot{\rho}_\eps}{2 \rho^2} \int_{\R} n_\eps K_0 \ast n_\eps \,dx\,dt+ \frac{\eps^2}{2} (\dot{\rho}_\eps(T)-\dot{\rho}_\eps(0)).
\end{equation}
 Since we have already proved that $\rho_\eps$ is locally $BV$, uniformly in $\eps$, and using the assumptions  \eqref{assum:rho_0} and \eqref{assum:K0}, we deduce that
\begin{equation*}
 \int^T_0\int_\R n_\eps \big( \frac{K_0 \ast n_\eps}{\rho_\eps} - \nu\rho_\eps \big)^2  dx\,dt=  O(\eps),
\end{equation*}
which is the second bound in \eqref{eq:concentration}.

The other conclusions of Theorem \ref{th:simple} are  standard  function analytic consequences.

\subsection{Concentration in Dirac masses}
\label{sec:CDM}

We now comment on the consequences on the second bound  in~\eqref{eq:concentration}. 
Formally, at the limit $\eps \to 0$, the previous estimate yields
    \begin{equation}
    \int_{\R} n(t, x) \Big(\frac{K_0 \ast n(t,\cdot)}{\rho(t)}(x) - \nu\rho(t) \Big)^2 dx = 0.
    \label{eq:formal_concentration}
    \end{equation}
We may try to find  admissible solutions of \eqref{eq:formal_concentration} under the form of combinations of Dirac masses
\[
n = \sum_{i = 1}^N \rho_i \delta_{x_i},  \qquad \rho_i > 0, \quad \text{with} \quad\sum_{i=1}^N \rho_i = \rho.
\]
Inserting this expresion in \eqref{eq:formal_concentration}, we obtain
\[
    \sum_{i=1}^N \rho_i \Big(\sum_{j=1}^N \frac{\rho_j}{\rho} K_0 (x_i - x_j) - \nu\rho \Big)^2 = 0.
\]
In other words, we need to impose 
\begin{equation}
    \sum_{j=1}^N \frac{\rho_j}{\rho} K_0 (x_i - x_j) = \nu\rho \qquad \forall \; i=1,...,N.
    \label{cond:KDD}
\end{equation}

We define the matrix $\buK$ with entries of  indices $(i, j)$ 
given by  $K_0 (x_i - x_j)$ and our problem is reduced to finding a positive vector $P$ such that 
\[
\buK P = \mathds{1}, \quad \rho_i = \nu P_i \rho^2 \quad \text{and} \quad \rho = 1 / (\nu \mathds{1}^T P).
\]
Even though the matrix $\buK$ is symmetric with positive coefficients and constant main diagonal equal to $K_0 (0)$, and one can find invertibility conditions,  
%
it remains unclear whether $P := \buK^{-1} \mathds{1} > 0$ or not.

For this reason, an alternative viewpoint using a Lyapunov functional helps describing the asymptotically stable solutions, as detailed below.


\subsection{A Lyapunov concentration result: proof of Theorem \ref{thm:lyapunov}}
\label{ch3:subs:Lyapunov}

We consider the special form of equation \eqref{eq:resgeneralform} 
given by \eqref{eq:gennLyapunov}. Then, we consider an initial data $n^0 \in \calM_+ (\R)$ and define the probability measures 
\[ 
q(t,x) :=\frac{ n_{\eps} (\eps t, x)}{\rho_{\eps}(\eps t)}, 
 \quad \quad q^0 := \frac{n^0 }{\rho^0},
\]
There is a closed form equation for $q$, namely
\begin{equation}
\left\{
\begin{aligned}
 \p_t q(t,x) = \,& q(t,x) \big( \int_{\R} K_S(x,y) q(t,y) dy - R_0(x) \big) 
 \\
 &- q(t,x) \int_{\R} q(t,x') \big( \int_{\R} K_S(x',y) q(t,y) dy - R_0 (x') \big) dx', 
 \\
 q(0, x) = \, &q^0(x).
 \label{eq:genqLyapunov}
\end{aligned}
\right.
\end{equation}
Therefore we simply need to study the asymptotic behavior of $q$ as $t \to +\infty$ to be able 
to describe that of $n_{\eps}$ as $\eps \to 0$.

Notice that Equation \eqref{eq:genqLyapunov} has a replicator-type structure, as it can be written under the form
\begin{equation*}
\p_t q(t,x) =  q(t,x) \left[\mathcal{G}(x, q(t,\cdot)) - \int_\R q(t,x) \mathcal{G}(x, q(t,\cdot)) dx \right], 
\end{equation*}
where $\mathcal{G}(x, q(t,\cdot)) := \int_{\R} K_S(x,y) q(t,y) dy$ is the effective growth rate, or fitness, of 
$q(t,x)$ and the integral term $\int_\R q(t,x)\mathcal{G}(x, q(t,\cdot)) dx$ the average of this growth rate
in the population. This structure means that the frequency of the $x$-carrying individuals in the population evolves with
the deviation of its corresponding fitness from the mean fitness in the population.

\bigskip

Thanks to the structure of \eqref{eq:genqLyapunov}, we obtain the asymptotic stability of~$\delta_{x_M}$ 
stated in Theorem~\ref{thm:lyapunov}, by using the Lyapunov method for stability.
The rest of this section is devoted to the proof of this result, 
which relies on a strict Lyapunov stability argument
stated in \cite[Theorem 4.4]{Sandholm2001} and~\cite{Sandholm2010}.
This type of convergence result has appeared in the economic 
literature devoted to game theory with continuous strategy 
space, which we denote here by $\calP$ (for ``phenotype''). 
For instance it is stated in \cite[Theorem 3.a]{Cheung2016} 
and follows from \cite[Theorem 2]{Cheung2014}.

First, we get a Lyapunov functional for \eqref{eq:genqLyapunov} by defining
\begin{equation}
 J(q) := \f{1}{2} \iint_{\R^2} K_S(x,y)q(x)q(y) dx dy - \int_{\R} R_0 (x) q(x) dx.
 \label{eq:Lyapunov}
\end{equation}
Indeed, along an orbit of \eqref{eq:genqLyapunov} we have, thanks to \eqref{assumption:Ksym},
\begin{align*}
 \ddt J(q(t,\cdot)) = &\int_{\R} q(t,x) \big( \int_{\R} K_S(x,y) q(t,y) dy - R_0 (x) \big)^2 dx 
 \\
 &- \Big( \int_{\R} q(t,x) \big( \int_{\R} K_S(x,y) q(t, y) dy - R_0 (x) \big) dx \Big)^2 \geq 0,
\end{align*}
with equality (by the Cauchy-Schwarz inequality) if and only if 
\[
 \int_{\R} K_S(x, y) q(t, y) dy - R_0 (x) \equiv C \in \R \text{ on } \supp(q(t,\cdot)),
\]
so that we have strict monotonicity except if $q(t, \cdot)$ is a rest point for the dynamics of \eqref{eq:genqLyapunov}.
This Lyapunov functional can be seen as an embodiment of the ``positive correlation'' property from game dynamics (see \cite{HofSig2003}), and this feature has been exploited to get a gradient flow formulation of a non-local model with a diffusion term in \cite{JabLiu2017}, where the kernel acts for death induced by competition rather than for birth as is the case here.

In order to use the Lyapunov functional properly, we need $\{ q(t, \cdot), \, t \geq 0 \} \subset \calM_+^1 (\R)$ to be relatively compact for a topology for which $J$ is continuous and Fréchet-differentiable.
This is the case if either $q^0$ is compactly supported in $\R$, or 
if $K$ is bounded and $R_0$ is proper (by Prokhorov's 
theorem), for the weak-$\star$ topology on $\calM_+^1 (\R)$, hence the use of \eqref{assumption:R0}

The study of the maximizer sets for $J$ is greatly simplified by \eqref{assumption:Kpos}:
\begin{lemma}
 Under \eqref{assumption:Kpos}, the functional $J$ is strictly convex on the convex set $\calM_+^1(\R)$.
 
 Therefore its local maximum points are extreme points of $\calM_+^1 (\R)$, that is Dirac masses. The Dirac mass $\delta_x$ is a local maximizer of $J$ only if $y \mapsto K_S(x,y) - R_0 (y)$ reaches its maximum at $x$.
 \label{lem:convex}
\end{lemma}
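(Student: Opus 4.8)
The plan is to prove the two assertions of Lemma~\ref{lem:convex} separately: first the strict convexity of $J$ on $\calM_+^1(\R)$, then the structural consequence for local maximizers.

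\textbf{Step 1: strict convexity.} I would write $J(q) = \frac12 Q(q,q) - \ell(q)$ where $Q(q,p) := \iint_{\R^2} K_S(x,y) q(x) p(y)\,dx\,dy$ is a symmetric bilinear form and $\ell(q) := \int_\R R_0(x) q(x)\,dx$ is linear. Since $\ell$ is affine it plays no role in convexity, so it suffices to show $q \mapsto \frac12 Q(q,q)$ is strictly convex along segments of $\calM_+^1(\R)$. For $q_0 \neq q_1$ in $\calM_+^1(\R)$ and $\theta \in (0,1)$, expanding $Q(\theta q_1 + (1-\theta) q_0, \cdot)$ bilinearly gives the standard identity
\begin{equation*}
\tfrac12 Q(q_\theta, q_\theta) = \theta\, \tfrac12 Q(q_1,q_1) + (1-\theta)\, \tfrac12 Q(q_0,q_0) - \tfrac12 \theta(1-\theta)\, Q(q_1 - q_0, q_1 - q_0),
\end{equation*}
with $q_\theta := \theta q_1 + (1-\theta) q_0$. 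Now $\xi := q_1 - q_0$ is a nonzero signed measure of total mass zero, in particular $\xi \in \calM_+(\R) \setminus\{0\}$ in the sense that $\xi \neq 0$; but assumption~\eqref{assumption:Kpos} as stated applies to nonnegative measures, so the point requiring care is that $Q(\xi,\xi) > 0$ for nonzero \emph{signed} $\xi$. The cleanest route is to note that $J$ being affine-plus-quadratic, strict convexity on the segment is equivalent to $Q(\xi,\xi) > 0$, and this positivity on signed measures is exactly what \eqref{assumption:Kpos} is meant to encode (a genuinely positive-definite kernel form); I would either restate \eqref{assumption:Kpos} for signed measures or remark that by a Jordan decomposition and an approximation argument the stated hypothesis on nonnegative measures already forces strict positivity on mass-zero signed measures. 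This is the main obstacle: making the passage from nonnegative to signed test measures rigorous without an extra hypothesis. Granting it, $\tfrac12 Q(q_\theta,q_\theta) < \theta\,\tfrac12 Q(q_1,q_1) + (1-\theta)\,\tfrac12 Q(q_0,q_0)$, hence $J$ is strictly convex on $\calM_+^1(\R)$.

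\textbf{Step 2: local maxima are extreme points.} A strictly convex function on a convex set cannot attain a local maximum at a point that is a nontrivial convex combination of two other points of the set: if $q = \theta q_1 + (1-\theta) q_0$ with $q_0 \neq q_1$, $\theta \in (0,1)$, then $J(q) < \max\{J(q_0), J(q_1)\}$, and since $q_0, q_1$ can be chosen arbitrarily close to $q$ (any point in the relative interior of a small segment through $q$ inside $\calM_+^1(\R)$), $q$ cannot be a local maximizer. Therefore a local maximizer must be an extreme point of $\calM_+^1(\R)$, and it is classical that the extreme points of the set of probability measures on $\R$ are exactly the Dirac masses $\delta_x$.

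\textbf{Step 3: the first-order condition at a Dirac maximizer.} Suppose $\delta_x$ is a local maximizer of $J$. For $y \in \R$ and $\theta \in [0,1]$ consider the admissible curve $q_\theta := (1-\theta)\delta_x + \theta \delta_y \in \calM_+^1(\R)$, which stays arbitrarily close to $\delta_x$ for small $\theta$. Then $\frac{d}{d\theta}\big|_{\theta = 0^+} J(q_\theta) \le 0$ by local maximality. Computing this directional derivative from \eqref{eq:Lyapunov} and using the symmetry \eqref{assumption:Ksym},
\begin{equation*}
\frac{d}{d\theta}\Big|_{\theta=0^+} J(q_\theta) = \big( K_S(x,y) - R_0(y) \big) - \big( K_S(x,x) - R_0(x) \big) \le 0,
\end{equation*}
so $K_S(x,y) - R_0(y) \le K_S(x,x) - R_0(x)$ for every $y$, i.e. $y \mapsto K_S(x,y) - R_0(y)$ attains its maximum at $y = x$. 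This is exactly the stated necessary condition, completing the proof.
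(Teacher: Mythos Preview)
Your proof follows essentially the same route as the paper's: the same bilinear expansion for convexity, the same segment argument to force extreme points, and the same first-order variation at a Dirac (the paper uses general perturbations $h=-\delta_x+h_0$ with $h_0\in\calM_+^1(\R)$, you use the particular $h_0=\delta_y$, which suffices).

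One remark on the point you flag: your concern that \eqref{assumption:Kpos} is stated for $\xi\in\calM_+(\R)$ but must be applied to the signed $\xi=q_1-q_0$ is legitimate, and the paper's own proof makes exactly the same silent leap. The right fix is the one you list first, namely reading \eqref{assumption:Kpos} as positive-definiteness on nonzero signed measures (or at least on differences of probability measures). Your alternative suggestion---recovering this from positivity on nonnegative measures via Jordan decomposition and approximation---does not work: for a nonnegative continuous kernel the inequality $\iint K_S\,\xi\otimes\xi>0$ on $\calM_+(\R)\setminus\{0\}$ is nearly automatic and says nothing about the sign of $Q(\xi_+,\xi_+)-2Q(\xi_+,\xi_-)+Q(\xi_-,\xi_-)$.
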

\begin{proof}
 For $q_1, q_2 \in \calM_+^1(\R)$ and $\theta \in [0, 1]$ we compute
 \begin{align*}
  J(\theta q_1 + (1 -\theta) q_2) &= \iint_{\R^2} K_S \big( \theta^2 q_1 q_1 + (1 - \theta)^2 q_2 q_2 +  2 \theta (1 - \theta) q_1 q_2 \big) - \int_{\R} R_0 \big( \theta q_1 + (1 - \theta) q_2 \big)
  \\
  &= \theta J(q_1) + (1 - \theta) J(q_2) - \theta (1 - \theta) \iint_{\R^2} K_S(x,y) (q_1 -q_2)(x) (q_1 - q_2) (y) dx dy.
 \end{align*}
 Therefore \eqref{assumption:Kpos} (with $\xi = q_1 - q_2$) implies that $J$ is strictly convex.
 
 If $J$ reaches a local maximum at $\xi \in \calM_+^1 (\R)$ belonging to some interval $(\xi_-, \xi_+)$, that is $\xi = \theta \xi_- + (1 - \theta) \xi_+$ for some $\theta \in (0, 1)$ with $\xi_{\pm} \in \calM_+^1 (\R)$, then for $\eps > 0$ small enough we have
 \[
  J(\xi) < \f{1}{2} \big( J(\xi + \eps (\xi_+ - \xi_-)) + J(\xi - \eps (\xi_+ - \xi_-))\big) \leq J(\xi),
 \]
 where the left inequality holds by strict convexity and the right one by the local maximum condition. This is absurd, hence local maxima are only reached at extreme points.
 
 The support of an extreme probability measure must be reduced to a singleton: otherwise, we can construct a segment on which the measure lies by exchanging mass between any two separable points of the support. Conversely, a Dirac mass is obviously extreme, as any segment to which it belongs would consist of probability measures with the same support, reduced to a singleton.

 Then, the first-order optimality condition for $J$ at $\delta_x$ reads: for all admissible perturbation $h$,
 \[
  \int_{\R} \big( K(x,y) - R_0 (y) \big) h(y) dy \leq 0,
 \]
 and admissible perturbations have the general form $h = - \delta_x + h_0$, with $h_0 \in \calM_+^1 (\R)$, whence the last point.

\end{proof}

Thanks to \eqref{assumption:xM} we get that $\{\delta_{x_M} \}$ is a local 
maximizer set of~$J$ for which~$J$ is a strict Lyapunov function 
(and that there is no other local maximizer set of~$J$). Then, it follows
that $\{\delta_{x_M} \}$ is asymptotically stable.

\section{Models with mutations}\label{section:mutations}

To take into account mutations lead to much more elaborate tools that extend the methodology set in the previous section.
Our main results are $BV$ estimates stated in Propositions \ref{th_asfecundity} and \ref{th3}, which derivations can be understood in view of the simpler case in Section~\ref{sec:BVsimple}.
 
We begin with  $BV$ estimates on $\rho_\eps$ assuming that $R(x,\rho)=\nu \rho$ and then we address the difficulties encountered when $R$ has a general form and finally the Lyapunov method. .

\subsection{{\em BV} estimate for the AF model. Proof of Proposition  \ref{th_asfecundity}}
\label{ch3:estimBV}

Although the asymptotic behavior of $n_\eps$ solution to~\eqref{eq:resgeneralform} may be difficult to obtain in general, under some assumptions on $K$ and $R$, the total population $\rho_\eps$ can be proved to have bounded variations.

\medskip

Recall that, integrating equation \eqref{eq:resgeneralform}, we have
\[
\eps \dot{\rho_\eps} = \frac{1}{\rho_\eps} \iiint_{\R^3} K_{\eps}(x, y, z) n_\eps(t, y) 
n_\eps(t, z) dx \,dy \,dz - \int_\R R(x, \rho_\eps) n_\eps(t, x) \,dx.
\]
The proofs of Propositions \ref{th_asfecundity} and \ref{th3} rely on estimates 
obtained through the equation satisfied by~$\ddot{\rho}_\eps$. In general, we 
start from
\begin{equation}
\begin{aligned}\label{eq:ddrho}
\eps \ddot{\rho}_\eps = &- \frac{\dot{\rho}_\eps}{\rho_\eps^2} \iiint_{\R^3} K_{\eps}(x, y, z) 
n_\eps(t, y) n_\eps(t, z) dx \,dy \,dz \\
&+ \frac{1}{\rho_\eps}\iiint_{\R^3} K_{\eps}(x, y, z) \big( 
\p_t n_\eps(t, y) n_\eps(t, z) + n_\eps(t, y) \p_t n_\eps(t, z) \big) dx \,dy \,dz \\ 
&-\dot{\rho}_\eps \int_\R \p_{\rho} R (x, \rho) n_\eps(t, x) dx 
\\
&- \frac{1}{\eps}\int_\R R(x, \rho_\eps) \Big(\frac{1}{ \rho_\eps} \iint_{\R^2} K_{\eps}(x, y, z) n_\eps(t,y) 
n_\eps(t, z) dy \,dz - R(x, \rho_\eps) n_\eps(t, x) \Big) dx.
\end{aligned}
\end{equation}

\begin{proof}[Proof of Proposition \ref{th_asfecundity}]
We treat the case of the model with asymmetric fecundity. 
Then, $\rho_\eps$ satisfies 
\begin{equation*}
\eps \dot{\rho}_\eps = \int_{\R} B(x)  n_\eps(t, x) dx - \nu \rho_\eps^2,
\end{equation*}
and \eqref{eq:ddrho} reads
\begin{align*}
\eps \ddot{\rho} = &\int B(x) \p_t n_\eps(t, x) dx - 2 \nu \rho_\eps \dot{\rho}_\eps
\\
= &- \nu \rho_\eps \dot{\rho}_\eps + \frac{\nu^2}{\eps} \rho_\eps^3 - \frac{\nu \rho_\eps}{\eps} \int B(x) n_\eps(t, x) dx 
\\
&+ \frac{1}{\eps \rho_\eps} \iiint \alpha_{\eps}(x, y, z) B(x) B(y) n_\eps(t, y) n_\eps(t, z) dx \,dy \,dz - \frac{\nu \rho_\eps}{\eps} \int B(x) n_\eps(t, x) dx.
\end{align*}
We rewrite the last equation as
\begin{multline}
\eps \frac{d}{dt}\dot{\rho}_\eps =
 - \nu \rho_\eps \dot{\rho}_\eps + \overbrace{\frac{\rho_\eps}{\eps} \left( \frac{\int B(x) n_\eps(t, x) dx}{\rho_\eps} - \nu \rho_\eps \right)^2}^{\text{ demographic stabilization }}
 \\
 + \underbrace{\frac{1}{\eps \rho_\eps} \Big( \iiint \alpha_{\eps}(x, y, z) B(x) B(y) n_\eps(t, y) n_\eps(t, z) dx \,dy \,dz - \big( \int B(x) n_\eps(t, x) dx \big)^2 \Big)}_{\text{ mixing-induced fecundity variation }}.
\label{eq:ddrhoC2}
\end{multline}

In order to apply the same technique as for the simple case~\eqref{nomutation} in Section~\ref{sec:BVsimple}, 
we need to assume that the mixing-induced fecundity variation 
term is bounded from below. Under assumption~\eqref{assum:alphae}, we obtain from~\eqref{eq:ddrhoC2} and Proposition~\ref{rho:upperbound}
\begin{equation}
\eps \frac{d}{dt} \dot{\rho}_{\eps} \geq - \nu \rho_{\eps} \dot{\rho}_{\eps} - C.
\label{eq:ddrhoC2bis}
\end{equation}
From Proposition~\ref{prop:nonext3}, we deduce
\[
\frac{d}{dt} ( \dot{\rho}_{\eps} )_- \leq - \frac{\nu\rho_m}{\eps} (\dot{\rho}_{\eps})_- + \frac{C}{\eps},
\]
and thus 
\begin{equation*}
    (\dot{\rho}_{\eps})_- (t) \leq e^{-\frac{\nu\rho_m t}{\eps}}  (\dot{\rho}_{\eps})_- (0) + \frac{C}{\nu\rho_m} \big(1 - e^{-\frac{\nu\rho_m t}{\eps}} \big). 
\end{equation*}
Then we use the same argument as in the case without mutations, which proves uniform boundedness of $\rho_{\eps}$ in $BV(0, T)$ for all $T > 0$.
\end{proof}

\subsection{ {\em BV} estimate for the ATH model. Proof of Proposition \ref{th3}}

We now address the model with asymmetric trait 
heredity~\eqref{astrait}.

 Then we compute
\begin{align*}
\Big\lvert \int_\R \psi(x) ( G_{\eps} \ast \phi ) (x) dx - \int_\R \psi(x) 
\phi(x) dx \Big\rvert 
&\leq \int_\R \lvert \psi(x) \rvert \lvert G_{\eps} \ast \phi 
(x) - \phi(x) \rvert dx \\
&\leq \iint_{\R^2} \frac{1}{(2 \pi \eps^2)^{1/2}} e^{-\frac{(x-y)^2}{2 \eps^2}} \lvert 
\phi(y) - \phi(x) \rvert dy\, dx.
\end{align*}
We apply the change of variables $\hat{y} =  \eps^{-1} (y - x)$, then $d\hat{y} = \eps^{-1} dy$, and we obtain
\begin{align*}
\Big\lvert \int_\R \psi(x) ( G_{\eps} \ast \phi ) (x) dx - \int_\R \psi(x) 
\phi(x) dx \Big\rvert& \leq (2\pi)^{-1/2} \iint_{\R^2} e^{-\hat{y}^2/2} \lvert \phi(x +  \eps \hat{y}) - \phi(x) \lvert d\hat{y} \,dx \\
&\leq \frac{2 \lVert \phi' \rVert_{L^1}}{(2 \pi)^{1/2}} \eps.
\end{align*}

\medskip

\begin{proof}[Proof of Proposition \ref{th3}]

Departing from \eqref{astrait}, the equation satisfied by $\rho_\eps$ reads
\begin{equation*}
\eps \ddt \rho_\eps (t)= \int_\R \left(\frac{1}{\rho_\eps(t)}K \ast n_\eps(t,\cdot) (x)G_\eps\ast n_\eps(t,\cdot)(x)- \nu \rho_\eps(t) n_\eps(t,x)\right)dx.
\end{equation*}
Differentiating this equation, we obtain
\begin{align*}
\eps \ddot{\rho_\eps}(t)=\, &\frac{1}{\rho_\eps(t)}\int_\R \Big[K_0 \ast 
\p_t n_\eps(t,\cdot) (x)G_\eps\ast n_\eps(t,\cdot)(x) 
+ K_0 \ast  n_\eps(t,\cdot) (x)G_\eps\ast \p_t n_\eps(t,\cdot)(x)\Big]dx\\
&-\nu \rho_\eps(t)\dot{\rho}_{\eps}(t) -  \nu \int_\R \p_t n_\eps(t,x)\rho_\eps(t)dx\\
&- \frac{\dot{\rho}_\eps(t)}{\rho_\eps^2(t)} \int_\R \left[ K_0 \ast n_\eps(t,\cdot) (x)
G_\eps\ast n_\eps(t,\cdot)(x)\right]dx.
\end{align*}
By the same trick as in Section \ref{sub:simplecase}, assuming \eqref{assum:K0} induces
\begin{equation*}
\begin{aligned}
\eps \ddot{\rho_\eps}(t) =\,&\frac{1}{2\rho_\eps(t)}\ddt \left[\int_\R K_0 \ast n_\eps(t,\cdot) (x)G_\eps\ast n_\eps(t,\cdot)(x)dx\right]\\
&+\frac{1}{\rho_\eps(t)}\int_\R \left[G_\eps\ast(K_0 \ast n_\eps(t,\cdot)) (x) \p_t n_\eps(t,x) \right]dx\\
&- \nu \rho_\eps(t)\dot{\rho}_{\eps}(t)- \nu \int_\R \p_t n_\eps(t,x)\rho_\eps(t)dx\\
&-\frac{\dot{\rho}_\eps(t)}{\rho_\eps^2(t)} \int_\R \left[ K_0 \ast n_\eps(t,\cdot) (x)G_\eps\ast n_\eps(t,\cdot)(x)\right]dx.
\end{aligned}
\end{equation*}
Then we compute
\begin{align*}
\eps \ddot{\rho_\eps}(t) =&- \nu \rho_\eps(t)\dot{\rho}_{\eps}(t) + \frac{1}{2\rho_\eps(t)}\ddt \left[\int_\R K_0 \ast 
n_\eps(t,\cdot) (x)G_\eps\ast n_\eps(t,\cdot)(x)dx\right]\\
&+\frac{1}{\eps \rho_\eps(t)}\int_\R G_\eps\ast(K_0 \ast n_\eps(t,\cdot)) 
(x)\left[\frac{1}{\rho_\eps(t)}K_0 \ast n_\eps(t,\cdot) (x)G_\eps \ast 
n_\eps(t,\cdot)(x)- \nu n_\eps(t,x)\rho_\eps(t) \right]dx\\
&-\frac{\nu}{\eps}\rho_\eps(t) \int_\R \left[ 
\frac{1}{\rho_\eps(t)}K_0 \ast n_\eps(t,\cdot) (x)G_\eps\ast n_\eps(t,\cdot)(x)- \nu 
n_\eps(t,x)\rho_\eps(t)\right] dx\\
&-\frac{\dot{\rho}_\eps(t)}{\rho_\eps^2(t)} \int_\R \left[ K_0 \ast n_\eps(t,\cdot) 
(x)G_\eps\ast n_\eps(t,\cdot)(x)\right]dx,
\end{align*}
and get
\begin{align*}
\eps \ddot{\rho_\eps}(t) = &-\nu \rho_\eps(t)\dot{\rho}_{\eps}(t)+\frac{1}{2}\ddt 
\left[\int_\R \frac{1}{\rho_\eps(t)}K_0 \ast n_\eps(t,\cdot) (x)G_\eps\ast 
n_\eps(t,\cdot)(x)dx\right]\\
&-\frac{1}{2}\frac{\dot{\rho}_\eps(t)}{\rho_\eps^2(t)} \int_\R \left[ K_0 \ast 
n_\eps(t,\cdot) (x)G_\eps\ast n_\eps(t,\cdot)(x)\right]dx\\
&+\frac{1}{\eps} \int_\R( G_\eps \ast n_\eps) \left[G_\eps \ast (K_0 \ast n_\eps)\frac{(K_0 
\ast n_\eps)}{\rho_\eps^2}-2 \nu K_0 \ast n \right]dx \\
&+ \frac{1}{\eps} \nu^2 \rho_\eps^2 
\int_\R G_\eps \ast n_\eps \,dx.
\end{align*}
We rewrite this as
\begin{align*}
\eps \ddot{\rho_\eps} (t) = &- \nu \rho_{\eps}(t) \dot{\rho}_{\eps}(t) 
- \frac{1}{2} 
\frac{\dot{\rho}_{\eps} (t)}{\rho_{\eps}^2 (t)} \int_\R K_0 \ast n_{\eps} (t, 
\cdot)G_\eps\ast n_\eps(t,\cdot)\\
&+ \frac{1}{\eps}\int_\R (G_\eps \ast 
n_\eps)\left[\frac{(K_0 \ast n_\eps)}{\rho_\eps}-\nu \rho_\eps \right]^2 dx
\\
&+\frac{1}{\eps \rho_{\eps}^2 (t)} \int_\R (K_0 \ast n_{\eps} )(G_{\eps} \ast n_{\eps}) 
\Big( G_{\eps} \ast (K_0 \ast n_{\eps}) - K_0 \ast n_{\eps} \Big) dx.
\end{align*}

Now we use the convergence assumption \eqref{hyp:conv_ge} on $G_\eps$. 
We simply need to check that $\phi (x) := \int K_0 (x - y) n_{\eps} (t, y) dy$ 
is in $W^{1,1}$. This is obvious since $ \phi ' =  K'_0 \ast n_{\eps} $. 
Hence we have
\begin{equation}
\begin{aligned}
\label{rho_snd}
\frac{\eps}{2}\ddot{\rho_\eps}(t) =&-\nu \rho_{\eps} (t) \dot{\rho}_{\eps} (t) -\frac{1}{2}\frac{\dot{\rho}_\eps(t)}{\rho_\eps^2(t)} \int_\R K_0 \ast n_\eps(t,\cdot)G_\eps\ast n_\eps(t,\cdot)\\
&+ \frac{1}{\eps}\int_\R (G_\eps \ast n_\eps)\left[\frac{(K_0 \ast n_\eps)}{\rho_\eps}-\nu \rho_\eps \right]^2 dx + O(1).
\end{aligned}
\end{equation}

Thanks to \eqref{cond:nonextC4}, we deduce the inequality
\begin{equation*}
\frac{\eps}{2} \ddt (\dot{\rho}_\eps(t))_- \leq -\big(\frac{1}{2}\eta_0+ \nu \rho_{\eps} (t) \big) (\dot{\rho}_\eps(t))_- + O(1).
\end{equation*}
Then, we conclude that $\rho_\eps$ is bounded in $BV_{\text{loc}} (\R_+)$ uniformly in $\eps$. Indeed, we obtain that for some constants $C_1, C_2 > 0$,
\[
(\dot{\rho}_{\eps}(t))_- \leq e^{-C_1 t / \eps} \Big( (\dot{\rho}_{\eps} (0))_- + \frac{C_2}{\eps} \int_0^t e^{C_1 t'/\eps} dt' \Big),
\]
hence
\[
(\dot{\rho}_{\eps}(t))_- \leq (\dot{\rho}_{\eps} (0))_- e^{-C_1 t / \eps} + \frac{C_2}{C_1} \big( 1 - e^{-C_1 t / \eps} \big).
\]

As in the proof of Theorem \ref{th:simple}, we deduce that for all 
$T > 0$, $(\rho_{\eps})_{\eps}$ is uniformly in $\eps$ bounded 
in $BV([0, T])$ with assumption \eqref{assum:rho_0} on the initial data.
Going back to \eqref{rho_snd}, we derive the estimate, for $T>0$,
\begin{equation*}
\int_0^T\int_\R (G_\eps \ast n_\eps) \left[\frac{K_0 \ast n_\eps}{\rho_\eps}- \nu \rho_\eps\right]^2dx\,dt=O(\eps),
\end{equation*}
as in the proof of Theorem \ref{th:simple}.

\end{proof}

\subsection{Extensions and open questions for the general case}

As a  first possible extension, we address the case of a general saturation term for the AF model, 
featuring the competition effect and the trait-dependency:
\begin{equation}
R \in \mathcal{C}^1(\R^d \times \R_+; \R_+), \quad K(x,y,z) = B(y) \alpha_{\eps}(x, y, z), \quad \forall y, z, \, \int_\R \alpha_{\eps}(x,y,z) dx = 1.
\label{cond:C43}
\end{equation}

To apply the same argument as before, we need to assume
\begin{equation}
\begin{aligned}
&\exists C > 0, \, \forall \eps > 0, \, \forall y, z, \, \forall \phi \in L^1_+ \text{ with } \lVert \phi \rVert_{L^1} = 1,
\\
&\big\lVert \iint_{\R^2} \alpha_{\eps} (\cdot,y,z) B(y) \phi(y) \phi(z) dy dz - B(\cdot) \phi(\cdot) \big\rVert_{L^1} \leq C \eps,
\label{assum:alphaC4}
\end{aligned}
\end{equation}
and also
\begin{equation}
\forall \rho \leq \rho_M, C_f (\rho) := \lVert B(\cdot) - R(\cdot, \rho) \rVert_{\infty} < \infty, \quad \overline{C_f}=\sup_{0\leq\rho\leq\rho_M} C_f (\rho).
\label{assum:boundnetfitness}
\end{equation}

We are going to establish the following $BV$ estimate
\begin{corollary} \label{cor:BV_AF}
Assume \eqref{cond:C43}, \eqref{assum:alphaC4} and \eqref{assum:boundnetfitness}. Then, for all $T>0$, $(\rho_{\eps})_{\eps}$ is uniformly in $\eps$ bounded in~$BV([0, T])$.
\end{corollary}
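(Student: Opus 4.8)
The plan is to mimic the proof of Proposition \ref{th_asfecundity}, but keeping track of the extra terms that appear because $R$ now depends on the trait variable $x$ and not only on $\rho_\eps$. First I would write the equation for $\rho_\eps$ in the AF case with general $R$:
\[
\eps\dot\rho_\eps(t) = \frac{1}{\rho_\eps(t)}\int_\R B(y)n_\eps(t,y)\,dy \cdot \rho_\eps(t) \cdot \frac{1}{\rho_\eps(t)}\cdots
\]
more precisely, since $\int_\R\alpha_\eps(x,y,z)\,dx=1$, integrating \eqref{asfecundity} in $x$ gives
\[
\eps\dot\rho_\eps(t) = \int_\R B(y)\,n_\eps(t,y)\,dy - \int_\R R(x,\rho_\eps(t))\,n_\eps(t,x)\,dx.
\]
Then I would differentiate once more, using the general formula \eqref{eq:ddrho} specialized to $K_\eps(x,y,z)=B(y)\alpha_\eps(x,y,z)$, to obtain an expression for $\eps\ddot\rho_\eps$. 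The novelty compared with Proposition \ref{th_asfecundity} is the term $-\dot\rho_\eps\int_\R \p_\rho R(x,\rho_\eps)n_\eps(t,x)\,dx$, which by \eqref{hyp:Rinc} has the favorable sign when $\dot\rho_\eps<0$, and the mixing term, which now reads
\[
\frac{1}{\eps\rho_\eps}\Big(\iiint \alpha_\eps(x,y,z)B(x)B(y)n_\eps(y)n_\eps(z)\,dx\,dy\,dz - \big(\textstyle\int B\,n_\eps\big)^2\Big) + (\text{correction from }R(x,\cdot)).
\]

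The key step is to isolate a nonnegative ``demographic stabilization'' square of the form $\frac{\rho_\eps}{\eps}\big(\frac{\int (\cdots) n_\eps}{\rho_\eps}-(\cdots)\big)^2$ as in \eqref{eq:ddrhoC2}, and to bound all remaining terms from below by $-C/\eps$ times a constant. Here assumption \eqref{assum:alphaC4} controls the mixing term: applied with $\phi=n_\eps(t,\cdot)/\rho_\eps(t)\in\calM_+^1$, it gives that the fecundity-variation term is $\geq -C$ (up to a factor involving $\rho_m,\rho_M$ and $\|B\|_\infty$, all finite by Proposition \ref{rho:upperbound} and the non-extinction propositions). Assumption \eqref{assum:boundnetfitness} is what replaces the algebraic cancellation that was available when $R(x,\rho)=\nu\rho$: the terms coming from $R(x,\rho_\eps)$ being trait-dependent rather than proportional to $\rho_\eps$ differ from the ``clean'' case by quantities controlled in $L^\infty$ by $\overline{C_f}$, hence again bounded below by $-C/\eps$ times a constant. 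Putting these together I expect an inequality of the shape
\[
\eps\,\frac{d}{dt}\dot\rho_\eps \geq -\big(\nu\rho_\eps + \p_\rho\text{-terms}\big)\dot\rho_\eps - C,
\]
or rather, after passing to the negative part, $\eps\frac{d}{dt}(\dot\rho_\eps)_- \leq -c\,(\dot\rho_\eps)_- + C$ for some $c>0$ coming from a lower bound on the coefficient (which needs a non-extinction bound $\rho_\eps\geq\rho_m>0$; under \eqref{cond:C43} one should invoke whichever of Propositions \ref{prop:nonext}–\ref{prop:nonext3} applies, or add it as a standing hypothesis). Gronwall's lemma then yields $(\dot\rho_\eps)_-(t)\leq e^{-ct/\eps}(\dot\rho_\eps)_-(0)+\frac{C}{c}(1-e^{-ct/\eps})$, bounded uniformly in $\eps$ on $[0,T]$ thanks to \eqref{assum:rho_0}, and finally $\int_0^T|\dot\rho_\eps|\,dt\leq\rho_\eps(T)-\rho_\eps(0)+2\int_0^T(\dot\rho_\eps)_-\,dt\leq\rho_M+\frac{2CT}{c}+o(1)$, giving the uniform $BV([0,T])$ bound.

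The main obstacle I anticipate is the bookkeeping in the second differentiation: with $R=R(x,\rho)$ one can no longer pull $R$ out of the $x$-integral, so the ``self-competition'' term $\int_\R R(x,\rho_\eps)\big(\frac1{\rho_\eps}\iint K_\eps n_\eps n_\eps\,dy\,dz - R(x,\rho_\eps)n_\eps\big)dx$ from \eqref{eq:ddrho} does not reorganize into a perfect square plus a nonnegative remainder as cleanly as in \eqref{eq:ddrhoC2}. The trick will be to add and subtract the quantity that would appear if $R$ were replaced by a constant (or by $B$), completing the square with respect to $B(\cdot) n_\eps$ and absorbing the mismatch $\int_\R (B(x)-R(x,\rho_\eps))(\cdots)\,dx$ into the $O(1/\eps)\cdot\eps = O(1)$ error using \eqref{assum:boundnetfitness}; one must check that this mismatch is genuinely $O(1)$ and not $O(1/\eps)$, which is exactly where \eqref{assum:alphaC4} (an $O(\eps)$ bound) is used in tandem with $\|B-R\|_\infty\leq\overline{C_f}$. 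A secondary point to be careful about is ensuring the coefficient multiplying $(\dot\rho_\eps)_-$ in the Gronwall inequality is bounded below by a positive constant independent of $\eps$; this requires both an upper bound $\rho_\eps\leq\rho_M$ (Proposition \ref{rho:upperbound}) and, depending on the precise form of the square extracted, a lower bound on a $\kappa''_m$-type quantity for the kernel $B(y)\alpha_\eps(x,y,z)$, which follows from the same manipulations as in Section \ref{sec:BVsimple}.
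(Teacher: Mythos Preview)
Your overall strategy is correct and matches the paper's: differentiate the equation for $\rho_\eps$, isolate a nonnegative square, bound the remainder by an $O(1)$ constant using \eqref{assum:alphaC4} together with \eqref{assum:boundnetfitness}, and conclude with the Gronwall argument exactly as in Proposition~\ref{th_asfecundity}. Your identification of the possible obstacles (the $x$-dependence of $R$ preventing it from factoring out, and the need for a positive lower bound on the Gronwall coefficient) is also accurate.

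Where you diverge from the paper is in the choice of decomposition, and yours is more laborious than necessary. You aim for the Proposition~\ref{th_asfecundity} square $\frac{\rho_\eps}{\eps}\bigl(\frac{\int B n_\eps}{\rho_\eps}-\nu\rho_\eps\bigr)^2$ and then plan to repair the mismatch caused by the $x$-dependent $R$ by adding and subtracting terms. The paper instead starts directly from $\eps\dot\rho_\eps=\int_\R\bigl(B(x)-R(x,\rho_\eps)\bigr)n_\eps\,dx$ and differentiates \emph{this} expression. Substituting $\p_t n_\eps$ from the equation, the square that falls out naturally is $\frac{1}{\eps}\int_\R n_\eps\bigl(B(x)-R(x,\rho_\eps)\bigr)^2\,dx$, which is manifestly nonnegative with no algebraic rearrangement required. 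The only remaining term is
\[
\frac{1}{\eps}\int_\R \bigl(B(x)-R(x,\rho_\eps)\bigr)\Bigl(\frac{1}{\rho_\eps}\iint_{\R^2}\alpha_\eps(x,y,z)B(y)n_\eps(y)n_\eps(z)\,dy\,dz - B(x)n_\eps(x)\Bigr)dx,
\]
and this is precisely $\frac{1}{\eps}\int(B-R)\,\Delta$, where $\|\Delta\|_{L^1}\leq \rho_\eps C\eps$ by \eqref{assum:alphaC4} (applied with $\phi=n_\eps/\rho_\eps$) and $\|B-R\|_\infty\leq\overline{C_f}$ by \eqref{assum:boundnetfitness}. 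Hence the term is $\geq -\rho_M C\,\overline{C_f}$ with no further work, and one arrives immediately at
\[
\eps\,\frac{d}{dt}\dot\rho_\eps \geq -\dot\rho_\eps\int_\R \p_\rho R(x,\rho_\eps)\,n_\eps\,dx - \rho_M C\,\overline{C_f}.
\]
So the ``trick'' you anticipate (completing the square against $B(\cdot)n_\eps$) is unnecessary; choosing the right form of $\eps\dot\rho_\eps$ to differentiate does the bookkeeping for you. Note also that \eqref{assum:alphaC4} is an $L^1$ bound on a \emph{difference of densities}, not merely the lower bound \eqref{assum:alphae} on an integrated quadratic form, and this is exactly what makes the pairing with $\|B-R\|_\infty$ work.
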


In the case at hand, $\rho_\eps$ satisfies
\begin{equation*}
    \eps \dot{\rho_\eps}= \int_\R \left(B(x)- R(x, \rho_\eps)\right)n_\eps(t,x) dx.
\end{equation*} 
Differentiating this equation, we find
\begin{align*}
\eps \frac{d}{dt} \dot{\rho}_\eps = &\int_\R \big( B(x) - R(x, \rho_\eps) \big) \p_t n_\eps 
(t, x) dx - \dot{\rho}_\eps \int_\R \p_{\rho} R(x, \rho_\eps) n_\eps (t, x) dx \\
= &- \dot{\rho}_\eps \int_\R \p_{\rho} R(x, \rho_\eps) n_\eps (t, x) dx
+ \frac{1}{\eps} \int_\R n_\eps(t, x) \big(  B(x) - R (x, \rho_\eps) \big)^2 
\\
&+ \frac{1}{\eps} \int_\R \big( B(x) - R(x, \rho_\eps) \big) 
\Big(\frac{1}{\rho_\eps}\iint_{\R^2} \alpha_{\eps}(x, y, z) B(y) n_\eps(t,y) n_\eps(t,z) dy \,dz 
- B(x) n_\eps(t,x) \Big) dx .
\end{align*}
The last term can be seen as the integral of the net fitness $B-R(\cdot, 
\rho_\eps)$ weighted by a fecundity deviation $\Delta_{n_\eps(t, \cdot)} B$,
with $\int \Delta_{n_\eps(t, \cdot)} B (x) dx = 0$.

Under assumptions \eqref{assum:alphaC4} and \eqref{assum:boundnetfitness}, this additional term is treated as in the case $R(x,\rho)=\nu \rho$, replacing the negative constant on the right-hand side of~\eqref{eq:ddrhoC2bis} by $- \rho_M C \overline{C_f}$, which gives
\begin{equation*}
\eps \ddt \dot{\rho_\eps}(t) \geq - \dot{\rho}_\eps \int_{\R} \p_{\rho} R(x, \rho_\eps) n_\eps (t, x) dx -\rho_M C \overline{C_f}.
\end{equation*}
and, following  the proof the proof of Proposition~\ref{th_asfecundity}, we obtain Corollary~\ref{cor:BV_AF}.

\medskip

A second possible extension is a general death term for the ATH model:
\begin{equation}
R \in \mathcal{C}^1 (\R^d \times \R_+; \R_+), \quad K_\eps(x, y, z) = G_{\eps} (x - z) K_0(x-y).
\label{cond:C43bis}
\end{equation}
In order to see clearly where the difficulty lies, we replace $G_{\eps} (x - z)$ by $\delta_{x = z}$ (letting $\eps \to 0$ in this term only) and prove the
\begin{corollary} \label{cor:bv2}
If $R(x, \rho) = R_1 (\rho)$ and $\rho R'_1(\rho) \geq R_1(\rho)$, then $\dot{\rho_\eps} \leq 0$ implies $\ddot{\rho_\eps} \geq - \frac{\dot{\rho_\eps}}{2 \rho_\eps} \int n \zeta$. Then in particular for all $T > 0$, $(\rho_{\eps})_{\eps}$ is uniformly in $\eps$ bounded in $BV([0, T])$.
\end{corollary}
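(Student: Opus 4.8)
The plan is to notice that replacing the male contribution $G_\eps(x-z)$ by $\delta_{x=z}$ collapses the nonlocal birth term: $\iint_{\R^2}G_\eps(x-z)K_0(x-y)n_\eps(t,y)n_\eps(t,z)\,dy\,dz$ becomes $n_\eps(t,x)\,(K_0\ast n_\eps)(t,x)$, so the equation reduces to
\[
\eps\,\p_t n_\eps(t,x)=\Big(\frac{(K_0\ast n_\eps)(t,x)}{\rho_\eps(t)}-R_1(\rho_\eps(t))\Big)\,n_\eps(t,x),
\]
which is exactly model \eqref{nomutation} with $\nu\rho$ replaced by $R_1(\rho)$. I would therefore redo the computation of Section~\ref{sec:BVsimple}, carrying the general $R_1$ through. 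Integrating in $x$ gives $\eps\dot\rho_\eps=\rho_\eps^{-1}Q_\eps-R_1(\rho_\eps)\rho_\eps$ with $Q_\eps(t):=\int_\R n_\eps\,K_0\ast n_\eps\,dx$, and the evenness of $K_0$ in \eqref{assum:K0} again yields $\dot Q_\eps=2\int_\R(K_0\ast n_\eps)\,\p_t n_\eps\,dx$.

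Differentiating once more, substituting $\eps\,\p_t n_\eps=(\rho_\eps^{-1}K_0\ast n_\eps-R_1(\rho_\eps))\,n_\eps$, and completing the square as in the derivation of \eqref{eq:simpleddrho}, I expect the analogue of \eqref{eq:simpleddrho} to read
\[
\frac{\eps}{2}\ddot\rho_\eps=\frac{1}{\eps}\int_\R n_\eps\Big(\frac{K_0\ast n_\eps}{\rho_\eps}-R_1(\rho_\eps)\Big)^2dx-\frac{\dot\rho_\eps}{2}\Big(\frac{1}{\rho_\eps^2}\int_\R n_\eps\,K_0\ast n_\eps\,dx+\rho_\eps R_1'(\rho_\eps)-R_1(\rho_\eps)\Big).
\]
The delicate point is precisely this bookkeeping: the two time derivatives of $R_1(\rho_\eps)\rho_\eps$ and of $\rho_\eps^{-1}$ produce terms $R_1(\rho_\eps)\dot\rho_\eps$ and $R_1'(\rho_\eps)\rho_\eps\dot\rho_\eps$ that must recombine into the single factor $\rho_\eps R_1'(\rho_\eps)-R_1(\rho_\eps)$; when $R_1(\rho)=\nu\rho$ this factor vanishes and one recovers \eqref{eq:simpleddrho} exactly. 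The hypothesis $\rho R_1'(\rho)\geq R_1(\rho)$ (equivalently, $\rho\mapsto R_1(\rho)/\rho$ nondecreasing) is used exactly to make that extra factor nonnegative.

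With this identity in hand the conclusion is immediate. Since $K_0\geq 0$ gives $\int_\R n_\eps\,K_0\ast n_\eps\,dx\geq 0$ and the squared term is $\geq 0$, whenever $\dot\rho_\eps\leq 0$ all three contributions on the right-hand side are nonnegative, so
\[
\frac{\eps}{2}\ddot\rho_\eps\geq-\frac{\dot\rho_\eps}{2\rho_\eps^2}\int_\R n_\eps\,K_0\ast n_\eps\,dx\geq 0,
\]
which is the announced bound $\ddot\rho_\eps\geq-\frac{\dot\rho_\eps}{2\rho_\eps}\int_\R n_\eps\,\zeta\,dx$ with $\zeta:=\frac{2}{\eps\rho_\eps}K_0\ast n_\eps$ (the precise normalization of $\zeta$ being immaterial). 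Consequently $\rho_\eps$ has no strict local maximum, hence is eventually monotone; being bounded above by $\rho_M$ (Proposition~\ref{rho:upperbound}) and below by a positive $\rho_m$ under the non-extinction condition $\kappa''_m=\inf_{\phi\in\calM^1_+(\R)}\int_\R\phi\,K_0\ast\phi\,dx>0$ (which gives $\rho_\eps^{-2}Q_\eps\geq\kappa''_m$), one deduces from the sharper form of the identity $\frac{d}{dt}(\dot\rho_\eps)_-\leq-\frac{\kappa''_m}{\eps}(\dot\rho_\eps)_-$, and integrating together with \eqref{assum:rho_0} bounds $\int_0^T|\dot\rho_\eps|\,dt$ uniformly in $\eps$, exactly as in Section~\ref{sec:BVsimple}. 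The main obstacle is thus purely the algebra of the second step: tracking the terms generated by differentiating a density-dependent death rate and recognizing that they assemble into the combination $\rho_\eps R_1'(\rho_\eps)-R_1(\rho_\eps)$ whose sign the hypothesis controls; once that is done, the sign argument and the $BV$ conclusion follow the template already established for \eqref{nomutation}.
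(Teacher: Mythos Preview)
Your argument is correct and is essentially the paper's own: the paper derives the general identity \eqref{eq:ddrhoC5} with the $1/\eps$ term written as $\int n_\eps(\zeta-\tfrac{R+Q}{2})(\zeta-R)$, and specializing to $R=R_1(\rho_\eps)$, $Q=\rho_\eps R_1'(\rho_\eps)$ yields exactly your formula
\[
\frac{\eps}{2}\ddot\rho_\eps=\frac{1}{\eps}\int_\R n_\eps(\zeta-R_1)^2\,dx-\frac{\dot\rho_\eps}{2}\Big(\frac{1}{\rho_\eps}\int_\R n_\eps\zeta\,dx+\rho_\eps R_1'(\rho_\eps)-R_1(\rho_\eps)\Big),
\]
after which the hypothesis $\rho R_1'\geq R_1$ and $\dot\rho_\eps\leq 0$ give the claimed inequality and the $BV$ bound follows as in Section~\ref{sec:BVsimple}. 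One small correction: $\zeta$ is already defined in the paper, immediately before the corollary, as $\zeta:=K_0\ast n_\eps/\rho_\eps$; with that $\zeta$ your displayed inequality $\tfrac{\eps}{2}\ddot\rho_\eps\geq-\tfrac{\dot\rho_\eps}{2\rho_\eps}\int n_\eps\zeta$ is precisely the statement (before dividing by $\eps/2$), so there is no need to introduce an ad hoc normalization $\zeta=\tfrac{2}{\eps\rho_\eps}K_0\ast n_\eps$.
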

Notice that, for instance, the assumption on $R_1$ holds for $R_1(\rho) = \nu \rho^{\gamma}$ for some $\gamma \geq 1$. 
\\

 For simplicity, we define 
\[
\zeta(t, x) := \frac{K_0 \ast n_\eps(t, \cdot)(x)}{\rho_\eps(t)}, \quad Q(t) := \int_{\R} \p_{\rho} R(x, \rho_\eps(t)) n_\eps(t, x) dx.
\]
After computations similar to the previous ones, we find
\begin{equation}
\frac{1}{2} \eps \frac{d}{dt} \dot{\rho}_\eps = - \frac{\dot{\rho}_\eps}{2 \rho_\eps} \int_{\R} n_\eps \zeta + \frac{1}{\eps} \int_{\R} n_\eps \Big[ \zeta^2 - R \zeta + \frac{R+Q}{2}\big( R - \zeta \big) \Big],
\label{eq:ddrhoC5}
\end{equation}
and the term in $\frac{1}{\eps}$ rewrites
\[
\int_{\R} n (\zeta - \frac{R+Q}{2}) (\zeta - R).
\]
Meanwhile, one can check that
\[
\eps \dot{\rho_\eps} = \int_{\R} n_\eps (\zeta - R).
\]

When $\dot{\rho}_\eps \leq 0$ we would like to prove that the term in $\frac{1}{\eps}$ in \eqref{eq:ddrhoC5} is non-negative. We could be less restrictive and simply require $\ddot{\rho}_\eps \geq 0$. This reads (with $q_\eps(t, x) = n_\eps(t, x) / \rho_\eps (t)$):
\[
\int_{\R} q_\eps(t, x) \big( \zeta (t, x) - R(x, \rho_\eps(t)) \big) \Big( \zeta (t, x) - \frac{R(x, \rho_\eps(t)) + Q(t)}{2} - \int_{\R} q_\eps(t, y) \zeta (t, y) dy \Big) dx \geq 0
\]
if
\[
\int_{\R} q_\eps(t, x) \big( \zeta(t, x) - R(x, \rho_\eps(t)) \big) dx \leq 0.
\]

A straightforward computation gives the Corollary~\ref{cor:bv2} but other (more general) cases can be treated similarly.


\subsection[Discussion on the Lyapunov approach]{Discussion on the Lyapunov approach \sectionmark{Lyapunov approach}}
\sectionmark{Discussion on the Lyapunov approach}
\label{ch3:Lyapunov}

We may also discuss the Lyapunov approach applied to reproduction terms including mutations. As in Section \ref{ch3:subs:Lyapunov}, we define 
$q_{\eps}(t,x) := n_{\eps} (\eps t, x)/\rho_{\eps}(\eps t)$. 
For equation~\eqref{eq:resgeneralform}, 
assuming $R(x,\rho) = R_0 (x) + R_1 (\rho)$, we find
\begin{equation}
\left\{ 
\begin{aligned} \p_t q_{\eps} (t,x) =  &\iint_{\R^2} K_{\eps} (x, y, z) q_{\eps} (t, y) q_{\eps} (t,z) dy dz - R_0 (x) q_{\eps} (t,x)
\\
 &- q_{\eps}(t,x)\Big( \iiint_{\R^3} K_{\eps} (x',y,z) q_{\eps}(t,y) q_{\eps} (t, z) dx' dy dz - \int_{\R} R_0 (x') q_{\eps} (t, x') dx' \Big), 
\\
q_{\eps}(0, x) =\,  &q^0_{\eps} (x).
\end{aligned}
\right.
\label{eq:moregenqLyapunov}
\end{equation}

A natural candidate Lyapunov functional is given by
\[
 J^{\eps} (q) := \f{1}{2} \iiint_{\R^3} K_{\eps}^S (x,y,z) q(y) q(z) dx dy dz - \int_{\R} R_0 (x) q(x) dx,
\]
\[
 K_{\eps}^S (x, y, z) := \f{K_{\eps} (x, z, y) + K_{\eps} (x, y, z)}{2}.
\]
Then, we can compute along an orbit of~\eqref{eq:moregenqLyapunov}
\begin{align*}
 \ddt J^{\eps} (q_{\eps}(t, \cdot)) &= \int_{\R} \p_t q_{\eps} (t, y) \big( \iint_{\R^2} K_{\eps}^S (x, y, z) q_{\eps} (t, z) dz dx - R_0 (y) \big) dy
 \\
 &= \idotsint_{\R^5} K_{\eps}^S (x, y, z) K_{\eps}^S (y, y', z') q_{\eps} (t, y') q_{\eps} (t, z') q_{\eps} (t, z) dz' dy' dz dy dx
 \\
&- \iiint_{\R^3} K_{\eps}^S(x,y,z) \big( R_0 (x) + R_0 (y) \big) q_{\eps} (t, y) q_{\eps} (t, z) dx dy dz + \int_{\R} q_{\eps} (t, x) R_0^2 (x) dx 
\\
&- \Big( \iiint_{\R^3} K_{\eps}^S (x,y,z) q_{\eps} (t, y) q_{\eps} (t, z) dx dy dz - \int_{\R} q_{\eps} (t, y) R_0 (y) dy \Big)^2.
\end{align*}

In the special case $R_0 \equiv 0$, to get a non-decreasing $J^{\eps}$ along orbits, we need to assume
\begin{multline}
\forall \xi \in \calM_+^1 (\R), \quad \idotsint_{\R^5} K_{\eps}^S (x, y, z) K_{\eps}^S (y, y', z') \xi(y') \xi(z') \xi(z) dz' dy' dz dy dx
\\ \geq \Big( \iiint_{\R^3} K_{\eps}^S (x, y, z) \xi(y) \xi(z) dx dy dz \Big)^2,
\label{hyp:lyap1}
\end{multline}
which could be interpreted as an increase of fecundity from parents to offspring, with equality only if the dynamic is at rest, that is
\[
 \iint_{\R^2} K_{\eps}^S (\cdot, y, z) \xi(y) \xi(z)\, dydz \text{ is constant on } \supp(\xi).
\]
In other words, to obtain a Lyapunov functional requires a perfect analogue of the Cauchy-Schwarz inequality.

When $R_0 \neq 0$, this Lyapunov functional also applies for~\eqref{asfecundity} with constant $B$, that is under the assumption
\[
 \exists B > 0,  \, \forall y, z, \quad \int_{\R} K_{\eps} (x, y, z) dx = B.
\]
Then,  we write $K_{\eps} = B \alpha_{\eps}$ and get $J^{\eps} (q) = \f{B}{2} - \int_{\R} q(y) R_0 (y) dy$ so that
\begin{multline*}
 \ddt J^{\eps} (q_{\eps}(t, \cdot)) = \int_{\R} q_{\eps}(t,y) R_0^2 (y) dy - \big(\int_{\R} q_{\eps}(t,y) R_0 (y) dy \big)^2 
 \\
 + B \big( \int_{\R} q_{\eps}(t,y) R_0 (y) dy - \iiint_{\R^3} R_0 (x) \alpha_{\eps} (x, y, z) q_{\eps} (t, y) q_{\eps}(t, z) dx dy dz \big).
\end{multline*}
To get that  $J^{\eps}$ is non-decreasing along orbits, one possible additional assumption is therefore
\begin{equation}
 \forall \xi \in \calM_+^1 (\R), \quad \int_{\R} R_0 (y) \xi(y) dy \geq \iiint_{\R^3} R_0 (x) \alpha_{\eps}(x,y,z) \xi(y)\xi(z) dx dy dz,
 \label{hyp:lyap2}
\end{equation}
which could be interpreted as a decrease of the death rate from parents to offspring.
\\

These two conditions could be combined for more generality.  However, more realistic assumptions such as \eqref{assum:alphae}, \eqref{assum:alphaC4} or \eqref{hyp:conv_ge} do not imply that $J^{\eps}$ itself is a non-decreasing Lyapunov function, but rather that along an orbit of~\eqref{eq:moregenqLyapunov},
\[
 \ddt J^{\eps} (q_{\eps} (t,\cdot)) = j^0 (q_{\eps} (t, \cdot)) + \eps j^1_{\eps} (q_{\eps} (t, \cdot)),
\]
where $j^1_{\eps}$ is uniformly bounded, and $j^0(q) \geq 0$ with equality if and only if $q$ is a rest point of the limit dynamics.
In other words, we get Lyapunov stability {\itshape asymptotically} as $\eps \to 0$. 
The possible outcomes of this approach are still to be investigated.

\section{The Hamilton-Jacobi equation} \label{ch3:HJ}

In the context of evolutionary dynamics, the Hamilton-Jacobi 
approach has been introduced in~\cite{Dieketal2005} and then 
developed in~\cite{BarPer2008, LorMirPer2011} to study the 
concentration effect for phenotypically structured PDE models 
of asexual populations. This approach
consists in determining the possible Dirac distributions through the zeros of $u_\eps$ defined from the Hopf-Cole transform
\begin{equation*}
u_\eps(t,x)=\eps \ln n_\eps(t,x).
\end{equation*}
In the mentioned works, the convergence of $u_\eps$ as $\eps$ goes to 0 is rigorously established and the limit~$u$ satisfies a constrained Hamilton-Jacobi equation, using the theory of viscosity solutions (see~\cite{CIL1992, Barles1994} for an introduction). The constraint on the solution $u$ reads
\begin{equation*}
\max_{x \in \R} u(t,x)=0, \quad\forall t>0,
\end{equation*}
and comes from the control in $L^1$ of the total population. 
Then, some properties on the concentration points can be 
derived from the study of this constrained Hamilton-Jacobi 
equation and the solution~$u$. In some particular cases, 
it is proved that the population density remains monomorphic, 
that is composed of a single Dirac mass, and then a form 
of canonical equation is derived, giving the dynamics 
of the dominant trait.

\bigskip

In the present work, this Hamilton-Jacobi structure 
arises in the different situations that were previously studied.
We prove in this section Theorem~\ref{th:HJ}, which 
states different results on 
the regularity of $u_\eps$ and a constraint on the limit $u$. 
Then, we deal with the limiting Hamilton-Jacobi
equations and the consequences of Theorem~\ref{th:HJ} to discuss the potential
concentration points.

The statements of Theorem~\ref{th:HJ} concern, for both models \eqref{asfecundity} and \eqref{astrait},
the convergence of $u_\eps$ as~$\eps$~vanishes, up to extratction of 
subsequences, and the existence of a uniform
upper bound on $u_\eps$ that converges to 0. To prove the first point, 
we derive a priori estimates 
on $u_\eps$, and then on its derivatives, in order to use compactness arguments. 
The second point relies on these derived estimates.

The uniqueness of the solution to the limit equation has not been 
proved in our context (see~\cite{CalvezL2020} for the most general result so far), thus we only derive convergence up to extraction of 
subsequences. Moreover, the stability result is not complete : the convergence of $u_\eps$ to a solution of the 
limiting constrained Hamilton-Jacobi equation, at least
in the sense of viscosity, remains to be rigorously proved.
The main obstacles to the proof we encounter are the 
time-dependency of the coefficients and their lack of regularity.

In this section, we first derive the limiting Hamilton-Jacobi equations 
associated to some particular forms of \eqref{asfecundity} and \eqref{astrait},
and introduce the assumptions that are needed in the proof of 
Theorem~\ref{th:HJ}. The proof is deferred to Appendix~\ref{appendixA}. 
Then, we discuss the formal limits of $u_\eps$ and $n_\eps$, regarding 
the concentration of the population. Finally we present the consequences in the case
of the no mutation model \eqref{nomutation}, for which we can conclude the monomorphic 
behavior of the population density.

\subsection{Derivation of the constrained H-J equations}

\medskip

\noindent{\bfseries Asymmetric fecundity:} we use the particular form $\alpha_\eps (x,y,z)=\frac{1}{\eps} \alpha\left(\frac{x-z}{\eps}, y\right)$, that is 
\begin{equation*}
K_\eps(x, y, z) = B(y) \frac{1}{\eps}\alpha\left(\frac{x-z}{\eps}, y\right) \text{ with } \int_{\R} \alpha(z', y) dz' = 1 \text{ for all } y,
\end{equation*}
and we define
\begin{equation}\label{def:b_e}
r_\eps(t,x):=R(x, \rho_\eps (t)), \quad q_\eps(t,y)=  \frac{n_\eps(t,y)}{\rho_\eps (t)}.
\end{equation}
With these notations, and going back to \eqref{asfecundity}, the equation on $u_\eps$ reads 
\begin{equation}\label{eq:AF_HJe}
\p_t u_\eps(t,x) = \int_{\R} B(y) q_\eps(t,y) 
\int_{\R} \alpha(z,y) e^{\frac{u_\eps(t,x- \eps z)-u_\eps(t,x)}{\eps}}dz\,dy - r_\eps(t,x),
\end{equation}
and we compute the formal limiting equation
\begin{equation}\label{AF_HJ}
\begin{aligned}
\p_t u(t,x) &= \int_{\R} B(y) q(t,y)  \int_{\R} \alpha(z,y) e^{-\p_x u(t,x)\cdot z}dz\,dy - r(t,x)\\
&=\int_{\R} B(y) q(t,y)  \mathcal{L}[\alpha(\cdot,y)](\p_x u(t,x))dy - r(t,x),
\end{aligned}
\end{equation}
with $\mathcal{L}[\alpha(\cdot,y)]$ the Laplace transform of $\alpha(\cdot,y)$ for all $y$:
\begin{equation*}\label{laplace}
\mathcal{L}[\alpha](p):=\int_{\R} \alpha(z) e^{-p \cdot z}dz,
\end{equation*}
for $\alpha$ a probability density function.

\bigskip

\noindent{\bfseries Asymmetric trait heredity:} The interest of this problem comes from the time- and trait-dependent coefficients of the Hamiltonian. We use the generic form
\begin{equation*}\label{eq:generic_ath}
K_\eps(x,y,z)=G_\eps(x-z)K_1(x,y).
\end{equation*}
Going back to \eqref{astrait}, and after the change of variable $z'=\frac{x-z}{\eps}$, the equation on $u_\eps$ reads
\begin{equation}\label{eq:ATH_HJe}
\p_t u_\eps(t,x) = \frac{1}{\rho_\eps (t)}\int_{\R} K_1(x,y) n_\eps(t,y) dy \cdot \int_{\R} G(z') e^{\frac{u_\eps(t,x- \eps z')-u_\eps(t,x)}{\eps}}dz' - r_\eps(t,x).
\end{equation}
For clarity, we define
\begin{equation}\label{def:a_e}
k_\eps(t,x):=\int_{\R} K_1(x,y)q_\eps(t,y)dy.
\end{equation}
At the limit $\eps \to 0$, we obtain the formal limiting equation
\begin{equation}\label{ATH_HJ}
\begin{aligned}
\p_t u(t,x) &=  k(t,x) \int_{\R} G(z) e^{-\p_x u(t,x) \cdot z}dz - r(t,x)\\
&=k(t,x) \,\mathcal{L}[G](\p_x u(t,x))-r(t,x),
\end{aligned}
\end{equation}
with $a$ and $b$ the formal limits of $a_\eps$ and $b_\eps$ defined in \eqref{def:a_e} and \eqref{def:b_e}, and $\mathcal{L}[G]$ the Laplace transform of $G$. From now on, we choose $G$ such that its Laplace transform is well defined on $\R$.

In the case $G$ is the gaussian density, the equation on $u_\eps$ reads
\begin{equation}\label{eq:gauss_ath}
\p_t u_\eps(t,x) = k_\eps(t,x) \int_{\R} \frac{1}{\sqrt{2\pi}}e^{-\frac{\vert z \vert^2}{2}} e^{\frac{u_\eps(t,x- \eps z)-u_\eps(t,x)}{\eps}}dz - r_\eps(t,x).
\end{equation}
Then, passing formally to the limit $\eps \to 0$, we arrive at
\begin{equation*}
\begin{aligned}
\p_t u(t,x) &=  k(t,x) \int_{\R} \frac{1}{\sqrt{2\pi}}e^{-\frac{\vert z \vert^2}{2}} e^{-\p_x u(t,x) \cdot z}dz - r(t,x)
\\[1mm]
&=k(t,x) \,e^{\frac{(\p_x u(t,x))^2 }{2}}-r(t,x).
\end{aligned}
\end{equation*}

The complete proof of Theorem \ref{th:HJ} is deferred to Appendix~\ref{appendixA}, since it uses 
quite standard and technical arguments.
We mostly focus on Equation~\eqref{eq:gauss_ath}, but the methods are identical for the generic ATH case. 
The proof of the theorem in the AF case is similar and we also give the formal ideas where it is necessary.

\vspace{3mm}

\noindent{\bfseries Assumptions for Theorem \ref{th:HJ}:} We assume on the function $R$
\begin{equation}
\label{Hyp:croissance_R}
\exists C_0 >0, \, \forall \rho_m \leq \rho \leq \rho_M, \, \forall x \in \R, \quad R(x, \rho) \leq C_0(1+|x|),
\end{equation}
\begin{equation}\label{Hyp:R}
\exists L_r >0, \forall \rho_m \leq \rho \leq \rho_M, \, \forall x \in \R, \quad \vert \p_x R(x,\rho) \vert \leq L_r. 
\end{equation}
We choose the positive function $K_1$ bounded
\begin{equation}\label{bound:K1}
\exists \bar K>0, \forall x,y \in \R, \quad K_1(x,y) \leq \bar K,
\end{equation}
and such that,
\begin{equation}\label{Hyp:aeps}
\exists \lambda>0, \exists C_\lambda >0,\forall \eps>0, t\geq0, x\in \R, \quad e^{\frac{|\p_x k_\eps(t,x)|}{\lambda k_\eps(t,x)}} \lambda k_\eps(t,x) \leq C_\lambda.
\end{equation}
This assumption is satisfied for example when $K_1$ is bounded and there exists a constant $L_K$ such that
\begin{equation*}
|\p_x K_1(x,y)| \leq L_K |K_1(x,y)|, \quad \forall x, y \in \R,
\end{equation*}
or, when $K_1$ induces a gaussian type distribution for $a_\eps$, that is,
\begin{equation*}
k_\eps (t,x) \sim C e^{\frac{-(x-m)^2}{\sigma^2}}.
\end{equation*}
We also assume on the initial condition
\begin{equation}
\label{initial_u}
u_\eps^0 (x) \leq -A |x| + C, \quad \|\p_x u^0_\eps \| \leq L_0.
\end{equation}
For the model with asymmetric fecundity, we assume that $B$ 
and $\alpha$ are positive and bounded. 

\subsection{Limiting Hamilton-Jacobi equations}
\label{sec:polymorphism}

In the context of viscosity solutions, cf.~\cite{CIL1992, Barles1994}, 
the use of the stability property enables to prove the convergence 
of $u_\eps$ to a solution to the corresponding constrained Hamilton-Jacobi equation,
from which we can deduce some information on the potential concentration points.
Despite the lack of regularity of the considered Hamiltonians, we make 
here some comments on the limiting equations we obtained.

As it is classically proved with the Hamilton-Jacobi approach to adaptive dynamics, the limit function $u$ satisfies the constraint
\begin{equation}
\max_{x \in \R} u(t,x)=0, \quad\forall t>0,
\end{equation}
because of the control on the total population density.
Then, when $u$ is differentiable at maximum points, we deduce that $\p_t u$ and $\p_x u$ are equal to 0 and, going back to \eqref{AF_HJ} and \eqref{ATH_HJ}, 
we obtain, for $\bar n$ the formal limit of $n_\eps$,
\begin{equation}
\text{supp } \bar n \subset \{ (t,x)\in (0,\infty)\times \R | b(t) - r(t,x)=0 \}, \quad \text{in case \eqref{asfecundity}},
\end{equation}
\begin{equation}
\text{supp } \bar n \subset \{ (t,x)\in (0,\infty)\times \R | k(t,x) - r(t,x)=0 \}, \quad \text{in case \eqref{astrait}},
\end{equation}
where $b(t)$ and $k(t,x)$ are the limits 
\begin{equation}
    b(t) = \int_\R B(y) q(t,y) dy, \quad k(t,x) = \int_\R K_1(x,y) q(t,y) dy.
\end{equation}

It would be then interesting to determine the conditions required to have these null sets reduced to an isolated point. If, for all $t>0$, we identify a unique point $\bar x (t)$ satisfying
\[
b(t)-r(t, \bar x (t))=b (t)-R(\bar x (t), \bar \rho (t) )=0, \quad \text{in case \eqref{asfecundity}},
\]
\[
k(t, \bar x (t))-R(\bar x (t), \bar \rho (t) )=0, \quad \text{in case \eqref{astrait}},
\]
then the population is monomorphic, that composed of a single Dirac mass located on $\bar x (t)$.

\bigskip

Apart from providing a description of the behavior of $u_\eps$, and then of $n_\eps$,
as $\eps$ vanishes, the constrained Hamilton-Jacobi equation usually enables to identify 
the set of points where the population would concentrate. Thence, we can derive under specific regularity 
assumptions a canonical equation, which is a differential equation 
giving the dynamics of the dominant trait in a monomorphic population. But in the cases considered in the present work, 
because of the form of the reproduction terms, the Hamiltonians feature integral terms of the measure $q_\eps$,
whose limits are not explicit as $\eps$ goes to $0$.
Thus, the identification of monomorphic or polymorphic limit is a difficult question.

\bigskip

However, we show a particular case where we deduce a monomorphic state from the study of the population 
at equilibrium. More precisely, we prove that the population cannot be composed of several Dirac masses.

We go back to \eqref{nomutation} and define $\overline{n} \in \calM_+ (\R)$ as an Evolutionary Stable Distribution (ESD) in the sense of \cite{Desetal2008,JabRao2011}, that is
 \begin{align}
  K_0 \ast \overline{n} &= \nu \overline{\rho}^2 \text{ on } \supp(\overline{n}),
  \label{cond:ESD1}
  \\
  K_0 \ast \overline{n} &\leq \nu \overline{\rho}^2 \text{ on } \R,
  \label{cond:ESD2}
 \end{align}
 where $\overline{\rho} = \int \overline{n}$.
The interest of the ESD concept is huge: it is readily established that a stationary solution to \eqref{nomutation} is asymptotically stable if and only if it satisfies \eqref{cond:ESD1} and \eqref{cond:ESD2}.

If we assume that $K_0$ is radial-decreasing, then we prove that 
extreme points in $\supp (\overline{n})$ (if it is bounded) cannot 
support a positive Dirac mass, by using \eqref{cond:ESD2}. In particular, among 
all combinations of Dirac masses, only the single-point measure 
$\overline{n}_{\overline{x}} (x) := K_0 (0) / \nu \delta_{x = \overline{x}}$ is 
an ESD.

Indeed, assume that $\bar n$ is composed of $k \geq 2$ Dirac masses located on $(x_i)_{1\leq i \leq k}$, then defining
\[
\overline{K}(x) := K_0 \ast \bar n (x) =\sum_{i=1}^k \rho_i K_0(x-x_i).
\]
Then, we deduce from \eqref{cond:ESD1} and \eqref{cond:ESD2} that $\overline{K}$ is maximal on the support of $\bar n$, that is the points $x_i$. With no loss of generality, we assume that the sequence $(x_i)$ is ordered and $x_1=\min_{i} x_i $. Then, differentiating $\overline{K}$, we obtain
\[
\overline{K}'(x_1) =\sum_{i\geq 1} \rho_i K_0'(x_1-x_i) >0,
\]
which contradicts the optimality of $\overline{K}$ on the support of $\bar n$.
Hence the population, at the asymptotic limit, cannot be polymorphic.

\section{Conclusion and perspectives}

We investigated adaptive dynamics for population dynamics model including 
sexual reproduction, when the trait is mainly inherited from 
the mother. We determined non-extinction conditions and a 
control on the total population. In the particular case of a 
saturation term $R$ depending only on the competition, we 
derived BV estimates on the total population. In general, 
estimating the variations of $\rho_\eps$ when $R$ depends 
on both trait variable and competition seems difficult, 
and  a Lyapunov functional approach yields 
complementary results under some structure conditions. An open problem is to find another method allowing for more appropriate 
assumptions in order to get stability results.

Concerning the sequences $u_\eps=\eps \ln n_\eps$ associated to each model, 
we obtained local Lipschitz estimates uniform in $\eps$. 
To deduce the convergence of $u_\eps$ to the solution of 
the limiting Hamilton-Jacobi equation with constraint, 
we still need time compactness on the coefficients of 
\eqref{eq:AF_HJe} and~\eqref{eq:ATH_HJe}. As a special 
case of both, for the Hamilton-Jacobi equation associated 
to the model without mutations \eqref{eq:gennLyapunov}, 
if we provide some convergence result on 
$\int K(x,y) \ast n_\eps(t,y)/\rho_{\eps}(t)$ and 
on $\rho_{\eps}$, then, up to extraction of a subsequence, 
the limit function $u$ has an explicit formulation and 
its maximum points can be described. In general, 
Hamilton-Jacobi equations with time- and space-dependent 
coefficients are difficult to deal with when there is a 
lack of regularity. The authors in~\cite{LioSou2000} 
developed a theory of stochastic viscosity solutions 
to tackle nonlinear stochastic PDEs. In particular, 
they prove existence, regularity and uniqueness results 
for the viscosity solution when the time-dependent 
coefficient of the Hamiltonian can be written as the 
derivative of a trajectory. This theory does not apply 
to our models since the coefficients in front of the 
gradient-dependent term are not under the form of a 
time derivative.

Another question is the determination of a convenient 
framework to observe Dirac concentrations. The convergence 
of the population distribution to a sum of Dirac masses 
illustrates the selection of well-adapted or dominant 
phenotypical traits. In \cite{LorMirPer2011, CJ2011}, the 
Hamilton-Jacobi approach enables to characterize the 
dynamics of the dominant traits under specific assumptions 
of regularity. In our framework, the required hypotheses to prove Dirac 
concentrations are to be clarified.


Using the Wasserstein distance has been recently developed in 
\cite{MagRao2015, DFR, FP2021} to derive asymptotics of population distributions for similar equations. It is proved that specific cases of the  
sexual reproduction operator, possibly in an infinitesimal model, 
induce a control, possibly a contraction, for the Wasserstein distance on 
the phenotypical trait space. It could 
be interesting to further explore this method in full generality.

\section*{Acknowledgements}
The  authors  are  very  thankful  to  Pierre-Alexandre Bliman
for proposing the biological motivations and the directions that led to this work. 
B.P. has received funding from the European Research 
Council (ERC) under the European Union's Horizon 2020 research and innovation program (grant 
agreement No 740623).

\begin{appendix}

\section{Proof of Theorem \ref{th:HJ}}\label{appendixA}

\subsection{A priori bounds}

We begin with the estimates for the ATH case, and especially with a gaussian trait female heredity distribution.
\begin{lemma}
    Let $u_\eps$ be solution to equation \eqref{eq:AF_HJe} or \eqref{eq:gauss_ath}. Then, there exist constants $C_1>0$ and $C_2>0$, such that for all $t>0, x\in \R$ and $\eps>0$ we have
    \begin{equation*}
     -C_1(1+t)(1+ |x|)  \leq u_\eps (t,x) \leq -A |x| + C_2(1+t).
    \end{equation*}
\end{lemma}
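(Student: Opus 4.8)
The plan is to establish the two inequalities by separate arguments: the lower bound is an immediate consequence of the sign of the reproduction term, while the upper bound follows from a comparison with the affine barrier $\bar u(t,x):=-A|x|+C_2(1+t)$. Throughout, Proposition~\ref{rho:upperbound} together with the non-extinction bounds gives $\rho_m\le\rho_\eps(t)\le\rho_M$, so that \eqref{Hyp:croissance_R}--\eqref{Hyp:R} apply with constants uniform in $(t,\eps)$; moreover, since $q_\eps(t,\cdot)$ is a probability measure, \eqref{bound:K1} yields $k_\eps(t,x)=\int_\R K_1(x,y)q_\eps(t,y)\,dy\le\bar K$ (and likewise $\int_\R B(y)q_\eps(t,y)\,dy\le\|B\|_\infty$ in the AF case).

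\emph{Lower bound.} In \eqref{eq:AF_HJe} and \eqref{eq:gauss_ath} the reproduction term is an integral of nonnegative weights against exponentials, hence nonnegative; thus $\p_t u_\eps(t,x)\ge -r_\eps(t,x)=-R(x,\rho_\eps(t))\ge -C_0(1+|x|)$ by \eqref{Hyp:croissance_R}. Integrating in time, $u_\eps(t,x)\ge u_\eps^0(x)-C_0(1+|x|)t$. It then remains to bound $u_\eps^0$ below by $-\tilde C(1+|x|)$: the Lipschitz control $\|\p_x u_\eps^0\|_\infty\le L_0$ from \eqref{initial_u} together with $\rho_\eps^0=\int_\R e^{u_\eps^0/\eps}\ge\rho_m>0$ forces $\sup_x u_\eps^0\ge -C'$ uniformly in $\eps\in(0,1]$ (otherwise $\rho_\eps^0$ would be exponentially small), and then $u_\eps^0(x)\ge\sup_x u_\eps^0-L_0|x-x^\star|\ge -\tilde C(1+|x|)$ for a near-maximizer $x^\star$, whose modulus is controlled through $u_\eps^0(x^\star)\le -A|x^\star|+C$. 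This gives $u_\eps(t,x)\ge -C_1(1+t)(1+|x|)$ with $C_1=\max(\tilde C,C_0)$.

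\emph{Upper bound.} Note first $u_\eps^0(x)\le -A|x|+C\le\bar u(0,x)$ as soon as $C_2\ge C$. For fixed $t$, at a point $x_0$ maximizing $x\mapsto(u_\eps-\bar u)(t,x)$ one has, for every $z$, $u_\eps(t,x_0-\eps z)-u_\eps(t,x_0)\le\bar u(t,x_0-\eps z)-\bar u(t,x_0)\le A\eps|z|$, using $|x_0|-|x_0-\eps z|\le\eps|z|$. Hence the reproduction term at $(t,x_0)$ is bounded by $\bar K\int_\R\frac{1}{\sqrt{2\pi}}e^{-|z|^2/2}e^{A|z|}\,dz=:\bar K M_A<\infty$ in the ATH--Gaussian case (respectively by $\|B\|_\infty\sup_y\int_\R\alpha(z,y)e^{A|z|}\,dz<\infty$ in the AF case, finite because $\alpha(\cdot,y)$ has a finite Laplace transform --- the hypothesis that makes \eqref{AF_HJ} meaningful). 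Since $r_\eps\ge 0$ and $\p_t\bar u\equiv C_2$, choosing $C_2\ge\max(C,\bar K M_A)$ gives $\p_t u_\eps(t,x_0)\le\bar K M_A\le\p_t\bar u(t,x_0)$, whence $\frac{d}{dt}\max_x(u_\eps-\bar u)(t,\cdot)\le 0$ and therefore $u_\eps(t,x)\le\bar u(t,x)=-A|x|+C_2(1+t)$.

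\emph{Main obstacle.} The only genuinely delicate point is making this last step rigorous: one must know that the spatial maximum of $u_\eps(t,\cdot)-\bar u(t,\cdot)$ is actually attained, and that $t\mapsto\max_x(u_\eps-\bar u)(t,\cdot)$ obeys the stated differential inequality --- the mild non-smoothness (the kink of $\bar u$ at $0$, and that $u_\eps$ is only $C^1$ in $x$) being harmless. I would secure this either by first propagating the exponential spatial decay of $n_\eps^0$ --- showing $u_\eps(t,\cdot)$ keeps an affine decaying upper envelope with an a priori, possibly crude, time-dependent slope, which suffices to localize the maximum --- or by recasting the comparison in the viscosity-solution framework with a doubling-of-variables/penalization device. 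Once this is in place the constants $C_1,C_2$ are explicit in the data, and the same scheme applies verbatim to the generic ATH Hamiltonian \eqref{eq:ATH_HJe} and, mutatis mutandis, to \eqref{eq:AF_HJe}.
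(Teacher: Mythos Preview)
Your proposal is correct and follows essentially the same route as the paper: the lower bound comes from the nonnegativity of the reproduction term plus the growth assumption \eqref{Hyp:croissance_R} on $R$, and the upper bound from comparison with the affine barrier $-A|x|+C_2(1+t)$, which is a super-solution once $C_2$ absorbs the constant $\bar K\int\frac{1}{\sqrt{2\pi}}e^{-|z|^2/2}e^{A|z|}\,dz$ (respectively $\sup_y B(y)\int\alpha(z,y)e^{A|z|}\,dz$ in the AF case).

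The one notable difference is that the paper simply invokes ``a comparison principle argument'' for the upper bound, whereas you spell out the maximum-of-the-difference computation and, more importantly, flag the attainment issue as the genuine technical point. This is fair: the paper does not address why the spatial maximum of $u_\eps-\bar u$ is attained, and your suggested fixes (propagating a crude a priori decay, or a viscosity/penalization device) are the standard ways to close this gap. For the lower bound on $u_\eps^0$, the paper proceeds more directly via $u_\eps^0(x)\ge u_\eps^0(0)-L_0|x|$ from the Lipschitz bound in \eqref{initial_u}, without your detour through $\rho_\eps^0\ge\rho_m$ and a near-maximizer; either works.
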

We prove this lemma in the case of a gaussian trait female heredity distribution, but the argument exactly applies to equation~\eqref{eq:ATH_HJe} in the generic ATH case.

\begin{proof}
We first prove the lower bound
\begin{equation*}
\label{apriori_up}
u_\eps(t,x) \geq -C_1(1+t)(1+ |x|).
\end{equation*}
Indeed, because $k_\eps \geq 0$ and $\mathcal{L}(G)\geq 0$, we deduce from \eqref{Hyp:croissance_R} that
\[
\p_t u_\eps \geq - r_\eps (t,x) \geq -C_0(1+|x|).
\]
From \eqref{initial_u} we obtain
\begin{equation*}
u_\eps (t,x) \geq \inf_\eps u_\eps^0(0) - \inf_\eps \|\p_x u^0_\eps \| -C_0 t(1+|x|).
\end{equation*}
Hence the lower bound.

We also derive the inequality
\begin{equation*}
u_\eps (t,x) \leq -A |x| + C_2(1+t),
\end{equation*}
where $C_2=\bar K \frac{1}{\sqrt{2\pi}}\int e^{-\vert z \vert^2/2} e^{A\vert z \vert}dz$. 
Indeed, defining $v(t,x):=-A |x| + C_2(1+t)$, we compute
\[
\p_t v(t,x)-k_\eps(t,x)  \int \frac{1}{\sqrt{2\pi}}e^{-\vert z \vert^2/2} e^{\frac{v(t,x- \eps z)-v(t,x)}{\eps}}dz \geq C_2-\bar K \frac{1}{\sqrt{2\pi}}\int e^{-\vert z \vert^2/2} e^{A\vert z \vert}dz \geq 0.
\]
Thus, $v$ is a super-solution of \eqref{eq:gauss_ath}, and since $u^0(x)\leq v(0,x)$ we deduce that $u_\eps(t,x)\leq v(t,x)$ by a comparison principle argument. 
\end{proof}
We obtain the same kind of bounds for the asymmetric fecundity case, with the constant 
$C_2:= \sup_{y}B(y)\int \alpha(z,y) e^{|A|z}dz$..

\subsection{Regularity in space}
We prove the following
\begin{lemma}
Let $u_\eps$ be the solution to the equation \eqref{eq:gauss_ath}. For $\lambda>0$ given by \eqref{Hyp:aeps} and for all $t>0,x \in \R$, we have
\begin{equation*} 
|\p_x u_\eps(t,x)| \leq \| \p_x u_\eps^0 \|_{L^{\infty}} + (C_\lambda +L_r)t +\lambda  \left( \sup_{\eps} \| u^0_\eps \|_{L^{\infty}} + C_1(1+t)(1+ |x|) \right).
\end{equation*}
This implies that $u_\eps$ is Lipschitz in space, uniformly in $\eps$ and locally in time.
\end{lemma}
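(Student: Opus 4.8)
The plan is to derive a differential inequality for the spatial gradient $p_\eps := \partial_x u_\eps$ by differentiating the equation \eqref{eq:gauss_ath} in $x$, then to estimate $p_\eps$ by a Gronwall-type argument, using the a priori bounds on $u_\eps$ from the previous lemma to absorb the boundary/nonlocal contributions. The subtlety — and where the assumption \eqref{Hyp:aeps} enters — is that the nonlocal term in \eqref{eq:gauss_ath} carries the $x$-dependent coefficient $k_\eps(t,x)$, and differentiating it produces $\partial_x k_\eps$, which is controlled only relatively to $k_\eps$ itself via \eqref{Hyp:aeps}; the standard trick will be to combine this with the (exponentially large) factor coming from the nonlocal term and show that their product is uniformly bounded.

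First I would differentiate \eqref{eq:gauss_ath} formally in $x$. Writing the nonlocal operator as $N_\eps[u_\eps](t,x) := \int_\R \tfrac{1}{\sqrt{2\pi}} e^{-|z|^2/2} e^{(u_\eps(t,x-\eps z)-u_\eps(t,x))/\eps}\,dz$, one gets
\begin{equation*}
\partial_t p_\eps(t,x) = \partial_x k_\eps(t,x)\, N_\eps[u_\eps](t,x) + k_\eps(t,x)\,\partial_x N_\eps[u_\eps](t,x) - \partial_x r_\eps(t,x).
\end{equation*}
The term $\partial_x N_\eps[u_\eps]$ equals $\int_\R \tfrac{1}{\sqrt{2\pi}} e^{-|z|^2/2} e^{(u_\eps(t,x-\eps z)-u_\eps(t,x))/\eps}\,\tfrac{p_\eps(t,x-\eps z)-p_\eps(t,x)}{\eps}\,dz$. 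To make this rigorous and to handle the maximum of $p_\eps$ (which is what one actually needs for a Lipschitz bound), I would argue at a point where $|p_\eps|$ is attained — more precisely, work with $\overline p(t) := \sup_x p_\eps(t,x)$ and $\underline p(t) := \inf_x p_\eps(t,x)$, or use a doubling-of-variables / viscosity-solution touching argument if $u_\eps$ is not known to be $C^2$. At a point $x_0$ where $p_\eps(t,\cdot)$ is maximal, $p_\eps(t,x_0-\eps z) - p_\eps(t,x_0) \le 0$ for all $z$, so the $k_\eps \,\partial_x N_\eps$ term is $\le 0$ and can be dropped from the upper estimate for $\tfrac{d}{dt}\overline p$; symmetrically for $\underline p$.

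The remaining terms are bounded as follows. For $\partial_x r_\eps$ we use \eqref{Hyp:R}: $|\partial_x r_\eps| \le L_r$. For the term $\partial_x k_\eps \cdot N_\eps[u_\eps]$, I would bound $N_\eps[u_\eps](t,x) \le e^{|p_\eps(t,x)|} \le e^{\|p_\eps(t,\cdot)\|_\infty}$ using convexity of the exponential together with $|u_\eps(t,x-\eps z)-u_\eps(t,x)| \le \eps |z| \|p_\eps(t,\cdot)\|_\infty$ (Jensen applied after noting the Gaussian weight integrates to $1$; the first absolute moment of the Gaussian is bounded by $1$, or one simply keeps the harmless constant). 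Then $|\partial_x k_\eps|\, N_\eps[u_\eps] \le \lambda k_\eps \, e^{|\partial_x k_\eps|/(\lambda k_\eps)} \cdot \tfrac{1}{\lambda}\,\tfrac{|\partial_x k_\eps|/(\lambda k_\eps) \cdot N_\eps}{e^{|\partial_x k_\eps|/(\lambda k_\eps)}} $ — more cleanly: since $s \le e^{s}$ one has $\tfrac{|\partial_x k_\eps|}{\lambda k_\eps} \le e^{|\partial_x k_\eps|/(\lambda k_\eps)}$, hence $|\partial_x k_\eps|\,N_\eps \le \lambda k_\eps\, e^{|\partial_x k_\eps|/(\lambda k_\eps)} N_\eps \le C_\lambda N_\eps$ by \eqref{Hyp:aeps}, and I still need to keep one explicit power of $e^{\|p_\eps\|_\infty}$ away — so instead I split $N_\eps \le e^{|p_\eps|}$ and absorb the exponential growth into the $\lambda$-term by the elementary inequality $a\,e^{b} \le C + \lambda\,(\text{linear in the } u_\eps \text{ bound})$, which is exactly the device that produces the $\lambda\big(\sup_\eps\|u^0_\eps\|_\infty + C_1(1+t)(1+|x|)\big)$ summand in the statement via the a priori lower bound on $u_\eps$.

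Putting this together yields $\tfrac{d}{dt}\overline p(t) \le C_\lambda + L_r$ (plus the controlled $\lambda$-correction), and integrating from $0$ gives $\overline p(t) \le \|\partial_x u^0_\eps\|_\infty + (C_\lambda+L_r)t + \lambda(\sup_\eps\|u^0_\eps\|_\infty + C_1(1+t)(1+|x|))$, with the symmetric bound for $-\underline p$; together these give the claimed estimate on $|\partial_x u_\eps(t,x)|$, which is uniform in $\eps$ and, for $t$ and $x$ in bounded sets, uniform — i.e.\ local-in-time, local-in-space Lipschitz regularity. The main obstacle I anticipate is the rigorous justification of the differentiation and the maximum-point argument when $u_\eps$ has only the regularity coming from the PDE (Lipschitz, not $C^2$): this requires either an approximation/mollification scheme for \eqref{eq:gauss_ath}, or carrying the whole argument out in the viscosity framework with test functions touching $u_\eps$ from above/below, tracking the nonlocal term carefully through the doubling of variables. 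The AF case \eqref{eq:AF_HJe} is handled identically, with $N_\eps$ replaced by the $\alpha$-weighted nonlocal operator and $\partial_x k_\eps$ replaced by $\partial_x(B q_\eps)$-type coefficients, using the boundedness of $B$ and $\alpha$ in place of \eqref{Hyp:aeps}.
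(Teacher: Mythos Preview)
Your approach has a genuine gap at the key step. Working directly with $\overline p(t)=\sup_x p_\eps(t,x)$ and dropping the nonpositive term $k_\eps\,\partial_x N_\eps$ at a maximum of $p_\eps$ is fine, but you are then left with $\partial_x k_\eps(t,x)\,N_\eps[u_\eps](t,x)$, and this term does \emph{not} admit a bound independent of $\|p_\eps\|_\infty$. From \eqref{Hyp:aeps} you can indeed extract $|\partial_x k_\eps|\le C_\lambda$ (via $s\le e^s$), but $N_\eps[u_\eps](t,x)=\int \tfrac{1}{\sqrt{2\pi}}e^{-z^2/2}e^{D_\eps}\,dz$ is only controlled by something like $\int e^{-z^2/2}e^{|z|\,\|p_\eps\|_\infty}\,dz$, which grows super-exponentially in $\|p_\eps\|_\infty$. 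The resulting differential inequality $\overline p\,'\le C_\lambda\,N_\eps+L_r$ therefore does not close, and the sentence ``absorb the exponential growth into the $\lambda$-term by the elementary inequality $a\,e^b\le C+\lambda(\dots)$'' is not an argument: no such inequality exists that turns $e^{D_\eps}$ into something linear in $u_\eps$.

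What the paper does --- and what you are missing --- is a Bernstein-type change of unknown: set $w_\eps^\lambda:=p_\eps+\lambda u_\eps$ and look at a maximum of $w_\eps^\lambda$, not of $p_\eps$. The point is that the $\lambda\,\partial_t u_\eps$ contribution produces an extra term $-\lambda k_\eps\,D_\eps\,e^{D_\eps}$ inside the integral, so that one has to bound $f(D):=e^{D}\big(\partial_x k_\eps+\lambda k_\eps-\lambda k_\eps D\big)$. This function is bounded above in $D\in\R$ (the $-\lambda k_\eps D e^D$ term kills the growth), and its maximum equals $e^{\partial_x k_\eps/(\lambda k_\eps)}\lambda k_\eps\le C_\lambda$ by \eqref{Hyp:aeps}. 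This yields $w_\eps^\lambda(t,x)\le \max_x w_\eps^\lambda(0,x)+(C_\lambda+L_r)t$, and then $p_\eps=w_\eps^\lambda-\lambda u_\eps$ together with the a~priori lower bound $u_\eps\ge -C_1(1+t)(1+|x|)$ gives exactly the stated estimate. The $\lambda(\cdot)$ summand in the statement is not an ad~hoc correction; it is precisely $-\lambda u_\eps$ estimated from below. Your intuition that ``the $\lambda$-correction should absorb something'' is right, but the mechanism is this auxiliary function, not a pointwise inequality on $a\,e^b$.
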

\begin{proof}
We use the notations
\begin{equation*}
p_\eps(t,x)=\p_x u_\eps (t,x), \quad p(t,x)=\p_x u(t,x).
\end{equation*}
Differentiating \eqref{eq:gauss_ath}, $p_\eps$ satisfies
\begin{align*}
\p_t p_\eps (t,x) &= \p_x k_\eps(t,x) \cdot \int \frac{1}{\sqrt{\pi}}e^{-\vert z \vert^2} e^{\frac{u_\eps(t,x- \eps z)-u_\eps(t,x)}{\eps}}dz \\
&+  k_\eps(t,x) \int \frac{1}{\sqrt{\pi}}e^{-\vert z \vert^2} e^{\frac{u_\eps(t,x- \eps z)-u_\eps(t,x)}{\eps}}\left(\frac{p_\eps(t,x- \eps z)-p_\eps(t,x)}{\eps}\right)dz - \p_x r_\eps(t,x).
\end{align*}
Let $\lambda>0$. We define 
\begin{equation*}\label{def:w_D}
w_\eps^\lambda (t,x)=p_\eps(t,x) + \lambda u_\eps (t,x),\quad D_\eps (t,x,z)=\frac{u_\eps(t,x- \eps z)-u_\eps(t,x)}{\eps}. 
\end{equation*}
Then, $w^\lambda_\eps$ satisfies
\begin{align*}
\p_t w^\lambda_\eps &=k_\eps \cdot \int \frac{1}{\sqrt{\pi}}e^{-\vert z \vert^2} e^{D_\eps(t,x,z)}\left(\frac{w^\lambda_\eps(t,x- \eps z)-w^\lambda_\eps(t,x)}{\eps}\right)dz\\
&- \lambda \left[ k_\eps \cdot \int \frac{1}{\sqrt{\pi}}e^{-\vert z \vert^2} e^{D_\eps(t,x,z)}(D_\eps(t,x,z) - 1)\right]dy\\
&+\p_x k_\eps \cdot \int \frac{1}{\sqrt{\pi}}e^{-\vert z \vert^2} e^{D_\eps(t,x,z)}dz - (\p_x r_\eps + \lambda r_\eps).
\end{align*}
Then, using \eqref{Hyp:R}, we have
\begin{align*}
\p_t w^\lambda_\eps &-L_r - k_\eps \cdot \int \frac{1}{\sqrt{\pi}}e^{-\vert z \vert^2} e^{D_\eps}\left(\frac{w^\lambda_\eps(t,x- \eps z)-w^\lambda_\eps(t,x)}{\eps}\right)dz \\
&\leq \int \frac{1}{\sqrt{\pi}}e^{-\vert z \vert^2}e^{D_\eps}\left[ \p_x k_\eps+ \lambda k_\eps -\lambda k_\eps D_\eps \right]dz.
\end{align*}
Defining $f(D):=e^D (\p_x k_\eps+\lambda k_\eps- \lambda k_\eps D)$, the maximum of $f$  on $\R$ is reached at $D^*:=\frac{\p_x k_\eps}{\lambda k_\eps}$ and equals
\begin{equation*}
e^{\frac{\p_x k_\eps}{\lambda k_\eps}} \lambda k_\eps \leq C_\lambda,
\end{equation*}
from \eqref{Hyp:aeps}. Then we have the upper bound
\begin{equation*}
w^\lambda_\eps(t,x) \leq \max_\R w_\eps^\lambda (0,x) + Ct, \quad C= C_\lambda+L_r,
\end{equation*}
which implies the upper bound on $p_\eps$
\begin{equation*}
p_\eps(t,x) \leq \| \p_x u_\eps^0 \|_{L^{\infty}} + Ct +\lambda  \left( \sup_{\eps} \| u^0_\eps \|_{L^{\infty}} + C_1(1+t)(1+ |x|) \right).
\end{equation*}
We have the same estimate for $-p_\eps$.
\end{proof}

For the AF model, we have the following estimate on the derivative in space of $u_\eps$:
\begin{lemma}
Let $u_\eps$ be the solution of equation \eqref{eq:AF_HJe}. Then, for all $t>0,x \in \R$ and $\eps>0$, we have
\begin{equation*} 
|\p_x u_\eps(t,x)| \leq \| \p_x u_\eps^0 \|_{L^{\infty}} + L_r t.
\end{equation*}
This implies that $u_\eps$ is Lipschitz in space, uniformly in $\eps$ and locally in time.
\end{lemma}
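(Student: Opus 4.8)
\emph{Proof proposal.} The plan is to differentiate \eqref{eq:AF_HJe} in $x$, read off a linear nonlocal equation for $p_\eps := \p_x u_\eps$ that obeys a maximum principle, and then compare $p_\eps$ with the affine-in-time barrier $t \mapsto \|\p_x u_\eps^0\|_{L^\infty} + L_r t$ (and with its negative).

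First I would differentiate \eqref{eq:AF_HJe}. Writing $D_\eps(t,x,z) := \eps^{-1}\big(u_\eps(t,x-\eps z)-u_\eps(t,x)\big)$ and noting that the factors $B(y)q_\eps(t,y)$ and $\alpha(z,y)$ carry no $x$-dependence, only $e^{D_\eps}$ is differentiated, so
\begin{equation*}
\p_t p_\eps(t,x) = \int_{\R} B(y) q_\eps(t,y)\int_{\R}\alpha(z,y)\, e^{D_\eps(t,x,z)}\,\frac{p_\eps(t,x-\eps z)-p_\eps(t,x)}{\eps}\,dz\,dy \;-\; \p_x r_\eps(t,x).
\end{equation*}
The structural point that makes the AF case genuinely easier than the ATH case treated above is that no term involving a derivative of the coefficients appears: there is no analogue of the $\p_x k_\eps$ term, hence no auxiliary function $w_\eps^\lambda = p_\eps + \lambda u_\eps$ is needed. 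The only inhomogeneity is $-\p_x r_\eps$, and $|\p_x r_\eps(t,x)| = |\p_x R(x,\rho_\eps(t))| \le L_r$ by \eqref{Hyp:R}.

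Next I would exploit that the kernel $B(y)q_\eps(t,y)\alpha(z,y)e^{D_\eps(t,x,z)}$ is nonnegative, so the nonlocal operator acting on $p_\eps$ satisfies a maximum principle: at any $(t_0,x_0)$ where $x\mapsto p_\eps(t_0,x)$ is maximal one has $p_\eps(t_0,x_0-\eps z)-p_\eps(t_0,x_0)\le 0$ for all $z$, hence the integral term is $\le 0$ there. To turn this into the bound, fix $\delta>0$, set $\overline p_\delta(t):=\|\p_x u_\eps^0\|_{L^\infty}+(L_r+\delta)t+\delta$, and argue by contradiction that $p_\eps(t,x)<\overline p_\delta(t)$ everywhere: at a first contact time $t_0$ and contact point $x_0$ one gets $\p_t p_\eps(t_0,x_0)\le -\p_x r_\eps(t_0,x_0)\le L_r<L_r+\delta=\dot{\overline p}_\delta(t_0)$, contradicting the crossing, while at $t=0$ the inequality holds strictly since $p_\eps(0,\cdot)=\p_x u_\eps^0$. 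Since $u_\eps=\eps\ln n_\eps$ is a smooth solution (because $n_\eps>0$) and $u_\eps(t,x)\le -A|x|+C_2(1+t)\to-\infty$ as $|x|\to\infty$ by the a priori bounds already established, the contact analysis is legitimate; the only delicate point is that $p_\eps$ itself need not attain its spatial supremum, which is handled in the usual way by first working with $p_\eps(t,x)-\mu\langle x\rangle$, $\langle x\rangle:=\sqrt{1+x^2}$, estimating the penalization error $\mu\big(\langle x_0\rangle-\langle x_0-\eps z\rangle\big)=O(\mu\eps|z|)$ against the (integrable, since $n_\eps$ smooth with $u_\eps$ decaying) kernel, and then letting $\mu\to0$ and $\delta\to0$. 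The same argument applied to $-p_\eps$ gives the lower bound, whence $|p_\eps(t,x)|\le\|\p_x u_\eps^0\|_{L^\infty}+L_r t$, which by \eqref{initial_u} is $\le L_0+L_r T$ on $[0,T]$, independently of $\eps$ — this is exactly local-in-time Lipschitz regularity of $u_\eps$ in space, uniform in $\eps$.

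I expect the only real technical obstacle to be the non-compactness in $x$ when passing from the pointwise maximum-principle statement to a global bound: the factor $e^{D_\eps}$ is a priori only controlled once a Lipschitz bound is known, so a crude penalization is mildly circular. This is resolved either by running the comparison on problems truncated in $x$ with dominating barriers, or by a short bootstrap: $D_\eps$ enters the error only multiplied by $\mu\eps$, so any finite (possibly $\eps$-dependent) bound on $\|\p_x u_\eps\|_{L^\infty}$, which follows from smoothness of $n_\eps$ together with the decay of $u_\eps$, already suffices to close the estimate before $\mu\to0$. Everything else is the routine comparison-principle bookkeeping already used for the a priori bounds on $u_\eps$.
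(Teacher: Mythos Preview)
Your proposal is correct and follows essentially the same approach as the paper: differentiate \eqref{eq:AF_HJe} in $x$, observe that the resulting nonlocal equation for $p_\eps$ enjoys a maximum principle because the kernel is nonnegative, and compare with the spatially constant barrier $t\mapsto \|\p_x u_\eps^0\|_{L^\infty}+L_r t$. In fact you are more thorough than the paper, which only writes the argument at the formal limit level (the equation \eqref{eq:afHJ} for $p$) and notes that ``the proof for the $\eps$-level problem is similar to the one of the ATH case''; you carry it out at the $\eps$-level and correctly point out that, unlike in the ATH proof, no auxiliary function $w_\eps^\lambda=p_\eps+\lambda u_\eps$ is needed here since the coefficients $B(y)q_\eps(t,y)\alpha(z,y)$ carry no $x$-dependence.
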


We address the limit equation
\begin{equation}\label{eq:afHJ}
\p_t p(t,x)=(-\p_x p(t,x))\int B(y) q(t,y)  \int z \alpha(z,y) e^{-p(t,x)\cdot z}dzdy - \p_x r(t,x),
\end{equation}
and give formal arguments, since the proof for the $\eps$-level problem is similar to the one of the ATH case.
We compute that $w(t):=\| \p_x u_\eps^0 \|_{L^{\infty}} + L_r t$ is a super-solution of \eqref{eq:afHJ}. Since $p(0,x)\leq w(0)$ for all $x\in \R$, we deduce that, from the comparison principle, $u_\eps$ is Lipschitz in space, uniformly in $\eps$ and locally in time.
\subsection{Regularity in time}
In the ATH case, since we proved that $u_\eps$ is uniformly Lipschitz in space locally in time, we can deduce that $\p_t u_\eps$ is locally uniformly bounded.

\begin{lemma}
Let $u_\eps$ be the solution to equation \eqref{eq:ATH_HJe} and let $T>0$ and $\bar r>0$ be fixed. Assume \eqref{Hyp:R} and \eqref{bound:K1}. Then, there exists $C(T,\bar r)>0$ such that, for all $t \in [0,T], x \in B(0,\bar r)$, we have
\begin{equation*}
\vert \p_t u_\eps \vert\leq C(T,\bar r)+ \sup_{0 \leq \rho \leq\rho_M}\| R(\cdot, \rho) \|_{L^{\infty}(B(0,\bar r))}.
\end{equation*}
This implies that $u_\eps$ is Lipschitz in time, uniformly in $\eps$.
\end{lemma}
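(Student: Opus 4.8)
The plan is to read off both a lower and an upper bound for $\p_t u_\eps$ directly from the right-hand side of \eqref{eq:ATH_HJe}, the only nontrivial ingredient being the space-Lipschitz estimate established in the previous lemma (this is where \eqref{Hyp:R} and \eqref{bound:K1} are really used). The lower bound is for free: the coefficient $k_\eps(t,x)=\rho_\eps(t)^{-1}\int_\R K_1(x,y)n_\eps(t,y)\,dy$ is nonnegative, and so is $\int_\R G(z')e^{(u_\eps(t,x-\eps z')-u_\eps(t,x))/\eps}\,dz'$, so $\p_t u_\eps(t,x)\ge -r_\eps(t,x)=-R(x,\rho_\eps(t))$. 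Since $0\le\rho_\eps(t)\le\rho_M$ by Proposition~\ref{rho:upperbound} and $R$ is continuous, for $x\in B(0,\bar r)$ this gives $\p_t u_\eps(t,x)\ge-\sup_{0\le\rho\le\rho_M}\|R(\cdot,\rho)\|_{L^\infty(B(0,\bar r))}$, which already accounts for the $R$-term in the statement.

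For the upper bound, $q_\eps(t,\cdot)$ being a probability density, \eqref{bound:K1} yields $k_\eps(t,x)=\int_\R K_1(x,y)q_\eps(t,y)\,dy\le\bar K$, hence $\p_t u_\eps(t,x)\le\bar K\int_\R G(z')e^{D_\eps(t,x,z')}\,dz'$ with $D_\eps(t,x,z'):=\eps^{-1}\big(u_\eps(t,x-\eps z')-u_\eps(t,x)\big)$. Writing the increment as $-\eps z'\int_0^1\p_x u_\eps(t,x-s\eps z')\,ds$ and inserting the space-Lipschitz bound $|\p_x u_\eps(t,y)|\le L_T(1+|y|)$ (uniform in $\eps\le1$ for $t\le T$) gives $D_\eps(t,x,z')\le L_T|z'|\,(1+|x|+\eps|z'|)$, so that for $t\le T$, $x\in B(0,\bar r)$ and $\eps\le1$ one gets $D_\eps(t,x,z')\le a(T,\bar r)|z'|+b(T)|z'|^2$ with explicit constants. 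Then $\int_\R G(z')e^{D_\eps}\,dz'\le\int_\R G(z')e^{a(T,\bar r)|z'|+b(T)|z'|^2}\,dz'=:M(T,\bar r)$, and combining with the lower bound yields $|\p_t u_\eps(t,x)|\le C(T,\bar r)+\sup_{0\le\rho\le\rho_M}\|R(\cdot,\rho)\|_{L^\infty(B(0,\bar r))}$ with $C(T,\bar r):=\bar K\,M(T,\bar r)$; uniform-in-$\eps$ Lipschitz continuity in time is then immediate.

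The one point requiring care is the finiteness of $M(T,\bar r)$, i.e. the integrability of $G(z')e^{D_\eps(t,x,z')}$ as $|z'|\to\infty$. The quadratic term $b(T)|z'|^2$ in the exponent is an artifact of the space-Lipschitz constant growing linearly in $|x|$: the increment then contributes a term of order $\eps|z'|^2$, which becomes $|z'|^2$ after dividing by $\eps$ (using $\eps\le1$). This is harmless when $G$ is Gaussian of small enough variance, or compactly supported; in general one splits $\int_{|z'|\le1/\eps}+\int_{|z'|>1/\eps}$, bounding the near part by the local Lipschitz estimate and the far part by the a priori bound $D_\eps\le-A|z'|+C(T,\bar r)/\eps$ from the previous subsection against the decay of $G$ --- which is precisely where the standing requirement that $\mathcal{L}[G]$ be defined on all of $\R$ is used. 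Apart from this, the argument is routine bookkeeping, and it carries over verbatim to the AF equation \eqref{eq:AF_HJe}, where no quadratic term arises because there the space-Lipschitz bound is uniform in $x$.
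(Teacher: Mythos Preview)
Your argument is correct and follows the same strategy as the paper: bound $k_\eps\le\bar K$, bound $r_\eps$ locally, and control $\int G(z')e^{D_\eps}\,dz'$ by splitting into a near region (handled by the local space-Lipschitz bound) and a far region (handled by the a priori upper and lower bounds on $u_\eps$). The one place you and the paper differ is the choice of split and the far-part bookkeeping. The paper splits in the \emph{physical} variable at a large radius $\bar R>\bar r$, chosen (via the a priori bounds) so that $u_\eps(t,\cdot)<-L_1$ outside $B(0,\bar R)$ while $u_\eps(t,x)>-L_1$ on $B(0,\bar r)$; this forces $D_\eps<0$ on the far part, so that piece is simply bounded by $\int G=1$ with no appeal to the tails of $G$. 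Your split at $|z'|=1/\eps$ gives only $D_\eps\le -A|z'|+C(T,\bar r)/\eps$ on the far part, and you then need the super-exponential decay encoded in ``$\mathcal L[G]$ finite on $\R$'' to absorb the $e^{C/\eps}$ factor (equivalently, observe that $|z'|>1/\eps$ turns $C/\eps$ into $C|z'|$, whence $D_\eps\le (C-A)|z'|$). Both work; the paper's choice of $\bar R$ is the cleaner route. Your side remark that the naive global bound $D_\eps\le a|z'|+b(T)|z'|^2$ is only directly usable for special $G$ is also correct---note in particular that for the unit-variance Gaussian of \eqref{eq:gauss_ath} it fails once $b(T)\ge 1/2$, so the split is genuinely needed even there.
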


\begin{proof}
Let $T>0$ and $\bar R>\bar r>0$ be fixed with $\bar R$ large enough. We choose some constants $L_1$ and $L_2$ such that
\[
u_\eps(t,x) < - L_1, \quad \forall (t,x) \in [0,T] \times \R \backslash B(0,\bar R),
\]
\[
|p_\eps|< L_2, \quad \forall (t,x) \in [0,T] \times B(0,\bar R).
\]
Then, we obtain for $t \in [0,T], x \in B(0,\bar r)$,
\begin{multline*}
\vert \p_t u_\eps \vert \leq \sup_{0 \leq \rho \leq\rho_M}\| R(\cdot, \rho) 
\|_{L^{\infty}(B(0,\bar r))} \phantom{\int}\\
+\frac{1}{\rho_\eps (t)}\int K(x,z) n_\eps(t,z) dz \cdot 
\left(\int_{|x-\eps y|<\bar R} \!\!\!\!e^{-\vert y \vert^2} e^{L_2 y}dy + \int_{|x-\eps 
y|>\bar R} \!\!\!\!e^{-\vert y \vert^2} e^{\frac{u_\eps(t,x- \eps y)-u_\eps(t,x)}{\eps}}dy  
\right).
\end{multline*}
Thus, for $\eps$ small enough, and assuming that 
\[
u_\eps(t,x) > - L_1, \quad \forall t \in [0,T], \forall x \in B(0,\bar r),
\]
\[
u_\eps(t,x) < - L_1, \quad \forall t \in [0,T], \forall x \in \R \backslash B(0,\bar R),
\]
we have
\begin{align*}
\vert \p_t u_\eps \vert &\leq \overline{K} \left(\int_{|x-\eps y|<\bar R} 
\!\!\!\!\!\!\!\!\!\!e^{-\vert y \vert^2} e^{L_2 y}dy + \int_{|x-\eps y|>\bar R} 
\!\!\!\!\!\!\!\!\!\!e^{-\vert y \vert^2} e^{\frac{-L_1-u_\eps(t,x)}{\eps}}dy  
\right)+ \sup_{0 \leq \rho \leq\rho_M}\| R(\cdot, \rho) \|_{L^{\infty}(B(0,\bar r))} 
\\
&\leq \overline{K} \left(\int e^{-\vert y \vert^2} e^{L_2 y}dy + \int_{|x-\eps y|> \bar R} e^{-\vert y \vert^2} dy\right)+ \sup_{0 \leq \rho \leq\rho_M}\| R(\cdot, \rho) \|_{L^{\infty}(B(0,\bar r))} \\
&\leq \overline{K} \left(\int e^{-\vert y \vert^2} e^{L_2 y}dy + \sqrt{\pi}\right)+ \sup_{0 \leq \rho \leq\rho_M}\| R(\cdot, \rho) \|_{L^{\infty}(B(0,\bar r))}.
\end{align*}
Hence the local uniform bound on $\p_t u_\eps$. 
\end{proof}

The proof is similar for the AF case.
\begin{lemma}
Let $u_\eps$ be the solution to equation \eqref{eq:AF_HJe} and let $T>0$ and $r>0$ be fixed. Then, there exists $C(T,r)>0$ such that, for all $t \in [0,T], x \in B(0,r)$, we have
\begin{equation*}
\vert \p_t u_\eps \vert\leq C(T,\bar r)+ \sup_{0 \leq \rho \leq\rho_M}\| R(\cdot, \rho) \|_{L^{\infty}(B(0,\bar r))}.
\end{equation*}
This implies that $u_\eps$ is Lipschitz in time, uniformly in $\eps$.
\end{lemma}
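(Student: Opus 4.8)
The plan is to reproduce, in a streamlined form, the time-regularity argument just given for the ATH case. The point that simplifies matters is that, for equation~\eqref{eq:AF_HJe}, the space-regularity estimate obtained above is \emph{globally} uniform in $x$ --- it reads $|\p_x u_\eps(t,x)| \le \|\p_x u^0_\eps\|_{L^{\infty}} + L_r t$, with no growth in $|x|$ --- so I will not need the truncation radius $\bar R$ that appeared in the ATH proof; the whole estimate reduces to bounding the right-hand side of~\eqref{eq:AF_HJe} directly.

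First I would fix $T>0$ and $r>0$ and set $L_2 := \sup_{\eps}\|\p_x u^0_\eps\|_{L^{\infty}} + L_r T$, which by~\eqref{initial_u}, \eqref{Hyp:R} and the space-regularity lemma controls $|\p_x u_\eps(t,x)|$ for all $t\in[0,T]$, $x\in\R$, uniformly in $\eps$; integrating along the segment from $x$ to $x-\eps z$ this gives $\frac{u_\eps(t,x-\eps z)-u_\eps(t,x)}{\eps} \le L_2|z|$ for every $z\in\R$. The lower bound on $\p_t u_\eps$ is then immediate: the birth term in~\eqref{eq:AF_HJe} is nonnegative (since $B\ge 0$, $\alpha\ge 0$), so $\p_t u_\eps(t,x) \ge -r_\eps(t,x) = -R(x,\rho_\eps(t)) \ge -\sup_{0\le\rho\le\rho_M}\|R(\cdot,\rho)\|_{L^{\infty}(B(0,r))}$ for $x\in B(0,r)$, using $0\le\rho_\eps\le\rho_M$ from Proposition~\ref{rho:upperbound}. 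For the upper bound I would insert the gradient estimate into~\eqref{eq:AF_HJe} and use that $q_\eps(t,\cdot)=n_\eps(t,\cdot)/\rho_\eps(t)$ is a probability density, so that
\[
\p_t u_\eps(t,x) \le \int_{\R} B(y)\, q_\eps(t,y) \int_{\R}\alpha(z,y)\, e^{L_2|z|}\,dz\,dy \le \|B\|_{L^{\infty}}\,\sup_{y\in\R}\int_{\R}\alpha(z,y)\,e^{L_2|z|}\,dz =: C(T).
\]
Combining the two bounds yields $|\p_t u_\eps(t,x)| \le C(T) + \sup_{0\le\rho\le\rho_M}\|R(\cdot,\rho)\|_{L^{\infty}(B(0,r))}$ on $[0,T]\times B(0,r)$, uniformly in $\eps$, which is the claimed estimate; since $n_\eps>0$ so that $u_\eps=\eps\ln n_\eps$ is $C^1$, this locally uniform bound on $\p_t u_\eps$ is exactly what is meant by ``$u_\eps$ is Lipschitz in time, uniformly in $\eps$'', and together with the space-regularity lemma it upgrades to local Lipschitz regularity in $(t,x)$ uniform in $\eps$.

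There is no real obstacle here beyond bookkeeping. The only point that must be checked is the exponential integrability $\sup_{y}\int_{\R}\alpha(z,y)\,e^{L_2|z|}\,dz<\infty$ needed to make $C(T)$ finite; this is the same structural assumption on $\alpha$ already in force (it is what makes the Laplace transform $\mathcal{L}[\alpha(\cdot,y)]$ in~\eqref{laplace} and the limit equation~\eqref{AF_HJ} meaningful, and it was implicitly used in the a priori upper bound through the constant $\sup_y B(y)\int\alpha(z,y)\,e^{A|z|}\,dz$). I would also make sure to record that the space-regularity constant $L_2$ is genuinely $\eps$-independent on all of $[0,T]\times\R$ --- which it is, by the preceding lemma --- since it is precisely this global-in-$x$ control that spares us the $\bar R$-truncation of the $z$-integral required in the ATH argument.
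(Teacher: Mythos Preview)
Your proof is correct and in the same spirit as the paper's (the paper itself offers no explicit argument for this lemma beyond ``the proof is similar for the AF case''). Your observation that the space-regularity bound $|\p_x u_\eps|\le \|\p_x u^0_\eps\|_{L^\infty}+L_r t$ is \emph{global} in $x$ for \eqref{eq:AF_HJe}, and hence that the $\bar R$-truncation and the splitting of the $z$-integral used in the ATH proof are simply unnecessary here, is a genuine and welcome simplification over a literal transcription of that argument. The one caveat you already flag --- finiteness of $\sup_y\int\alpha(z,y)e^{L_2|z|}\,dz$ for the relevant $L_2$ --- is indeed the implicit structural hypothesis on $\alpha$ already used in the a~priori bounds, so nothing new is being assumed.
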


\subsection{A more precise upper bound}
The following argument concerns both cases and gives a sharper upper bound on~$u_\eps$.

\begin{lemma}
Let $u_\eps$ be the solution to equation \eqref{eq:AF_HJe} or \eqref{eq:ATH_HJe}. Then, for all $x,y \in \R$, we have
\begin{equation*}
u_\eps (t, x) \leq \eps \ln \big(\rho_M m_{x, \frac{C (1 + t)}{\eps}} \big),
\end{equation*}
where $m_{x, A} > 0$ is the minimum on $\R$ of $g_{x, A} : y \mapsto A \frac{1 + \max(\lvert x \rvert, \lvert y \rvert)}{1 - e^{-\lvert y-x \rvert A (1 + \max(\lvert x \rvert, \lvert y \rvert))}}$. 

In addition, if $A > 0$ we have $A < m_{x, A} \leq A + 3/2$. Thus, we obtain the global upper bound
\[
 u_\eps (t, x) \leq \eps \ln \big(  \rho_M (3/2 + C (1+t) / \eps) \big) \xrightarrow[]{\eps \to 0} 0.
\]
\end{lemma}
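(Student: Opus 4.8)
The idea is the classical one for the Hamilton--Jacobi approach: trade the uniform spatial regularity of $u_\eps$ against the uniform bound on the total mass. Recall from Proposition~\ref{rho:upperbound} that $\int_\R n_\eps(t,y)\,dy=\rho_\eps(t)\le\rho_M$, and from the space-regularity estimates established above that there is a constant $C>0$, independent of $\eps$, with
\[
 |\p_x u_\eps(t,z)|\le C(1+t)(1+|z|)\qquad\text{for all }t\ge 0,\ z\in\R
\]
(in the AF case the gradient bound is in fact uniform in $z$, so the factor $1+|z|$ is only needed in the ATH case; keeping it lets us treat both at once). Integrating along the segment between $x$ and $z$ and bounding $|w|\le\max(|x|,|z|)$ on that segment gives, for all $x,z\in\R$,
\[
 u_\eps(t,z)\ \ge\ u_\eps(t,x)-C(1+t)\bigl(1+\max(|x|,|z|)\bigr)\,|x-z|.
\]

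Now fix $x,y\in\R$, let $I_{x,y}$ be the interval with endpoints $x,y$, and set $A:=C(1+t)/\eps$ and $a:=A\bigl(1+\max(|x|,|y|)\bigr)$. For $z\in I_{x,y}$ one has $|z|\le\max(|x|,|y|)$ and $|x-z|\le|x-y|$, so, since $n_\eps=e^{u_\eps/\eps}$,
\[
 n_\eps(t,z)\ \ge\ e^{u_\eps(t,x)/\eps}\,e^{-a|x-z|}.
\]
Integrating over $I_{x,y}$ and using the mass bound,
\[
 \rho_M\ \ge\ \int_{I_{x,y}}n_\eps(t,z)\,dz\ \ge\ e^{u_\eps(t,x)/\eps}\int_0^{|x-y|}e^{-ar}\,dr\ =\ e^{u_\eps(t,x)/\eps}\,\frac{1-e^{-a|x-y|}}{a}.
\]
Rearranging yields $e^{u_\eps(t,x)/\eps}\le\rho_M\,\dfrac{a}{1-e^{-a|x-y|}}=\rho_M\,g_{x,A}(y)$, i.e. $u_\eps(t,x)\le\eps\ln\bigl(\rho_M\,g_{x,A}(y)\bigr)$; taking the minimum over $y$ gives the first assertion $u_\eps(t,x)\le\eps\ln(\rho_M\,m_{x,A})$.

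It remains to estimate $m_{x,A}$. For the lower bound, note $g_{x,A}(y)=a/(1-e^{-a|x-y|})>a\ge A$ for every $y\ne x$, while $g_{x,A}(y)\to+\infty$ both as $y\to x$ and as $|y|\to\infty$, so the minimum is attained at some $y^\ast\ne x$ and $m_{x,A}=g_{x,A}(y^\ast)>A$. For the upper bound one exhibits an admissible test point: choosing $|x-y|$ of order $1/\bigl(A(1+|x|)\bigr)$, so that the exponent $a|x-y|$ is of order one while $\max(|x|,|y|)$ stays comparable to $|x|$, and estimating the elementary function $r\mapsto r/(1-e^{-r})$, one obtains $g_{x,A}(y)\le A+3/2$; for the range of $x$ where this is not efficient one falls back on the a priori linear upper bound $u_\eps(t,x)\le -A|x|+C_2(1+t)$ proved in the first lemma of the appendix, which is then sharper. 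Substituting $A=C(1+t)/\eps$ and using monotonicity of $\ln$ gives $u_\eps(t,x)\le\eps\ln\!\bigl(\rho_M(3/2+C(1+t)/\eps)\bigr)$, and since $\eps\ln(1/\eps)\to 0$ this tends to $0$ as $\eps\to 0$. The conceptual content is entirely in the two-line argument above; the only delicate point is the scalar optimization producing the constant $3/2$ together with the correct propagation of the position-dependent Lipschitz constant $C(1+t)(1+|z|)$ into the factor $1+\max(|x|,|y|)$ in the ATH case.
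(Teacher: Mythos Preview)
Your main argument is the paper's: combine the space–Lipschitz bound $|\p_x u_\eps(t,\cdot)|\le C(1+t)(1+|\cdot|)$ with the mass constraint $\int n_\eps\le\rho_M$ by integrating $n_\eps=e^{u_\eps/\eps}$ over the segment $I_{x,y}$. Your derivation of $u_\eps(t,x)\le\eps\ln(\rho_M g_{x,A}(y))$ is correct and matches the paper line for line, and the lower bound $m_{x,A}>A$ is fine.

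The gap is the upper bound $m_{x,A}\le A+3/2$. With your test point $|x-y|\sim 1/(A(1+|x|))$ and $|y|$ comparable to $|x|$, the exponent $a|x-y|$ is indeed $O(1)$, but then
\[
g_{x,A}(y)=\frac{a}{1-e^{-a|x-y|}}\approx \frac{A(1+|x|)}{1-e^{-1}}\approx 1.58\,A(1+|x|),
\]
which is \emph{not} $\le A+3/2$ once $|x|$ is not small; so your sketch does not deliver the stated constant. The paper is no better at this step: it tests only the single location $x=1/A$, $y=-1/A$, and its displayed exponent $e^{-2(1+A)}$ should read $e^{-2(1+1/A)}$; with the correct exponent one gets $g_{1/A,A}(-1/A)=(A+1)/(1-e^{-2(1+1/A)})$, which already exceeds $A+3/2$ for, say, $A=10$. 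In fact a short optimization shows $m_{x,A}$ behaves like $A(1+|x|)$ for large $A$, so the uniform-in-$x$ bound $A+3/2$ asserted in the lemma does not hold as written. Your instinct to fall back on the a priori bound $u_\eps(t,x)\le -A_0|x|+C_2(1+t)$ from the first lemma of the appendix (note that this $A_0$ is a fixed constant, distinct from the $A=C(1+t)/\eps$ of the present lemma) is the right way to rescue the \emph{final} conclusion: that bound already gives $u_\eps\le 0$ for $|x|\ge C_2(1+t)/A_0$, and on the remaining bounded range the crude estimate $m_{x,A}\le C'A(1+t)$ still yields $u_\eps\le\eps\ln(C''(1+t)^2/\eps)\to 0$. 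But this needs to be carried out explicitly, and it gives a slightly different global bound from the one displayed in the statement.
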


\begin{proof}
For all $z \in (x, y)$, by the mean value theorem there exists $\theta_{\eps} (t, x, z)$ between $x$ and $y$ such that
\[
    u_\eps (t, z) = u_\eps (t, x) + (z - x) \p_x u_\eps (t, \theta_\eps (t, x, z)).
\]
In addition, by the previous point there exists $C$ (independent of $t, x$ and $\eps$) such that for all $t, x$, $\lvert \p_x u_\eps (t, x) \rvert \leq C (1 + t) (1 + \lvert x \rvert)$.
Hence
\[
u_\eps (t,z) \geq u_\eps (t, x) - (z - x) C (1 + t) \big(1 + \max(\lvert x \rvert, \lvert y \rvert) \big).
\]
Since we have, for $x < y$,
\[
\int_x^y e^{\frac{u_\eps (t, z)}{\eps}} dz \leq \rho_M,
\]
we deduce that
\[
\eps e^{\frac{u_\eps(t,x)}{\eps}} \frac{1 - e^{-(y - x) \frac{C (1 + t) (1 + \max(\lvert x \rvert, \lvert y \rvert) )}{\eps}}}{C (1 + t) \big(1 + \max(\lvert x \rvert, \lvert y \rvert) \big)}\leq \rho_M, \quad \forall y.
\]
Then, we compute
\[
    u_\eps (t, x) \leq \eps \ln \Big( \frac{\rho_M C (1 + t) \big(1 + \max(\lvert x \rvert, \lvert y \rvert) \big)}{\eps \big(1 - e^{-(y-x) \frac{C (1 + t) (1 + \max(\lvert x \rvert, \lvert y \rvert) )}{\eps}} \big)} \Big),
\]
and this holds for all $y > x$.
We can also choose $y < x$ and get in more generality
\begin{equation*}
    u_\eps (t, x) \leq \eps \ln \Big( \frac{\rho_M C (1 + t) \big(1 + \max(\lvert x \rvert, \lvert y \rvert) \big)}{\eps \big(1 - e^{- \lvert y-x \rvert \frac{C (1 + t) (1 + \max(\lvert x \rvert, \lvert y \rvert) )}{\eps}} \big)} \Big)=\eps \ln \big(\rho_M g_{x, \frac{C (1 + t)}{\eps}} (y)\big).
\end{equation*}
Observe that $g_{x,A}$ is positive and goes to $+\infty$ at $y=\pm \infty$ and at $y =x$. Minimizing in $y$, we find that
\[
u_\eps (t, x) \leq \eps \ln \big(\rho_M m_{x, \frac{C (1 + t)}{\eps}} \big).
\]
To conclude we first remark that if $A > 0$ and $x, y \in \R$, then we have
\[
 \frac{1+\max(\lvert x \rvert, \lvert y \rvert)}{1-e^{-\lvert y - x \rvert A (1 + \max(\lvert x \rvert, \lvert y \rvert)}} > 1,
\]
so $g_{x, A} (y) > A$ for all $y \in \R$ and thus $m_{x, A} > A$. Then, with $A > 0$ we also have
\begin{align*}
 g_{1/A,A} (-1/A) = \frac{A+1}{1 -e^{-2 (1 + A)}} \leq A + 3/2,
\end{align*}
which implies $m_{x, A} \leq A + 3/2$. Thus, we obtain the global upper bound
\[
 u_\eps (t, x) \leq \eps \ln \big(  \rho_M (3/2 + C (1+t) / \eps) \big) \xrightarrow[]{\eps \to 0} 0.
 \]
\end{proof}
The proof of Theorem \ref{th:HJ} is achieved.

\end{appendix}

\bibliographystyle{abbrv}
\bibliography{biblio.bib}

\end{document}